\numberwithin{equation}{section}
\newcommand{\Z}{\mathbb Z}
\newcommand{\Q}{\mathbb Q}
\newcommand{\WH}{\widehat}
\newcommand{\loc}{\mathrm{loc}}
\newcommand{\QQ}{\mathbb{Q}}
\newcommand{\FF}{\mathbb{F}}
\newcommand{\Qp}{\mathbb{Q}_p}
\newcommand{\Zp}{\mathbb{Z}_p}
\newcommand{\Zl}{\mathbb{Z}_l}
\newcommand{\zp}{\Zp}
\newcommand{\GL}{\mathrm{GL}}
\newcommand{\Hom}{\mathrm{Hom}}
\newcommand{\sel}{\mathrm{Sel}}
\newcommand{\gal}{\mathrm{Gal}}
\newcommand{\coker}{\mathrm{Coker}}
\newcommand{\Img}{\mathrm{Im}}
\newcommand{\cyc}{\mathrm{cyc}}
\newcommand{\fcyc}{F_{\cyc}}
\newcommand{\lag}{\Lambda(\Gamma)}
\newcommand{\ilim}{\varinjlim}
\newcommand{\cO}{\mathcal{O}}
\newcommand{\cL}{\mathcal{L}}
\newcommand{\sss}{S_p^{\mathrm ss}}
\newcommand{\ssF}{S_{p,F}^{\mathrm{ss}}}
\newcommand{\Tr}{\mathrm{Tr}}
\newcommand{\rank}{\mathrm{rank}}
\newcommand{\Ep}{E_{p^\infty}}
\newcommand{\HIw}{H^1_{\mathrm{Iw}}}
\newcommand{\Finf}{F_\infty}
\newcommand{\Gal}{\mathrm{Gal}}
\newcommand{\ur}{\mathrm{ur}}
\newcommand\cyr{%
\renewcommand\rmdefault{wncyr}%
\renewcommand\sfdefault{wncyss}%
\renewcommand\encodingdefault{OT2}%
\normalfont\selectfont}
\DeclareTextFontCommand{\textcyr}{\cyr}
\newcommand{\Sha}{{\mbox{\textcyr{Sh}}}}
\newtheorem{theorem}{Theorem}[section]
\newtheorem{proposition}[theorem]{Proposition}
\newtheorem{lemma}[theorem]{Lemma}
\newtheorem{corollary}[theorem]{Corollary}
\newtheorem{remark}[theorem]{Remark}
\definecolor{Green}{rgb}{0.0, 0.5, 0.0}
\newcommand{\Keywords}[1]{\par\noindent
{\small{Keywords and phrases}: #1}}
\newcommand{\AMS}[1]{\par\noindent
{\small{AMS Subject Classification}: #1}}
\author[A. Lei]{Antonio Lei}
\address{(A. Lei) D\'epartement de Math\'ematiques et de Statistiques, Universit\'e Laval, Pavillon Alexandre-Vachon, 1045 Avenue de la M\'edecine, Qu\'ebec, QC, Canada G1V 0A6}
\email{antonio.lei@mat.ulaval.ca}
\author[R. Sujatha]{R. Sujatha}
\address{(R. Sujatha) Mathematics Department, 1984, Mathematics Road, University of British Columbia,  Vancouver, 
Canada V6T 1Z2}
\email{sujatha@math.ubc.ca}
\begin{document}

\title{On Selmer groups in the supersingular reduction case}

\begin{abstract}
Let $p$ be a fixed odd prime. Let $E$ be an elliptic curve defined over a number field $F$ with good supersingular reduction at all primes above $p$. We study both the classical and plus/minus Selmer groups over the cyclotomic $\Zp$-extension of $F$. In particular, we give sufficient conditions for these Selmer groups to not contain a non-trivial sub-module of finite index. Furthermore, when $p$ splits completely in $F$, we  calculate the Euler characteristics of the plus/minus Selmer groups over the compositum of all $\Zp$-extensions of $F$ when they are defined. 
\end{abstract}

\maketitle
\let\thefootnote\relax\footnotetext{
\AMS{11R23, 11G05, 11F11}
\Keywords{Elliptic curves, modular forms, supersingular primes, Selmer groups}
}

\section{Introduction}\label{intro}
The theory of Galois representations for the absolute Galois group of a number field $F$ plays a central role in  Arithmetic Geometry.
Two important examples that  have been studied in depth are the Galois representations associated to an elliptic curve $E$ defined over 
$F$ (see \cite{serre}) and those arising from elliptic modular forms that originated in the results due to Eichler, Shimura,
Deligne and Serre (see \cite{D}).  There is a well-defined notion of these Galois representations being ordinary at a prime number $p$ (see  \cite{G}), which for simplicity we assume to be odd in this article. Iwasawa theory then intervenes effectively  thereby enabling a deeper study that provides arithmetic information associated to the Galois representations. The important objects of study in this situation are the Selmer groups associated to the Galois representations when they have good ordinary reduction at an odd prime $p$.  A central conjecture due to Mazur {\cite{m}} in the case of $p$-ordinary elliptic curves, states that the  dual Selmer group of the elliptic curve over the cyclotomic  $\Zp$-extension  is finitely generated  and torsion over the corresponding Iwasawa algebra. A generalised form of this conjecture is believed to be true for the Selmer groups of the Galois representation associated to modular forms  as well, when the representation is ordinary at $p$ {(which is equivalent to the $p$-th Fourier coefficient of the modular form being a $p$-adic unit). Kato showed in \cite{kato} that this conjecture holds  when $F/\QQ$ is an abelian extension.} Extensions of  Mazur's conjecture on the dual Selmer group being torsion over the cyclotomic extension  
have been studied over other Galois extensions $F_{\infty}$  of  $F$ whose Galois group  $\Gal(F_{\infty}/F)$ is a $p$-adic Lie group.
The main tools in these cases are Iwasawa theoretic in nature. It is known  that Mazur's conjecture on the dual Selmer group being a torsion module over the corresponding Iwasawa algebra is equivalent to the defining sequence for the 
Selmer group being short exact {(see \cite[Proposition~2.3]{H-M})}.  This equivalence can also be generalised to the Iwasawa theoretic study over other $p$-adic Lie extensions. This in turn enables us to study the Galois cohomology of the Selmer group over the corresponding Iwasawa algebra (see  {\cite{css}}) as well as the Euler characteristic of the Selmer group, when it is defined (see \cite{C-S}). 

We now turn to the case when the elliptic curve has supersingular reduction at all the primes of $F$ above $p$ and consider the Selmer group over the cyclotomic $\Zp$-extension  $F_{\cyc}$  of $F$.  The dual Selmer group over the cyclotomic extension, which is still finitely generated over the corresponding Iwasawa algebra  is then far from being torsion, and is conjectured to have positive rank ({\cite[Conjecture~2.5]{C-S}}). 
Nevertheless, it is pertinent to ask whether the corresponding  global to local map in the defining sequence for the Selmer group is exact.
All these can be suitably extended to the Galois representation arising from elliptic modular forms, and these questions are investigated in this paper. In the last couple of decades, there are important subgroups of the Selmer groups, which we simply refer to as the  `signed Selmer groups'  that have been defined by  Kobayashi in \cite{Ko} and later studied by various authors notably by Kim, Kitajima, Otsuki et al under the assumption that $a_p(E)=0$ (see \cite{Kim, Ki,K-O,ip,L,LP,L-Z}). These Selmer groups exhibit properties remarkably similar to the Selmer groups in the ordinary case and results on  their Galois cohomology and  Euler characteristic  are known ({\cite[Theorem~1.2]{Ki}}).   When $a_p(E)\ne0$, Sprung has  defined and extensively studied similar Selmer groups  in \cite{Sp1,Sp2}. In the present paper, we concentrate on the  $a_p(E)=0$ case, whenever we study  the signed Selmer groups, for simplicity.

Our main results in this paper  study the Selmer groups (see Theorem~\ref{thm:notorsion}) and the signed Selmer groups in the supersingular case (see Theorem~\ref{thm:pmSel})  along the lines  undertaken in \cite{C-S} for the ordinary case. Surprisingly, this lens affords  simpler proofs of known results and makes the Galois cohomological study of the Selmer groups more explicit. Additionally, we  consider the $p$-adic Lie extension $F_{\infty}$  defined as the compositum of all $\Zp$-extensions of $F$ {under the assumption that $p$ splits completely in $F$}.  Our techniques make it possible to study the Galois cohomology of the  signed Selmer groups over this extension and compute their Euler characteristics (see Theorem~\ref{thm:ES} below) in the case of 
elliptic curves. Thus this paper is largely a generalisation of  the work in \cite{C-S}  from the case of ordinary reduction to that of supersingular reduction. Apart from studying the analogues to the representations associated to modular forms (see Theorem~\ref{thm:MF}), our methods provide different and simpler proofs in the supersingular case, as well as the computation of the Euler characteristic over a larger $p$-adic Lie extension in a unified framework.

The paper consists of five sections including this introductory section. In section 2, we introduce and study the Galois cohomology of the Selmer groups  of elliptic curves in the supersingular case.  In section 3,  we frame an analogous study for the representations arising from elliptic modular forms and prove results in that context.  In section 4, we introduce the signed Selmer groups in the case of  elliptic curves under the assumption that $p$ is unramified in $F$
and study their Galois cohomology. In all three instances, we study the Selmer groups over $\fcyc$ and  give sufficient conditions for the Selmer groups to not contain a non-trivial sub-module of finite index. In section 5, we assume that $p$ splits completely in $F$ and consider the larger $p$-adic Lie extension  $F_{\infty}$ obtained as the compositum of all $\Zp$-extensions of $F$. We compute the Euler characteristic of the signed Selmer group over this extension when they are defined. The reason that we restrict our study to the case of elliptic curves in sections 4 and 5  is that the theory of signed Selmer groups for modular forms in the non-ordinary case are not very well understood.  We hope that our approach might be helpful towards understanding how the results can be extended to  modular forms and hope to return to this subject in a subsequent paper.

Since the present paper was submitted, we learnt that Ahmed and Lim  \cite{AL} independently studied the Euler characteristics of mixed-signed Selmer groups of an elliptic curve. They concentrated on signed Selmer groups over the cyclotomic $\Zp$-extension of a number field. It would be interesting to investigate whether one may combine our work with theirs to study mixed-signed Selmer groups of elliptic curves  over the compositum of all $\Zp$-extensions of a number field.

\section{Galois cohomology of the Selmer group}\label{sec:selcyc}

In this section, we study the Selmer group of an elliptic curve and the Galois cohomology of the Selmer group. Let $E$ be an elliptic curve defined over a number field $F$. It  is assumed throughout that $E$ has supersingular reduction at all the primes of $F$ lying above the odd prime $p$.  Let $S$ be the finite set of primes of $F$ lying over  $p$ and the set of primes where $E$ has bad reduction. Let $F_S$ denote the maximal extension of $F$ unramified outside $S$.  For any finite extension $L$  of $F$, and a prime $v$ of $L$, the groups $J_v(E/L)$ are defined as follows:
\begin{equation}\label{jv}
J_v(E/L) = \bigoplus_{w\mid v} \,  H^1(L_w, E)(p)
\end{equation}
where the direct sum is taken over all primes $w$ of $L$ lying above the prime $v$ of $F$. For an infinite extension ${\mathcal L}$ of $F$, we define
\begin{equation}\label{jvl}
J_v(E/{\mathcal L}) = \ilim\, J_v(E/L),
\end{equation}
where the inductive limit is taken over finite extensions $L$ of $F$ inside ${\mathcal L}$.
The ($p$-adic) Selmer group of $E$ over a finite extension $L$ denoted $\sel_p(E/L)$ is defined 
as the kernel of the natural map
\begin{equation}\label{sel}
 \lambda_L : H^1(F_S/L,E_{p^\infty})  \longrightarrow \underset{v\in S}{\bigoplus} J_v(E/L).
\end{equation}
 For an infinite extension ${\mathcal L}$ of $F$, the Selmer group $\sel_p (E/{\mathcal L})$ is 
 defined by taking the direct limits of $\sel_p(E/L)$  as $L$ varies over finite extensions  $L$ in ${\mathcal L}.$
 If ${\mathcal L}$ is contained in $F_S$, there is  an exact sequence
 \begin{equation}\label{selL}
 0 \to \sel_p (E/{\mathcal L}) \to H^1(F_S/{\mathcal L}, E_{p^{\infty}})  \overset{\lambda_{\mathcal L}}{\longrightarrow}  \underset{v\in S}{\bigoplus}
 J_v(E/{\mathcal L}).
 \end{equation}

Let us first consider the case when ${\mathcal L}$ is the cyclotomic  $\zp$-extension $\fcyc$ of $F$, with $\Gamma :=\gal(\fcyc/F).$  Denote the map $\lambda_{\fcyc}$ occurring in  \eqref{selL} by $\lambda_{\cyc}$. Let $X(E/\fcyc)$ be the Pontryagin dual of $\sel_p(E/\fcyc)$. Then $X(E/\fcyc)$  is a finitely generated  module over the Iwasawa algebra $\lag:=\Zp[[\Gamma]]$. When $E$ has good ordinary reduction at all the primes of  $F$ above $p$, under the hypothesis that $\sel_p(E/F)$ is finite, the  $\Gamma$-Euler characteristic  of $\sel_p(E/\fcyc)$ has been studied in the context of its relation to  the exact formula for the $L$-value predicted by the Birch and Swinnerton-Dyer conjecture.   Further, $X(E/\fcyc)$ being $\lag$-torsion  is equivalent to the surjectivity of the map $\lambda_{\cyc}$ 
occurring in \eqref{selL}. Another important  result that plays a key role  in computing the Euler characteristic is the vanishing of  $H_1(\Gamma, X(E/F_{\cyc}))$  under certain additional hypothesis (see \cite[Theorem 3.11]{C-S}). 

The big difference in the case when $E$ has supersingular reduction at the primes of $F$ above $p$ is that $X(E/\fcyc)$ is  no longer torsion as a $\lag$-module, even though it is finitely generated. Nevertheless,  in this section, we prove 
that $H_1(\Gamma, X(E/\fcyc))$ vanishes under the assumption that the map $\lambda_{\cyc}$ is surjective and 
\begin{equation}\label{WLC}
H^2(F_S/F, E_{p^{\infty}})=0,
\end{equation}
which can be considered as the analogue of the   Leopoldt Conjecture on number fields for $E_{p^\infty}$ over $F$. Furthermore, this is equivalent to the finiteness of the fine Selmer group of $E$ over $F$ as defined in \cite{fine} (see \cite[Lemma~3.2]{H}; we thank Meng Fai Lim for reminding us of this fact).

\begin{remark}
The equation \eqref{WLC} holds when both $E(F)$ and $\Sha(E/F)(p)$ are finite, as shown in \cite[Proposition~1.9]{C-S}.
\end{remark}

Our proof surprisingly follows the arguments in \cite{C-S}  for the ordinary reduction case and is rendered simpler in the supersingular case. It involves a detailed analysis of the so called fundamental diagram \eqref{fund} below that is used in proving control theorems in Iwasawa theory. 

\begin{equation}\label{fund}
\begin{CD}
0 @>>>  \sel_p(E/\fcyc)^{\Gamma} @>>> H^1(F_S/\fcyc, E_{p^{\infty}})^{\Gamma} @>{{\lambda}^{\Gamma}_{\cyc}}>> \underset{v\in S}{\bigoplus}  J_v(E/\fcyc)^{\Gamma}\\
& & @A{\alpha}AA                                    @A{\beta}AA                                                       @A{\gamma=\oplus \gamma_v}AA\\
0 @>>> \sel_p(E/F) @>>> H^1(F_S/F, E_{p^{\infty}}) @>{\lambda}>> \underset{v \in S}{\bigoplus} J_v(E/F) .
\end{CD}
\end{equation}

This diagram gives a  natural map 
\begin{equation}\label{delta}
\delta : \ker \gamma \to \coker\,\lambda 
\end{equation}
and analysing this map is important in proving the surjectivity of $\lambda.$ We begin by collecting a few lemmas.

\begin{lemma}\label{lem:beta} In the diagram \eqref{fund} above,  the map $\beta$ is surjective and has finite kernel.
\end{lemma}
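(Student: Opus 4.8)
The plan is to deduce both assertions directly from the Hochschild--Serre (inflation--restriction) exact sequence attached to the extension of profinite groups
\[
1 \to \gal(F_S/\fcyc) \to \gal(F_S/F) \to \Gamma \to 1
\]
acting on the discrete $p$-primary module $E_{p^{\infty}}$ (which is indeed a $\gal(F_S/F)$-module, since its points generate extensions unramified outside $S$). Writing $E_{p^{\infty}}(\fcyc) := E_{p^{\infty}}^{\gal(F_S/\fcyc)}$ for the $p$-power torsion rational over $\fcyc$, this sequence reads
\[
0 \to H^1(\Gamma, E_{p^{\infty}}(\fcyc)) \to H^1(F_S/F, E_{p^{\infty}}) \overset{\beta}{\lra} H^1(F_S/\fcyc, E_{p^{\infty}})^{\Gamma} \to H^2(\Gamma, E_{p^{\infty}}(\fcyc)),
\]
in which $\beta$ is precisely the restriction map appearing in \eqref{fund}. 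It therefore suffices to control the two outer terms.

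For surjectivity, I would invoke that $\Gamma \cong \Zp$ has $p$-cohomological dimension one, so that $H^2(\Gamma, M) = 0$ for every discrete $p$-primary $\Gamma$-module $M$; in particular the right-hand term $H^2(\Gamma, E_{p^{\infty}}(\fcyc))$ vanishes, and exactness forces $\beta$ to be onto.

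For the kernel, one has $\ker\beta \cong H^1(\Gamma, E_{p^{\infty}}(\fcyc))$. Fixing a topological generator $\gamma$ of $\Gamma$, the module $E_{p^{\infty}}(\fcyc)$ is cofinitely generated over $\Zp$, being a subgroup of $E_{p^{\infty}}(\overline{F}) \cong (\Qp/\Zp)^2$, and the two-term complex given by $\gamma - 1$ computes $H^0(\Gamma, -) = \ker(\gamma-1)$ and $H^1(\Gamma, -) = \coker(\gamma-1)$. Dualizing, $\gamma - 1$ becomes an endomorphism of the finitely generated $\Zp$-module $E_{p^{\infty}}(\fcyc)^{\vee}$, whence $\ker(\gamma-1)$ and $\coker(\gamma-1)$ have equal $\Zp$-corank; thus $\corank_{\Zp} H^1(\Gamma, E_{p^{\infty}}(\fcyc)) = \corank_{\Zp} H^0(\Gamma, E_{p^{\infty}}(\fcyc))$. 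Finally $H^0(\Gamma, E_{p^{\infty}}(\fcyc)) = E_{p^{\infty}}(\fcyc)^{\Gamma} = E_{p^{\infty}}(F)$ is finite, as $E(F)$ is finitely generated by the Mordell--Weil theorem. Having corank zero, $\ker\beta$ is therefore finite.

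I do not anticipate a serious obstacle, since the argument is formal once the inflation--restriction sequence is set up; the only point demanding care is the corank-matching step, where it is worth emphasizing that reducing the finiteness of $\ker\beta$ to that of the \emph{arithmetic} group $E_{p^{\infty}}(F)$ is both sufficient and markedly cleaner than trying to prove finiteness of the a priori larger module $E_{p^{\infty}}(\fcyc)$ outright (e.g.\ via an Imai-type theorem). I also note that the supersingular hypothesis is not used in this lemma, which holds for $E$ with arbitrary reduction at the primes above $p$.
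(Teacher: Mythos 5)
Your proof is correct, and for the surjectivity half it is exactly the paper's argument: both run the low-degree Hochschild--Serre sequence for $1\to\gal(F_S/\fcyc)\to\gal(F_S/F)\to\Gamma\to1$ and kill $H^2(\Gamma,E_{p^\infty}(\fcyc))$ using $\mathrm{cd}_p(\Gamma)=1$. Where you genuinely diverge is the finiteness of $\ker\beta\cong H^1(\Gamma,E_{p^\infty}(\fcyc))$: the paper invokes Imai's theorem \cite{I}, which says that $E_{p^\infty}(\fcyc)$ is itself finite, so finiteness of its $H^1$ is immediate; you instead allow $E_{p^\infty}(\fcyc)$ to be infinite and argue by corank counting --- identifying $H^1(\Gamma,M)$ with $M_\Gamma$ for a discrete $p$-primary $\Gamma$-module $M$, noting that on the compact dual the kernel and cokernel of $\gamma-1$ have equal $\Zp$-rank, and concluding $\corank_{\Zp}H^1(\Gamma,M)=\corank_{\Zp}M^\Gamma$ with $M^\Gamma=E_{p^\infty}(F)$ finite by Mordell--Weil. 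Each route buys something: Imai's theorem is the stronger arithmetic input and underlies the paper's subsequent remark that $\beta$ is even an isomorphism when $p$ is unramified in $F$ (since then $E_{p^\infty}(\fcyc)=0$), while your argument is more elementary and more portable --- it uses nothing about the cyclotomic extension beyond $\Gamma\cong\Zp$, so it would apply verbatim over an arbitrary $\Zp$-extension, where finiteness of the torsion can fail. Your closing observation that the supersingular hypothesis plays no role is also accurate. The only cosmetic slip is terminological: on the dual (finitely generated) side, the equality you use is of $\Zp$-\emph{ranks} of $\ker(\gamma-1)$ and $\coker(\gamma-1)$, dual to the equality of coranks upstairs.
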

\begin{proof} The assertion follows from an easy application of the Hochschild-Serre spectral sequence and   
noting that the $p$-cohomological dimension of $\Gamma$ is one. Indeed, the  kernel of $\beta$ is isomorphic 
to $H^1(\Gamma, E_{p^{\infty}}(\fcyc))$ and this group is finite since $E_{p^{\infty}}(\fcyc)$ is finite, by a theorem of Imai \cite{I}.
From the Hochschild-Serre spectral sequence, it follows that the cokernel of $\beta$ is isomorphic to $H^2(\Gamma, E_{p^{\infty}}(\fcyc))$  which is zero as $\Gamma$ has $p$-cohomological dimension one.
\end{proof}

\begin{remark}
When $p$ is unramified over $F$,  then \cite[Proposition~8.7]{Ko} shows that $E_{p^\infty}(F_\cyc)=0$. In particular, $\beta$ is in fact an isomorphism.
\end{remark}

\begin{lemma}\label{kerg}
 In the diagram \eqref{fund} above, the map $\gamma$ is surjective. Furthermore, $\underset{v|p}{\bigoplus}J_v(E/F)\subset \ker\gamma$ and the containment is of finite index.
\end{lemma}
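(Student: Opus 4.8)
The plan is to analyse $\gamma=\oplus_v\gamma_v$ one prime at a time. Fix $v\in S$ and a prime $w$ of $\fcyc$ above $v$, and let $\Gamma_v=\Gal(\fcyc_w/F_v)\subseteq\Gamma$ be the decomposition group; since $\Gamma\cong\Zp$, the group $\Gamma_v$ is either trivial or isomorphic to $\Zp$, and in either case its $p$-cohomological dimension is at most one. Because $\Gamma$ permutes the primes of $\fcyc$ above $v$ transitively with stabiliser $\Gamma_v$, Shapiro's lemma identifies $J_v(E/\fcyc)^{\Gamma}$ with $H^1(\fcyc_w,E)(p)^{\Gamma_v}$, and under this identification $\gamma_v$ becomes the restriction map. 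The inflation--restriction sequence attached to $1\to\Gal(\overline{F_v}/\fcyc_w)\to\Gal(\overline{F_v}/F_v)\to\Gamma_v\to1$ then gives $\ker\gamma_v\cong H^1(\Gamma_v,E(\fcyc_w))(p)$ together with an injection $\coker\gamma_v\hookrightarrow H^2(\Gamma_v,E(\fcyc_w))$.

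Surjectivity of $\gamma$ will follow at once: since $\Gamma_v$ has $p$-cohomological dimension at most one, a standard argument shows that the relevant part of $H^2(\Gamma_v,E(\fcyc_w))$ vanishes, so $\coker\gamma_v=0$ for every $v\in S$ and hence $\gamma=\oplus_v\gamma_v$ is surjective.

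For the statement about the kernel I would treat the primes above $p$ separately. If $v\mid p$, the local extension $\fcyc_w/F_v$ is an infinitely ramified $\Zp$-extension, hence deeply ramified in the sense of Coates and Greenberg. Since $E$ has good supersingular reduction at $v$, its formal group has height two, so the connected--\'etale sequence of $E_{p^\infty}$ over $\cO_{F_v}$ has trivial \'etale quotient. The Coates--Greenberg description of $H^1(\fcyc_w,E)(p)$ in terms of this \'etale quotient therefore forces $J_v(E/\fcyc)=H^1(\fcyc_w,E)(p)=0$. Consequently $\gamma_v=0$ for each $v\mid p$, so the whole of $J_v(E/F)$ lies in $\ker\gamma_v$, which proves $\bigoplus_{v\mid p}J_v(E/F)\subseteq\ker\gamma$. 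I expect this local vanishing to be the main obstacle, as it is precisely where the supersingular hypothesis enters in an essential way.

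Finally, to see that the containment has finite index, observe that $\ker\gamma=\bigoplus_{v\mid p}J_v(E/F)\oplus\bigoplus_{v\nmid p}\ker\gamma_v$, so it suffices to check that $\ker\gamma_v$ is finite for each of the finitely many $v\in S$ with $v\nmid p$. For such $v$ the group $E(F_v)\otimes\Qp/\Zp$ vanishes, as the formal group is pro-$\ell$ for the residue characteristic $\ell\neq p$, and the local Euler characteristic formula shows that $H^1(F_v,E_{p^\infty})$ is finite; by the Kummer sequence the term $J_v(E/F)=H^1(F_v,E)(p)$ is then finite, and a fortiori so is its subgroup $\ker\gamma_v$. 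This yields the asserted finite index and completes the argument.
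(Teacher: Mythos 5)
Your overall strategy coincides with the paper's: a prime-by-prime analysis of $\gamma=\oplus_v\gamma_v$, with the Coates--Greenberg theory of deeply ramified extensions plus supersingularity forcing $J_v(E/\fcyc)=0$ for $v\mid p$ (the paper cites \cite[Proposition~4.3]{C-G}, offering Schneider's universal-norm theorem as an alternative), and finiteness of $J_v(E/F)=H^1(F_v,E_{p^\infty})$ for $v\nmid p$ giving the finite-index claim. Those two parts of your argument are correct and are essentially the paper's proof.

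The gap is in your surjectivity step. You assert that, since $\Gamma_v$ has $p$-cohomological dimension at most one, ``a standard argument'' shows that the relevant part of $H^2(\Gamma_v,E(F_{\cyc,w}))$ vanishes. But cohomological-dimension bounds control cohomology only with \emph{torsion} (here $p$-primary) coefficients, and $E(F_{\cyc,w})$ is not a torsion module; for a general discrete module the conclusion is simply false, e.g.\ $H^2(\Zp,\Z)\cong H^1(\Zp,\Q/\Z)\cong\Qp/\Zp\neq 0$, and its $p$-part is nonzero. So, as written, this step is unjustified, and it is precisely where care is needed. The repair is what the paper does implicitly: for $v\nmid p$, first use $E(L)\otimes\Qp/\Zp=0$ for every finite extension $L/F_v$ (a fact you invoke anyway in your final paragraph) to identify $J_v(E/F)=H^1(F_v,E_{p^\infty})$ and $J_v(E/\fcyc)=\bigoplus_{w\mid v}H^1(F_{\cyc,w},E_{p^\infty})$, so that $\gamma_v$ becomes a restriction map with coefficients in the $p$-primary module $E_{p^\infty}$; then Hochschild--Serre gives $\coker\gamma_v\hookrightarrow H^2\bigl(\Gamma_v,E_{p^\infty}(F_{\cyc,w})\bigr)$, which genuinely vanishes because $\Gamma_v$ has $p$-cohomological dimension one and $E_{p^\infty}(F_{\cyc,w})$ is $p$-primary torsion. (Alternatively, one can keep $E(F_{\cyc,w})$-coefficients, but then one must argue via Mattuck's theorem that for $v\nmid p$ the group $E(F_{\cyc,w})$ is, modulo torsion, uniquely $p$-divisible, so the non-torsion part contributes nothing to $p$-primary cohomology.) For $v\mid p$ no such argument is needed at all: surjectivity of $\gamma_v$ is immediate from your own correct observation that $J_v(E/\fcyc)=0$. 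With this modification your proof agrees with the paper's.
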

\begin{proof} 
When $v\nmid p$, it is shown in \cite[Lemma~1.11]{C-S} that $J_v(E/F)=H^1(F_v,E_{p^\infty})$ is finite, so $\ker\gamma_v$ is finite. The surjectivity of $\gamma_v$ is once again a consequence of the Hochschild-Serre spectral sequence and the fact that the $p$-cohomological dimension of $\Gamma$ is one.

Let us now consider the case $v|p$. Let $w$ be any prime of $F_\cyc$ above $v$ and write $\hat E$ for the formal group attached to $E$ at this prime. Since $E$ is supersingular at $w$, we have $E_{p^\infty}(F_{\cyc,w})=\hat E_{p^\infty}(F_{\cyc,w})$. In particular, we have
\[
H^1(F_{\cyc,w},E)(p)\cong \frac{H^1(F_{\cyc,w},E_{p^\infty})}{E(F_{\cyc,w})\otimes\Qp/\Zp}=\frac{H^1(F_{\cyc,w},\hat E_{p^\infty})}{\hat E(F_{\cyc,w})\otimes\Qp/\Zp}.
\]
The last quotient is zero by \cite[Proposition~4.3]{C-G}. Alternatively, we may consider the Pontryagin dual of $H^1(F_{\cyc,w},E)(p)$, which is given by the universal norm $\displaystyle \varprojlim_{L} \hat E(L)$, where $L$ runs through finite extensions of $F_v$ contained inside $F_{\cyc,w}$. This inverse limit is zero by \cite[Theorem 2]{Sc}. In particular, $\gamma_v$ is surjective and $\ker\gamma_v= J_v(E/F)$.
\end{proof}

\begin{remark}\label{rk:J}
The proof of Lemma~\ref{kerg} tells us that 
\[
\bigoplus_{v\in S}J_v(E/\fcyc)=\bigoplus_{v\in S,v\nmid p}J_v(E/\fcyc)=\bigoplus_{w|v\in S,v\nmid p}H^1(F_{\cyc,w},E_{p^\infty}).
\]
\end{remark}

\begin{proposition}\label{prop:equiv}
Suppose that $\lambda_\cyc$ in \eqref{selL} (with $\mathcal{L}=\fcyc$) is surjective. If $H^2(F_S/F,E_{p^\infty})=0$, then the following are equivalent:
\begin{enumerate}[(i)]
\item The map $\lambda_\cyc^\Gamma$ in \eqref{fund} is surjective;
\item $H^1(\Gamma,\sel_p(E/F_\cyc))=0$;
\item The map $\delta$ in \eqref{delta} is surjective.
\end{enumerate}
\end{proposition}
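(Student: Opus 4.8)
The plan is to establish the two equivalences (i) $\Leftrightarrow$ (ii) and (i) $\Leftrightarrow$ (iii) separately, both building on the diagram \eqref{fund} together with Lemmas~\ref{lem:beta} and~\ref{kerg}. The hypothesis that $\lambda_\cyc$ is surjective says that \eqref{selL} with $\mathcal{L}=\fcyc$ is a short exact sequence of discrete $\Gamma$-modules
\[
0\to\sel_p(E/\fcyc)\to H^1(F_S/\fcyc,E_{p^\infty})\xrightarrow{\lambda_\cyc}\bigoplus_{v\in S}J_v(E/\fcyc)\to0,
\]
which I would exploit through $\Gamma$-cohomology.

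For (i) $\Leftrightarrow$ (ii), I would write down the long exact sequence in $\Gamma$-cohomology of this short exact sequence. Because $\Gamma\cong\Zp$ has $p$-cohomological dimension one, the sequence stops after the $H^1$ terms, and the relevant portion is
\[
\Bigl(\bigoplus_{v\in S}J_v(E/\fcyc)\Bigr)^{\!\Gamma}\to H^1(\Gamma,\sel_p(E/\fcyc))\to H^1\bigl(\Gamma,H^1(F_S/\fcyc,E_{p^\infty})\bigr),
\]
in which the image of the first arrow is $\coker(\lambda_\cyc^\Gamma)$. The decisive input is that the rightmost group vanishes: the Hochschild--Serre spectral sequence $H^i(\Gamma,H^j(F_S/\fcyc,E_{p^\infty}))\Rightarrow H^{i+j}(F_S/F,E_{p^\infty})$ degenerates into short exact sequences since $\mathrm{cd}_p(\Gamma)=1$, whence $H^1(\Gamma,H^1(F_S/\fcyc,E_{p^\infty}))$ injects into $H^2(F_S/F,E_{p^\infty})$, which is zero by \eqref{WLC}. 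Thus the connecting map gives $\coker(\lambda_\cyc^\Gamma)\cong H^1(\Gamma,\sel_p(E/\fcyc))$, and both (i) and (ii) amount to the vanishing of this common group.

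For (i) $\Leftrightarrow$ (iii), I would chase \eqref{fund} using that $\beta$ and $\gamma$ are surjective (Lemmas~\ref{lem:beta} and~\ref{kerg}). Commutativity yields $\gamma\circ\lambda=\lambda_\cyc^\Gamma\circ\beta$; since $\beta$ is surjective the two images coincide, giving $\Img(\lambda_\cyc^\Gamma)=\gamma(\Img\lambda)$. As $\gamma$ is surjective, $\bigoplus_{v\in S}J_v(E/\fcyc)^\Gamma=\gamma\bigl(\bigoplus_{v\in S}J_v(E/F)\bigr)$, so $\lambda_\cyc^\Gamma$ is onto exactly when $\gamma(\Img\lambda)=\gamma\bigl(\bigoplus_{v\in S}J_v(E/F)\bigr)$. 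This last equality is in turn equivalent to $\bigoplus_{v\in S}J_v(E/F)=\Img\lambda+\ker\gamma$, that is, to the surjectivity of the projection $\delta\colon\ker\gamma\to\coker\lambda$ of \eqref{delta}, which closes the loop.

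I expect the main obstacle to be the vanishing $H^1(\Gamma,H^1(F_S/\fcyc,E_{p^\infty}))=0$, as this is the sole step where the hypothesis \eqref{WLC} genuinely enters and where one must invoke the collapse of the Hochschild--Serre spectral sequence afforded by $\mathrm{cd}_p(\Gamma)=1$. The remaining arguments are formal diagram chases, but I would take care to verify that the top row of \eqref{fund} is precisely the (left exact) sequence of $\Gamma$-invariants of \eqref{selL}, and that the connecting homomorphism delivers a genuine isomorphism $\coker(\lambda_\cyc^\Gamma)\cong H^1(\Gamma,\sel_p(E/\fcyc))$ rather than only a one-sided comparison.
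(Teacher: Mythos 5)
Your proof is correct and follows essentially the same route as the paper: the equivalence (i)$\Leftrightarrow$(ii) via the $\Gamma$-cohomology long exact sequence of the short exact sequence afforded by the surjectivity of $\lambda_\cyc$, with the key vanishing $H^1(\Gamma,H^1(F_S/\fcyc,E_{p^\infty}))=0$ extracted from Hochschild--Serre and \eqref{WLC}, and the equivalence (i)$\Leftrightarrow$(iii) by a diagram chase in \eqref{fund} using the surjectivity of $\beta$ and $\gamma$ from Lemmas~\ref{lem:beta} and~\ref{kerg}. Your image-theoretic phrasing of the chase ($\Img(\lambda_\cyc^\Gamma)=\gamma(\Img\lambda)$ and $\bigoplus_{v\in S}J_v(E/F)=\Img\lambda+\ker\gamma$) is just an unpacked version of the paper's exact sequence of cokernels \eqref{eq:exactdelta}, so there is nothing substantively different to flag.
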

\begin{proof}
The assumption $H^2(F_S/F,E_{p^\infty})=0$, together with the Hochschild-Serre spectral sequence imply that $$H^1(\Gamma,H^1(F_S/\fcyc, E_{p^{\infty}}))=0$$ (c.f. \cite[Equation (33)]{C-S}). Therefore, the equivalence of $(i)$ and $(ii)$ follows from the $\Gamma$-Galois cohomology exact sequence attached to the short exact sequence
\[
0\rightarrow \sel_p(E/\fcyc)\rightarrow H^1(F_S/\fcyc, E_{p^{\infty}})\rightarrow \bigoplus_{v\in S}J_v(E/\fcyc)\rightarrow 0.
\]
$(i)\Rightarrow(iii)$: Consider the following commutative diagram:
\begin{equation}\label{eq:diagramdelta}
\begin{array}{cccl}
H^1(F_S/\fcyc, E_{p^{\infty}})^{\Gamma}   & \stackrel{\lambda_\cyc^\Gamma}{\longrightarrow} &\bigoplus_{v\in S}J_v(E/\fcyc)^\Gamma\\
\beta\Big\uparrow&  &  \gamma\Big\uparrow\\
H^1(F_S/F,E_{p^\infty})&\stackrel{\lambda}{\longrightarrow} & \bigoplus_{v\in S} (J_v(E/F))&\longrightarrow \coker\lambda\longrightarrow0.
\end{array}
\end{equation}
Lemmas~\ref{lem:beta} and \ref{kerg} say that $\beta$ and $\gamma$ are surjective. In particular, $\lambda^\Gamma_\cyc\circ\beta=\gamma\circ\lambda$ is surjective under condition $(i)$. By a simple diagram chase, there is a short exact sequence
\begin{equation}\label{eq:exactdelta}
\ker\gamma\stackrel{\delta}{\longrightarrow}\coker\lambda\longrightarrow\coker(\gamma\circ\lambda)\longrightarrow\coker\gamma=0,
\end{equation}
where the vanishing of $\coker\gamma$ is given by Lemma~\ref{kerg}.
Hence, $\delta$ is surjective and $(iii)$ holds.\\\\
\noindent
$(iii)\Rightarrow(i)$ If $\delta$ is surjective, then the surjectivity of $\gamma$ implies that $\gamma\circ\lambda$ is surjective. Hence, $\lambda_\cyc^\Gamma$ is also surjective by the commutative diagram \eqref{eq:diagramdelta}.
\end{proof}

The following proposition is a partial converse to the above proposition.
\begin{proposition}\label{partcon}
Assume  $H^2(F_S/F, E_{p^{\infty}})=0$. With notation as before, suppose that  the map $\lambda_\cyc^\Gamma$ in \eqref{fund} is surjective.
Then the map $\lambda_\cyc$ is surjective and $H^1(\Gamma,\sel_p(E/\fcyc))=0$.
\end{proposition}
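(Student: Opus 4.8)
The plan is to analyze $\lambda_\cyc$ over $\fcyc$ by splitting it through its image. Write $B = H^1(F_S/\fcyc, E_{p^\infty})$, $J = \bigoplus_{v \in S} J_v(E/\fcyc)$, $I = \Img \lambda_\cyc \subseteq J$ and $C = \coker \lambda_\cyc = J/I$, and record the two exact sequences of discrete $\Gamma$-modules
\[
0 \to \sel_p(E/\fcyc) \to B \to I \to 0, \qquad 0 \to I \to J \to C \to 0.
\]
Two inputs come for free. First, as already noted in the proof of Proposition~\ref{prop:equiv}, the hypothesis $H^2(F_S/F, E_{p^\infty}) = 0$ together with the Hochschild--Serre spectral sequence gives $H^1(\Gamma, B) = 0$. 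Second, $\Gamma$ has $p$-cohomological dimension one, so every $H^2(\Gamma, -)$ vanishes. The key structural observation is that $\lambda_\cyc^\Gamma$ factors as $B^\Gamma \xrightarrow{\mu} I^\Gamma \hookrightarrow J^\Gamma$, where the second arrow is the inclusion coming from left-exactness of $(-)^\Gamma$ applied to $I \hookrightarrow J$.

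First I would extract assertion (2). Taking $\Gamma$-cohomology of the first sequence and using $H^1(\Gamma, B) = 0$ yields $H^1(\Gamma, \sel_p(E/\fcyc)) = \coker \mu$; using in addition $H^2(\Gamma, \sel_p(E/\fcyc)) = 0$ gives $H^1(\Gamma, I) = 0$. Now the hypothesis that $\lambda_\cyc^\Gamma$ is surjective means that $\Img \mu$, which a priori sits inside $I^\Gamma \subseteq J^\Gamma$, fills out all of $J^\Gamma$; since $\iota$ is injective this forces both $I^\Gamma = J^\Gamma$ and $\mu$ surjective. Surjectivity of $\mu$ gives $H^1(\Gamma, \sel_p(E/\fcyc)) = \coker \mu = 0$, which is assertion (2).

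For assertion (1) I would feed $I^\Gamma = J^\Gamma$ and $H^1(\Gamma, I) = 0$ into the $\Gamma$-cohomology of the second sequence,
\[
0 \to I^\Gamma \to J^\Gamma \to C^\Gamma \to H^1(\Gamma, I) = 0,
\]
in which the map $I^\Gamma \to J^\Gamma$ is now an isomorphism, forcing $C^\Gamma = 0$. It then remains to upgrade $C^\Gamma = 0$ to $C = 0$, i.e. to surjectivity of $\lambda_\cyc$. For this I would pass to Pontryagin duals: $C^\vee$ is a quotient of $J^\vee$, which is a finitely generated $\lag$-module (by Remark~\ref{rk:J}, $J$ is a finite direct sum of the local cohomology groups $H^1(F_{\cyc,w}, E_{p^\infty})$ for $w \mid v \nmid p$, whose duals are finitely generated over $\lag$), so $C^\vee$ is finitely generated over $\lag$. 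Under Pontryagin duality $C^\Gamma = 0$ corresponds to $(C^\vee)_\Gamma = 0$, and topological Nakayama's lemma over the local ring $\lag$ then gives $C^\vee = 0$, hence $C = 0$ and $\lambda_\cyc$ is surjective.

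The main obstacle is this last step: unlike assertion (2) and the vanishing $C^\Gamma = 0$, which are pure diagram chases powered by $H^1(\Gamma, B) = 0$ and $\mathrm{cd}_p(\Gamma) = 1$, deducing $C = 0$ from $C^\Gamma = 0$ is not formal and genuinely needs the finite generation of $J^\vee$ (hence $C^\vee$) over $\lag$ so that Nakayama applies. One must also track carefully the factorization of $\lambda_\cyc^\Gamma$ through $I^\Gamma$, since it is precisely this factorization --- not surjectivity of $\lambda_\cyc^\Gamma$ onto $J^\Gamma$ by itself --- that yields simultaneously $I^\Gamma = J^\Gamma$ and the surjectivity of $\mu$.
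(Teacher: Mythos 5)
Your proof is correct and is essentially the paper's own argument: the same splitting of $\lambda_\cyc$ through its image $I=\Img\lambda_\cyc$, the same use of $H^1(\Gamma,H^1(F_S/\fcyc,E_{p^\infty}))=0$ (from Hochschild--Serre and $H^2(F_S/F,E_{p^\infty})=0$) together with $\mathrm{cd}_p(\Gamma)=1$, the same observation that surjectivity of $\lambda_\cyc^\Gamma=\iota\circ\mu$ forces the injection $\iota\colon I^\Gamma\hookrightarrow J^\Gamma$ to be an isomorphism and $\mu$ to be surjective (the paper's triangle \eqref{eq:triangle}), and the same final passage from $(\coker\lambda_\cyc)^\Gamma=0$ to $\coker\lambda_\cyc=0$ via Nakayama --- a step the paper invokes tersely and whose prerequisite (cofinite generation of $\coker\lambda_\cyc$ over $\lag$) you rightly make explicit. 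One trivial correction: Pontryagin duality turns the surjection $J\twoheadrightarrow C$ into an injection $C^\vee\hookrightarrow J^\vee$, so $C^\vee$ is a submodule rather than a quotient of $J^\vee$; since $\lag$ is Noetherian, this does not affect the finite-generation conclusion or the application of Nakayama's lemma.
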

\begin{proof} Put
$\Img_{\cyc}=\Img(\lambda_{\cyc}).$ The sequence
\eqref{selL} gives the two short exact sequences 
\begin{equation}\label{ssel1}
\begin{CD}
0 &@>>> \sel_p(E/\fcyc) &@>>> H^1(F_S/\fcyc, E_{p^{\infty}}) &@>{\lambda_{\cyc}}>>
 \Img_{\cyc} &@>>> 0\cr
0 & @>>> \Img_{\cyc}   &@>>> \bigoplus_{v\in S} J_v(E/\fcyc) &@>>> \coker \lambda_{\cyc}  &@>>>0.
\end{CD}
\end{equation}

The long exact $\Gamma$-Galois cohomology sequence associated to the first sequence, along with the Hochschild-Serre spectral sequence  gives  $H^1(\Gamma, \Img_{\cyc})=0.$
Using this  in the long exact $\Gamma$-cohomology sequence of the second short exact sequence in \eqref{ssel1}, it follows that the natural map
\[
\left(\bigoplus_{v\in S}\, J_v(E/\fcyc)\right)^{\Gamma }\to (\coker (\lambda_{\cyc}))^{\Gamma}
\]
is surjective. Consider the natural commutative diagram below 
\begin{equation}\label{eq:triangle}
\begin{array}{rcc}
H^1(F_S/\fcyc, E_{p^{\infty}})^{\Gamma}   & \longrightarrow &(\Img_{\cyc})^{\Gamma}\\
&\searrow   &  \downarrow\\
& &\left( \bigoplus_{v\in S}J_v(E/\fcyc)\right)^{\Gamma}.
\end{array}
\end{equation}
From the two $\Gamma$-Galois cohomology sequences associated to the two short exact sequences in \eqref{ssel1}, the vertical arrow is injective and the cokernel of the horizontal arrow is $H^1(\Gamma, \sel_p(E/\fcyc))$. 
Further, under the hypothesis of the surjectivity of $\lambda^{\Gamma}_{\cyc}$,  we conclude that
the vertical arrow in \eqref{eq:triangle} is an isomorphism. In particular, the horizontal map is surjective whence clearly
\begin{equation}\label{coker0}
\coker(\lambda_{\cyc})^{\Gamma}=0 , \quad {\text {and}} \quad H^1(\Gamma,\sel_p(E/\fcyc))=0.
\end{equation}
In particular, $\coker(\lambda_{\cyc})=0$ by Nakayama's lemma, which proves the surjectivity of $\lambda_\cyc$.
\end{proof}

\begin{corollary}\label{cor:injectdeltadual}
Assume that $H^2(F_S/F,E_{p^\infty})=0$ and that  $\Sha(E/F)(p)$ is finite. Then the surjectivity of 
$\delta$ is equivalent to the injectivity of 
\begin{equation}
E(F)^*\rightarrow \bigoplus_{v|p}E(F_v)^*\oplus\bigoplus_{v\nmid p} E_{p^\infty}(F_{\cyc,v})_{\Gamma_v},\label{eq:localization}
\end{equation}
where $M^*$ denotes the $p$-adic completion $\varprojlim M/p^n$ for an abelian group $M$. 
\end{corollary}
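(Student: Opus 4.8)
The plan is to prove the equivalence by passing to Pontryagin duals. Since a map of discrete (or compact) abelian groups is surjective if and only if its dual is injective, surjectivity of $\delta$ is equivalent to injectivity of $\delta^\vee\colon(\coker\lambda)^\vee\to(\ker\gamma)^\vee$. I would then identify all three ingredients separately, namely the source $(\coker\lambda)^\vee$, the target $(\ker\gamma)^\vee$, and the map $\delta^\vee$, and match them with the data in \eqref{eq:localization}.

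First I would identify the source. Applying the Cassels--Poitou--Tate exact sequence to $E_{p^\infty}$ with its Selmer local conditions, the cokernel of $\lambda$ fits into an exact sequence $\bigoplus_{v\in S}J_v(E/F)\to\mathcal{S}^\vee\to H^2(F_S/F,E_{p^\infty})$, where $\mathcal{S}=\plim_n\sel_{p^n}(E/F)$ is the compact Selmer group attached to $T_pE$ with the orthogonal (Kummer) local conditions, and where the image of $\lambda$ is exactly the kernel of the first arrow. The hypothesis $H^2(F_S/F,E_{p^\infty})=0$ then forces $\coker\lambda\cong\mathcal{S}^\vee$, hence $(\coker\lambda)^\vee\cong\mathcal{S}$. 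Invoking the finiteness of $\Sha(E/F)(p)$, I take the inverse limit of the descent sequences $0\to E(F)/p^n\to\sel_{p^n}(E/F)\to\Sha(E/F)[p^n]\to0$; the $\plim$ of the right-hand terms is $T_p\Sha(E/F)=0$, so $\mathcal{S}\cong\plim_n E(F)/p^n=E(F)^*$. This pins the source down as $E(F)^*$.

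Next I would identify the target using Lemma~\ref{kerg}, which gives $\ker\gamma=\bigoplus_{v\mid p}J_v(E/F)\oplus\bigoplus_{v\nmid p}\ker\gamma_v$. For $v\mid p$ one has $\ker\gamma_v=J_v(E/F)=H^1(F_v,E)(p)$, and local Tate duality, together with the fact that the Kummer images of $E(F_v)\otimes\Qp/\Zp$ in $H^1(F_v,E_{p^\infty})$ and of $E(F_v)^*$ in $H^1(F_v,T_pE)$ are exact orthogonal complements, yields $(H^1(F_v,E)(p))^\vee\cong E(F_v)^*$. For $v\nmid p$, inflation--restriction identifies $\ker\gamma_v$ with $H^1(\Gamma_v,E_{p^\infty}(F_{\cyc,w}))=E_{p^\infty}(F_{\cyc,w})_{\Gamma_v}$, the two agreeing because $\Gamma_v\cong\Zp$ has cohomological dimension one; these finite groups account for the remaining summands. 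Finally, since $\delta$ is nothing but the composite $\ker\gamma\hookrightarrow\bigoplus_{v\in S}J_v(E/F)\to\coker\lambda$ read off from \eqref{eq:exactdelta}, its dual is the localization of the compact Selmer class $\mathcal{S}=E(F)^*$ to the local terms. Under the identifications above this is precisely the localization map of \eqref{eq:localization}: on the $v\mid p$ components it is $E(F)^*\to E(F_v)^*$, with no truncation since $\ker\gamma_v=J_v(E/F)$ there, and on the $v\nmid p$ components it is the localization followed by the restriction $(J_v(E/F))^\vee\to(\ker\gamma_v)^\vee$.

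The main obstacle I anticipate is the correct set-up of the Poitou--Tate duality identifying $(\coker\lambda)^\vee$ with the compact Selmer group, together with the verification that the connecting map $\delta$ dualizes to localization: this requires matching the Selmer local conditions with their orthogonal complements at every place and checking compatibility of the global duality with the local Tate pairings. A secondary, more bookkeeping point is the treatment of the finite terms at $v\nmid p$, where one must identify $(\ker\gamma_v)^\vee$ with $E_{p^\infty}(F_{\cyc,w})_{\Gamma_v}$ and confirm that the induced map agrees with the stated localization; since these groups are finite this affects only the presentation of the target and not the injectivity in question.
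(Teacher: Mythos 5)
Your argument is correct and is essentially the paper's own proof: you dualize $\delta$, identify $(\coker\lambda)^\vee$ with $E(F)^*$, and compute the dual of $\ker\gamma$ summand-by-summand via Lemma~\ref{kerg}, local Tate duality at $v\mid p$, and Hochschild--Serre together with pro-cyclicity of $\Gamma_v$ at $v\nmid p$. The only cosmetic difference is that where the paper simply cites the proof of Proposition~1.9 of \cite{C-S} for $\coker\lambda\cong\widehat{E(F)^*}$, you unpack that citation explicitly (Cassels--Poitou--Tate with $H^2(F_S/F,E_{p^\infty})=0$, then $T_p\Sha(E/F)=0$ from the finiteness of $\Sha(E/F)(p)$).
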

\begin{proof}
The proof is along  similar lines of the arguments in the orrdinary case considered in \cite{C-S}.
It follows from the proof of Proposition~1.9 in \cite{C-S} that $\coker\lambda$ is isomorphic to the 
Pontryagin dual $\widehat{E(F)^*}$ of $E(F)^*$.  By Lemma~\ref{kerg}, we have
\begin{equation}
\ker\gamma=\bigoplus_{v|p}J_v(E/F)\oplus \bigoplus_ {v\nmid p}\ker\gamma_v.
\end{equation}
For $v|p$, local Tate duality implies that
\[
\widehat{J_v(E/F)}=E(F_v)^*.
\]
For $v\nmid p$, the Hochschild-Serre spectral sequence says that 
\[
\ker\gamma_v=H^1(\Gamma_v,E_{p^\infty}(F_{\cyc,v}))=E_{p^\infty}(F_{\cyc,v})_{\Gamma_v},
\]
where the second equality is a consequence of the fact that $\Gamma_v$ is pro-cyclic.
Thus the dual of $\delta$ may be written as 
\[
\widehat{\delta}\,:\, E(F)^* \rightarrow \bigoplus_{v\mid p} E(F_v)^*\oplus\bigoplus_{v\nmid p} E_{p^\infty}(F_{\cyc,v})_{\Gamma_v}.
\]
By duality, the surjectivity of $\delta$ is equivalent to the injectivity of $\widehat{\delta}$. The result now follows.
\end{proof}

\begin{corollary}
Suppose that $E$ satisfies the following hypotheses:
\begin{enumerate}[(i)]
\item  $\Sha(E/F)(p)$ is finite;
\item The Mordell-Weil rank of $E/F$ is $\le 1$.
\end{enumerate}
Then $\lambda_\cyc$ is surjective, $H^2(F_S/F,\Ep)=0$ and the equivalent conditions in Proposition~\ref{prop:equiv} hold.
\end{corollary}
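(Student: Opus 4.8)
The plan is to deduce all three assertions from the machinery already in place, with hypotheses (i) and (ii) entering at two separate points. I would organize everything around the map $\delta$ of \eqref{delta}: once $\delta$ is surjective, Lemmas~\ref{lem:beta} and \ref{kerg} feed the diagram chase \eqref{eq:exactdelta} to give surjectivity of $\lambda_\cyc^\Gamma$; Proposition~\ref{partcon} then upgrades this to surjectivity of $\lambda_\cyc$ together with $H^1(\Gamma,\sel_p(E/\fcyc))=0$; and Proposition~\ref{prop:equiv} records that all three equivalent conditions hold. So the real work is to verify \eqref{WLC} and the surjectivity of $\delta$.

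First I would establish $H^2(F_S/F,\Ep)=0$ using only hypothesis (i). The key point is that this group is $p$-divisible: since $p$ is odd, $\gal(F_S/F)$ has $p$-cohomological dimension two, so $H^3(F_S/F,T_pE)=0$, and the long exact sequence attached to $0\to T_pE\to V_pE\to\Ep\to 0$ exhibits $H^2(F_S/F,\Ep)$ as a quotient of the $\Qp$-vector space $H^2(F_S/F,V_pE)$. It therefore suffices to show its $\Zp$-corank is zero. A corank count combining the global Euler characteristic formula for $V_pE$ (which gives $\corank H^1(F_S/F,\Ep)-\corank H^2(F_S/F,\Ep)=[F:\QQ]$), the structure sequence $0\to E(F)\otimes\Qp/\Zp\to\sel_p(E/F)\to\Sha(E/F)(p)\to0$, the identification $\coker\lambda\cong\widehat{E(F)^*}$ recalled in the proof of Corollary~\ref{cor:injectdeltadual}, and the coranks of the local terms $J_v(E/F)$ from Lemma~\ref{kerg}, yields $\corank H^2(F_S/F,\Ep)=\corank\Sha(E/F)(p)$. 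Hypothesis (i) makes this vanish, so $H^2(F_S/F,\Ep)=0$; note the rank cancels out, so no rank hypothesis is needed here. When the Mordell--Weil rank is $0$ one may instead invoke the Remark following \eqref{WLC} directly.

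With \eqref{WLC} in hand and $\Sha(E/F)(p)$ finite, Corollary~\ref{cor:injectdeltadual} reduces the surjectivity of $\delta$ to the injectivity of the localisation map \eqref{eq:localization}, and this is where hypothesis (ii) is essential. Writing $E(F)^*\cong\Zp^{\,r}\oplus E(F)[p^\infty]$ with $r=\rk E(F)\le 1$, the torsion summand injects into each local factor because $E(F)[p^\infty]\hookrightarrow E(F_v)[p^\infty]$. For the free part I would argue that a point $P$ of infinite order has nonzero image in the free $\Zp$-module underlying $E(F_v)^*$ for any $v\mid p$: $P$ remains non-torsion in $E(F_v)$ and the completion map $E(F_v)\to E(F_v)^*$ kills only the (trivial) divisible part. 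Since $r\le 1$ the free part of $E(F)^*$ is at most a single copy of $\Zp$, so an element $aP+t$ of the kernel forces $a=0$ (nonzero free image of $\bar P$) and then $t=0$; hence \eqref{eq:localization} is injective.

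The main obstacle is precisely this injectivity step: for $r\ge 2$ the images of independent global points in $\bigoplus_{v\mid p}E(F_v)^*$ could satisfy a $\Zp$-linear relation, and ruling out such cancellation is exactly what $r\le 1$ buys; the secondary subtlety is obtaining $H^2(F_S/F,\Ep)=0$ in the rank-one case, which the cited Remark does not cover (it assumes $E(F)$ finite) and which I therefore handle by the divisibility-plus-corank argument above. Granting injectivity of \eqref{eq:localization}, Corollary~\ref{cor:injectdeltadual} gives surjectivity of $\delta$; the exact sequence \eqref{eq:exactdelta} together with $\coker\gamma=0$ (Lemma~\ref{kerg}) and surjectivity of $\beta$ (Lemma~\ref{lem:beta}) gives surjectivity of $\lambda_\cyc^\Gamma$; Proposition~\ref{partcon} then delivers surjectivity of $\lambda_\cyc$ and $H^1(\Gamma,\sel_p(E/\fcyc))=0$; and Proposition~\ref{prop:equiv}, now applicable since $\lambda_\cyc$ is surjective and \eqref{WLC} holds, packages these as the asserted equivalent conditions.
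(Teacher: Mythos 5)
The second half of your argument --- rank $\le 1$ gives injectivity of \eqref{eq:localization}, Corollary~\ref{cor:injectdeltadual} then gives surjectivity of $\delta$, \eqref{eq:exactdelta} gives surjectivity of $\gamma\circ\lambda$ and hence of $\lambda_\cyc^\Gamma$, and Proposition~\ref{partcon} and Proposition~\ref{prop:equiv} finish --- is correct and is exactly the paper's proof. The genuine gap is in your first step: the claim that $H^2(F_S/F,\Ep)=0$ follows from hypothesis (i) alone, with ``no rank hypothesis needed''. That claim is both circularly argued and false.

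The circularity: your corank count feeds in the identification $\coker\lambda\cong\widehat{E(F)^*}$, but in the paper this identification is established (in the proof of Corollary~\ref{cor:injectdeltadual}, via Proposition~1.9 of \cite{C-S}) \emph{under the hypothesis} $H^2(F_S/F,\Ep)=0$. Indeed, when $\Sha(E/F)(p)$ is finite, the compact Selmer group $\varprojlim\sel_{p^n}(E/F)$ equals $E(F)^*$ and the Cassels--Poitou--Tate sequence reads
\[
H^1(F_S/F,\Ep)\stackrel{\lambda}{\lra}\bigoplus_{v\in S}J_v(E/F)\lra\widehat{E(F)^*}\lra H^2(F_S/F,\Ep)\lra\bigoplus_{v\in S}H^2(F_v,\Ep)=0,
\]
so $\coker\lambda=\ker\bigl(\widehat{E(F)^*}\rightarrow H^2(F_S/F,\Ep)\bigr)$ and the map $\widehat{E(F)^*}\rightarrow H^2(F_S/F,\Ep)$ is surjective. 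Hence the corank identity $\corank\,\coker\lambda=\rk E(F)$ that your count requires is \emph{equivalent} to the vanishing you are trying to prove; plugging it in assumes the conclusion.

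The falsity: by Poitou--Tate duality and the vanishing of the local $H^2$'s, $H^2(F_S/F,\Ep)=0$ is equivalent to the vanishing of $\ker\bigl(H^1(F_S/F,T_pE)\rightarrow\bigoplus_{v\in S}H^1(F_v,T_pE)\bigr)$, and when $\Sha(E/F)(p)$ is finite this kernel contains, up to finite index, $\ker\bigl(E(F)^*\rightarrow\bigoplus_{v\in S}E(F_v)^*\bigr)$. Take $F=\Q$ and $E$ of rank $r\ge 2$ with $\Sha(E/\Q)(p)$ finite: the target has $\Zp$-rank $1$ (only the factor $E(\Qp)^*$ contributes; $E(\Q_v)^*$ is finite for $v\ne p$), so the kernel has $\Zp$-rank at least $r-1\ge 1$ and $H^2(F_S/F,\Ep)\ne 0$. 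So hypothesis (ii) is genuinely needed for this step as well; the paper obtains $H^2(F_S/F,\Ep)=0$ from (i) and (ii) together by invoking the proof of Theorem~12 of \cite{C-M}. Your instinct that the Remark after \eqref{WLC} does not cover the rank-one case was right, but the fix should instead run through the injectivity you prove in your third paragraph: by (i) the compact Selmer group is $E(F)^*$, by (ii) it injects into $\bigoplus_{v|p}E(F_v)^*$, so the displayed kernel vanishes and the duality above yields $H^2(F_S/F,\Ep)=0$ --- an argument using both hypotheses, never (i) alone.
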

\begin{proof}
By the same proof of Theorem~12 in \cite{C-M} (where the authors worked over $\Q$ instead of a general number field $F$), (i) and (ii) imply that $H^2(F_S/F,\Ep)=0$.

Since the Mordell-Weil rank of $E/F$ is assumed to be $\le 1$, the quotient $E(F)^*$ is either $\Ep(F)$  or   $\Ep(F)\oplus\Zp\cdot P$ for some non-torsion $P\in E(F)$. Consequently, the localization map $E(F)^*\rightarrow E(F_v)^*$ is an injection for any $v|p$ (see also \cite[Lemma~8]{C-M}). In particular, the map in \eqref{eq:localization} has to be injective.

We can now apply Corollary~\ref{cor:injectdeltadual}, which tells us that $\delta$ is surjective. By \eqref{eq:exactdelta}, $\gamma\circ\lambda$ is also surjective. In particular, $\lambda_\cyc^\Gamma$ in \eqref{fund} is surjective. Proposition~\ref{partcon} gives the surjectivity of $\lambda_\cyc$. Proposition~\ref{prop:equiv} now applies.
\end{proof}

We now give a sufficient condition for the surjectivity of $\lambda_\cyc$.
\begin{lemma}\label{lem:PT}
If the localization map
\[
\loc:\varprojlim_n H^1(F_S/F_n,T_p(E))\rightarrow \bigoplus_{v|p}\varprojlim_n  H^1(F_{n,v},T_p(E))
\]
is injective. Then $\lambda_\cyc$ is surjective.
\end{lemma}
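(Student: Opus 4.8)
The goal is to show that $\coker\lambda_\cyc=0$. The plan is to identify this cokernel, via Poitou--Tate global duality over the cyclotomic tower, with a subgroup of the Pontryagin dual of $\ker\loc$, so that the injectivity of $\loc$ forces the cokernel to vanish. First I would record, using Remark~\ref{rk:J}, that the target of $\lambda_\cyc$ is $\bigoplus_{v\in S,\,v\nmid p}H^1(F_{\cyc,v},\Ep)$: the supersingular hypothesis makes the local terms at $v\mid p$ vanish. Thus $\sel_p(E/\fcyc)$ is the Selmer group of $\Ep$ cut out by the local conditions ``full localization at $v\nmid p$, no condition at $v\mid p$,'' and $\lambda_\cyc$ is the associated global-to-local map.

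Next I would invoke the Iwasawa-theoretic Cassels--Poitou--Tate sequence for this Selmer structure. Writing $\loc$ as the global-to-local map for $T_p(E)$ with the complementary local conditions (no condition at $v\nmid p$, full localization at $v\mid p$), global duality produces an exact sequence whose relevant segment reads
\[
H^1(F_S/\fcyc,\Ep)\xrightarrow{\ \lambda_\cyc\ }\bigoplus_{v\in S}J_v(E/\fcyc)\xrightarrow{\ \psi\ }(\ker\loc)^\vee\longrightarrow H^2(F_S/\fcyc,\Ep).
\]
Exactness at the middle term gives an injection $\coker\lambda_\cyc\hookrightarrow(\ker\loc)^\vee$. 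To justify that $\ker\loc$ is precisely the dual Selmer group appearing here, I would check that the two families of local conditions are orthogonal complements under the local Tate pairing $H^1(F_{\cyc,v},\Ep)^\vee\cong\plim_n H^1(F_{n,v},T_p(E))$; at $v\mid p$ this uses the vanishing of the universal norms established in the proof of Lemma~\ref{kerg} (via Schneider's theorem), which is exactly what turns ``no Kummer condition at $v\mid p$ for $\Ep$'' into ``full localization at $v\mid p$ for $T_p(E)$.''

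Granting this, the conclusion is immediate: if $\loc$ is injective then $\ker\loc=0$, hence $(\ker\loc)^\vee=0$, and therefore $\coker\lambda_\cyc=0$, i.e. $\lambda_\cyc$ is surjective. The main obstacle is the duality bookkeeping in the previous paragraph: one must set up the Poitou--Tate sequence Iwasawa-theoretically (pairing the projective limit of the compact $T_p(E)$-cohomology against the direct limit of the discrete $\Ep$-cohomology), verify the complementarity of the local conditions prime by prime, and confirm that the connecting map $\psi$ is the tail of the global duality sequence so that $\Img\lambda_\cyc=\ker\psi$. Once the injection $\coker\lambda_\cyc\hookrightarrow(\ker\loc)^\vee$ is in place, no further computation is needed.
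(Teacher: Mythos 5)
Your proof is correct and takes essentially the same route as the paper: the paper's own proof combines Remark~\ref{rk:J} with the Poitou--Tate exact sequence $0\to\sel_p(E/\fcyc)\to H^1(F_S/\fcyc,\Ep)\to\bigoplus_{v\in S}J_v(E/\fcyc)\to\widehat{\ker\loc}$, so that injectivity of $\loc$ kills the term receiving $\coker\lambda_\cyc$. The only difference is that you spell out the duality bookkeeping (the identification of $\ker\loc$ as the dual Selmer group, using the vanishing of universal norms at $v\mid p$) which the paper leaves implicit.
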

\begin{proof}
The description of $J_v(E/\fcyc)$ in Remark~\ref{rk:J} gives the following Poitou-Tate exact sequence
\[
0\rightarrow  \sel_p(E/\fcyc)\rightarrow H^1(F_S/\fcyc, E_{p^{\infty}})\rightarrow \bigoplus_{v\in S} J_v(E/\fcyc) \rightarrow \widehat{\ker\loc},
\]
from which the result follows.
\end{proof}

\begin{remark}\label{rk:surj}
 When $F=\QQ$, the injectivity of the localization map in Lemma~\ref{lem:PT} has been established by Kobayashi \cite[Theorem~7.3]{Ko}.
\end{remark}

\begin{remark}
We shall see in \S\ref{S:pm} that it is possible to deduce the surjectivity of $\lambda_\cyc$ from the theory of plus/minus Selmer groups when the latter are defined. See in particular Lemma~\ref{lem:surjpm} below.
\end{remark}

Another sufficient condition for the surjectivity of $\lambda_\cyc$ is the finiteness of $\sel_p(E/F)$.
\begin{proposition}\label{prop:lambdasur}
Suppose that $\sel_p(E/F)$ is finite, then $\lambda_\cyc$ is surjective.
\end{proposition}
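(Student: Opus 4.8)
The plan is to deduce the surjectivity of $\lambda_\cyc$ from Proposition~\ref{partcon} by first establishing the surjectivity of $\lambda_\cyc^\Gamma$, and to obtain the latter through the characterization of the surjectivity of $\delta$ furnished by Corollary~\ref{cor:injectdeltadual}. This runs the argument of the preceding rank-$\le 1$ discussion in the special case of rank zero.

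First I would record what the finiteness of $\sel_p(E/F)$ yields. From the descent exact sequence $0\to E(F)\otimes\Qp/\Zp\to \sel_p(E/F)\to \Sha(E/F)(p)\to 0$, finiteness of $\sel_p(E/F)$ forces both $E(F)\otimes\Qp/\Zp$ and $\Sha(E/F)(p)$ to be finite. In particular the Mordell--Weil rank of $E/F$ is zero, and by the Remark following \eqref{WLC} we get $H^2(F_S/F,\Ep)=0$. Thus the standing hypotheses of Corollary~\ref{cor:injectdeltadual} hold.

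The key step is to check injectivity of the localization map \eqref{eq:localization}. Since the rank is zero, $E(F)$ is finite and its $p$-adic completion $E(F)^*=\varprojlim_n E(F)/p^n$ is just the $p$-primary torsion subgroup $\Ep(F)$. For any prime $v\mid p$ the induced map $E(F)^*\to E(F_v)^*$ carries $\Ep(F)$ into the torsion part $\Ep(F_v)$ of $E(F_v)^*$ via the natural localization of torsion points, which is injective. Hence the component of \eqref{eq:localization} at a single $v\mid p$ is already injective, so \eqref{eq:localization} is injective, and Corollary~\ref{cor:injectdeltadual} gives that $\delta$ is surjective.

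It then remains to run the diagram chase backwards. Surjectivity of $\delta$ together with $\coker\gamma=0$ (Lemma~\ref{kerg}) shows, via the exact sequence \eqref{eq:exactdelta}, that $\gamma\circ\lambda$ is surjective; since $\beta$ is surjective (Lemma~\ref{lem:beta}) and $\lambda_\cyc^\Gamma\circ\beta=\gamma\circ\lambda$, the map $\lambda_\cyc^\Gamma$ is surjective as well. Finally, as $H^2(F_S/F,\Ep)=0$, Proposition~\ref{partcon} upgrades the surjectivity of $\lambda_\cyc^\Gamma$ to that of $\lambda_\cyc$, which is the desired conclusion. The only genuine input beyond assembling the earlier results is the injectivity of $\Ep(F)\to E(F_v)^*$, which is precisely where the vanishing of the rank (equivalently, the finiteness of $\sel_p(E/F)$) enters; everything else is formal, so I expect no serious obstacle.
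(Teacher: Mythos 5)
Your proof is correct, but it takes a genuinely different route from the paper's, whose own proof is essentially two lines: by the proof of Proposition~3.9 in \cite{C-S}, the finiteness of $\sel_p(E/F)$ already implies surjectivity of the localization map onto $\bigoplus_{v\in S,\,v\nmid p}J_v(E/\fcyc)$, and by Remark~\ref{rk:J} --- the distinctly supersingular fact that $J_v(E/\fcyc)=0$ for $v\mid p$, coming from the vanishing of universal norms --- that map \emph{is} $\lambda_\cyc$. You instead rerun the paper's rank~$\le 1$ corollary in the rank-zero case: finiteness of $\sel_p(E/F)$ forces rank $0$ and $\Sha(E/F)(p)$ finite, hence $H^2(F_S/F,\Ep)=0$ and injectivity of \eqref{eq:localization} (torsion injects into $E(F_v)^*$), so Corollary~\ref{cor:injectdeltadual} gives surjectivity of $\delta$, the sequence \eqref{eq:exactdelta} gives surjectivity of $\gamma\circ\lambda$ and hence of $\lambda_\cyc^\Gamma$, and Proposition~\ref{partcon} upgrades this to surjectivity of $\lambda_\cyc$. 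In fact, your statement is literally a special case of the unlabelled corollary following Corollary~\ref{cor:injectdeltadual}, whose stated conclusions already include the surjectivity of $\lambda_\cyc$; the real content of your argument is the observation that finiteness of $\sel_p(E/F)$ implies its hypotheses. As for what each approach buys: yours stays entirely within the $\delta$-machinery of Section~2 and yields extra output along the way (notably $H^1(\Gamma,\sel_p(E/\fcyc))=0$ via Proposition~\ref{partcon}), while the paper's proof is shorter, bypasses $\delta$ altogether, and cleanly isolates where supersingularity enters (Remark~\ref{rk:J}). One fine point you should make explicit: \eqref{eq:exactdelta} is derived in the paper inside the proof of Proposition~\ref{prop:equiv}, whose standing hypothesis is that $\lambda_\cyc$ is surjective --- exactly what you are trying to prove. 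This causes no circularity, since \eqref{eq:exactdelta} is an instance of the unconditional kernel--cokernel exact sequence for the composite $\gamma\circ\lambda$ (and the paper itself uses it this way in the rank~$\le 1$ corollary), but your write-up silently relies on this unconditional validity and should say so.
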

\begin{proof}
Recall from \cite[proof of Proposition~3.9]{C-S} that the finiteness of $\sel_p(E/F)$ implies the surjectivity of the map
\[
\lambda'_\cyc: H^1(F_S/F,E_{p^\infty})\rightarrow \bigoplus_{v\in S,v\nmid p}J_v(\fcyc).
\]
 But $\lambda'_\cyc$ coincides with $\lambda_\cyc$ by Remark~\ref{rk:J}. Hence the result.
\end{proof}

\begin{theorem}\label{thm:notorsion}
Let $E$ be an elliptic curve over $F$ such that the following hypotheses are satisfied:
\begin{enumerate}[(i)]
\item $E$ has supersingular reduction at all the primes $v$ of $F$ lying above $p$.
\item The map $\lambda_\cyc$  in \eqref{selL} (with $\mathcal{L}=\fcyc$) is surjective.
\item $H^2(F_S/F, E_{p^{\infty}})=0.$
\item The equivalent conditions in Proposition~\ref{prop:equiv} and Corollary~\ref{cor:injectdeltadual} hold.
\end{enumerate}
Then the $\Lambda(\Gamma)$-module $ X(E/\fcyc)$ admits no non-trivial finite sub-module.
\end{theorem}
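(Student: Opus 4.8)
The plan is to reduce the assertion to the vanishing of the first $\Gamma$-homology group of $X:=X(E/\fcyc)$ and then to exploit that $\Gamma\cong\Zp$ is pro-$p$. First I would observe that hypotheses (ii) and (iii) place us exactly in the setting of Proposition~\ref{prop:equiv}, and that hypothesis (iv) guarantees the equivalent conditions there hold; in particular condition (ii) of that proposition gives $H^1(\Gamma,\sel_p(E/\fcyc))=0$. Dualising, and using that Pontryagin duality interchanges the continuous cohomology of the discrete module $\sel_p(E/\fcyc)$ with the homology of its compact dual, namely $H^n(\Gamma,\sel_p(E/\fcyc))^\vee\cong H_n(\Gamma,X)$, this is equivalent to $H_1(\Gamma,X)=0$.

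Next I would make the homological interpretation explicit. Fixing a topological generator $\gamma$ of $\Gamma$ and writing $T=\gamma-1$, the standard free resolution $0\to\lag\stackrel{T}{\to}\lag\to\Zp\to0$ identifies $H_1(\Gamma,X)$ with the submodule $X[T]=\{x\in X:\,Tx=0\}=X^\Gamma$. Thus the previous step says precisely that $T$ acts injectively on $X$, i.e. $X$ has no nonzero element fixed by $\Gamma$. I would emphasise that this argument is insensitive to whether $X$ is $\lag$-torsion; this is exactly what allows the ordinary-case method of \cite{C-S} to carry over to the supersingular setting, where $X$ has positive rank.

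Finally, suppose for contradiction that $X$ contained a nonzero finite $\lag$-submodule $Y$. Since $Y$ is a nonzero finite abelian $p$-group and $\Gamma$ acts continuously on it, the action factors through a finite $p$-group quotient $P$ of $\Gamma$, and the class equation gives $|Y^P|\equiv|Y|\equiv0\pmod p$; as $0\in Y^P$, this forces $Y^\Gamma=Y^P\neq0$. But $Y^\Gamma\subseteq X^\Gamma=X[T]=0$, a contradiction. Hence $X$ admits no nonzero finite $\lag$-submodule.

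The substantive input is already contained in Proposition~\ref{prop:equiv}, so what remains is essentially bookkeeping. The one point needing care is to identify the correct group under Pontryagin duality: it is the degree-one \emph{homology} $H_1(\Gamma,X)\cong X[T]$, the invariants, which must not be confused with the coinvariants $X_\Gamma=H_0(\Gamma,X)$. Together with the elementary pro-$p$ fixed-point count, these are the only delicate steps, and I do not anticipate any genuine obstacle beyond making the identifications consistently.
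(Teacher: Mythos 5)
Your proof is correct and follows essentially the same route as the paper: both reduce the statement, via Pontryagin duality and condition (ii) of Proposition~\ref{prop:equiv} (guaranteed by hypothesis (iv)), to the vanishing of $H_1(\Gamma, X(E/\fcyc))$. The only difference is that the paper then simply invokes \cite[Lemma~A.1.5]{C-S}, whereas you reprove that lemma directly by identifying $H_1(\Gamma,X)$ with $X^\Gamma=X[T]$ via the resolution $0\to\lag\stackrel{T}{\to}\lag\to\Zp\to0$ and applying the pro-$p$ fixed-point argument to a putative nonzero finite submodule.
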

\begin{proof}
 The hypotheses implies that
 $$H_1(\Gamma, X(E/\fcyc))=0$$
 by duality. The result now follows from \cite[Lemma~A.1.5]{C-S}.
\end{proof}

\begin{remark}
In \cite[Proposition~4.1.1]{Gr}, Greenberg has given sufficient conditions for  $X(E/\fcyc)$ to contain no non-trivial finite sub-module. It is thus possible to obtain an alternative proof to Theorem~\ref{thm:notorsion} under different hypotheses. Indeed, our hypothesis on  the surjectivity of $\lambda_\cyc$ implies  the condition labelled $\mathrm{CRK}(\mathcal{D,L})$ in loc. cit., which says that 
$$ 
\rank_\Lambda \widehat{\sel_p(E/\fcyc)}+\rank_\Lambda\bigoplus_{v\in S}  \widehat{J_v(E/\fcyc)}=\rank_\Lambda\widehat{H^1(F_S/\fcyc, E_{p^{\infty}})}.
$$
The hypothesis $\mathrm{RFX}(\mathcal{D})$, which asserts that $T_p(E)\otimes \Lambda$ is a reflexive $\Lambda$-module, holds unconditionally. Under  the weak Leopoldt conjecture, i.e. $H^2(F_S/\fcyc,\Ep)=0$,   Greenberg's hypothesis $\mathrm{LEO(\mathcal{D}})$, which says that the kernel of the localization map $$H^2(F_S/\fcyc,\Ep)\rightarrow \bigoplus_{v\in\Sigma,w|v}H^2(F_{\cyc,w},\Ep)$$ is $\Lambda$-cotorsion, holds trivially.  Therefore, \cite[Proposition~4.1.1]{Gr} applies if we further assume
\begin{itemize}
\item $\mathrm{LOC}_v^{(1)}(\mathcal{D})$ holds for some non-archimedian prime $v$, that is, $(T_p(E)\otimes\Lambda)^{G_{F_v}}=0$;
\item $\mathrm{LOC}_v^{(2)}(\mathcal{D})$ holds for all $v\in S$, that is, the $\Lambda$-module $T_p(E)\otimes\Lambda/(T_p(E)\otimes\Lambda)^{G_{F_v}}$ is reflexive;
\item One of the following  conditions holds:
\begin{enumerate}[(a)]
\item $E[p]$ has no quotient isomorphic to $\mu_p$ as a $G_F$-module;
\item  There exists $v\in S$ such that $\mathrm{LOC}_v^{(1)}(\mathcal{D})$ holds and $\widehat{J_v(E/\fcyc)}$ is  a reflexive $\Lambda$-module.
\end{enumerate}
\end{itemize}
\end{remark}

\section{Modular forms}

Let $f=\sum a_n q^n\in S_k(\Gamma_0(N),\epsilon)$ be a normalized new cuspidal modular eigenform of level $N$. 
Assume that $(p,N)=1$.
 Let $E $ be the subfield of the complex numbers obtained by adjoining the Fourier coefficients of $f$ to $\Q$. 
It is well known that $E$ is a number field, and we denote its ring of integers by ${\mathcal O}.$  For each prime $l$, let
$$
{\mathcal O}_l ={\mathcal O } \otimes \Zl , \quad E_l = E\otimes {\mathbb Q}_l
$$
so that  ${\mathcal O}_l$ is the product of the completions  ${\mathcal O}_{\lambda}$ of ${\mathcal O}$ at the primes $\lambda$ of ${\mathcal O}$ dividing $l$.
Similarly $E_l$ is the product of the corresponding completions $E_{\lambda}$. 
By the results of Eichler, Shimura, Deligne, there is a system of $l$-adic  Galois representation
\begin{equation}\label{rhof}
\rho_f : G_{\Q} \longrightarrow  \GL_2({\mathcal O}_l) \subset \GL_2(E_l) 
\end{equation}
of $G_\QQ$ attached to $f$ (see \cite{D}, also  \cite{ribet} and \cite[\S8.3]{kato}). For simplicity, we denote by $V_f$ the corresponding 2-dimensional representation over $E_{\lambda}$, where $\lambda$ is a fixed prime of $E$ above $p$, and we fix a two dimensional lattice $T_f$ of $V_f$ so that $T_f$ is a free  ${\mathcal O}_{\lambda}$-module of rank two.  The representation is crystalline, with Hodge-Tate weights 0 and  $1-k.$  We assume that  $V_f$ is non-ordinary, that is $a_p\in\lambda$.

Fix an integer 
$j \in [1, k-1]$ and write $T=T_f(j)$, $V=V_f(j)$.
The corresponding divisible module $V/T$ is denoted by $A$, and $A$ is isomorphic to two copies of 
$E_{\lambda}/{\mathcal O}_{\lambda}$ equipped with a $G_{\Q}$-action. There is a tautological short exact sequence
\begin{equation}\label{eq:TVA}
0\rightarrow T\stackrel{\iota}{\longrightarrow} V\stackrel{\pi}{\longrightarrow}A\rightarrow 0.
\end{equation}
Given a number field $L$ and $v$ a place of $L$, we recall that Bloch and Kato \cite{bk} defined the canonical subgroups
\[
H^1_f(L_v,V):=
\begin{cases}
\ker\left(H^1(L_v,V)\rightarrow H^1(L_v,V\otimes \mathbb{B}_{\rm cris})\right)&v| p,\\
\ker\left(H^1(L_v,V)\rightarrow H^1(L_v^{\mathrm ur}, V) \right)&v\nmid p,
\end{cases}
\]
where $L_v^{\mathrm ur}$ denotes the maximal unramified extension of $L_v$ and
\begin{align*}
H^1_f(L_v,T)&:=\iota^*\left(H^1_f(L_v,V)\right);\\
H^1_f(L_v,A)&:=\pi_*\left(H^1_f(L_v,V)\right).
\end{align*}

If $q$ is a rational prime, we write
\[
J_q(A/L)=\bigoplus_{v|q}\frac{H^1(L_v,A)}{H^1_f(L_v,A)},
\]
where the direct sum is taken over all primes of $L$ over $q$. If $\cL$ is an infinite extension of $\QQ$, we define 
\[
J_q(A/\cL)=\ilim J_q(A/L),
\]
where the inductive limit is taken over number fields $L$ that are contained inside $\cL$.

Let $S$ be the set of primes dividing $pN$.
 We have the Selmer group sitting inside the following exact sequence
 \begin{equation}\label{selLf}
 0 \to \sel (A/{\mathcal L}) \to H^1(\cL_S/{\cL},A)  \overset{\lambda_{\mathcal L}}{\longrightarrow}  \underset{v\in S}{\bigoplus}
 J_v(A/{\cL}).
 \end{equation}
 
 Fix a number field $F$ and let $F_\cyc$ be the cyclotomic $\Zp$-extension of $F$ as before.
Throughout, we assume that the analogue of \eqref{WLC} holds for  $A$ over $F$,  namely
\begin{equation}
H^2(F_S/F, A)=0.\label{WLCf}
\end{equation}
Similar to \eqref{fund}, we have the following fundamental diagram:
\begin{equation}\label{fundf}
\begin{CD}
0 @>>>  \sel(A/\fcyc)^{\Gamma} @>>> H^1(F_S/\fcyc, A)^{\Gamma} @>{{\lambda}^{\Gamma}_{\cyc}}>> \underset{v\in S}{\bigoplus}  J_v(A/\fcyc)^{\Gamma}\\
& & @A{\alpha}AA                                    @A{\beta}AA                                                       @A{\gamma=\oplus \gamma_v}AA\\
0 @>>> \sel(A/F) @>>> H^1(F_S/F,A) @>{\lambda}>> \underset{v \in S}{\bigoplus} J_v(A/F) .
\end{CD}
\end{equation}
As in \eqref{delta}, this diagram gives rise to a natural map
\begin{equation}\label{deltaf}
\delta : \ker \gamma \to \coker\,\lambda .
\end{equation}

\begin{lemma}
In the diagram~\eqref{fundf} above, the map $\beta$ is surjective and has finite kernel.
\end{lemma}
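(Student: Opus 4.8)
The plan is to reproduce the proof of Lemma~\ref{lem:beta} essentially verbatim, since that argument used only formal properties of the coefficient module which $A$ shares with $E_{p^\infty}$. First I would feed the tower $F \subset \fcyc \subset F_S$ into the Hochschild--Serre spectral sequence
\[
H^i(\Gamma, H^j(F_S/\fcyc, A)) \Longrightarrow H^{i+j}(F_S/F, A),
\]
and use that $\Gamma \cong \Zp$ has $p$-cohomological dimension one, so that $H^i(\Gamma, -) = 0$ for all $i \geq 2$. The associated five-term exact sequence then collapses to
\[
0 \to H^1(\Gamma, H^0(F_S/\fcyc, A)) \to H^1(F_S/F, A) \longby{\beta} H^1(F_S/\fcyc, A)^\Gamma \to 0.
\]
This single display does all the structural work: the term $H^2(\Gamma, H^0(F_S/\fcyc, A))$ that would obstruct surjectivity of $\beta$ vanishes by the dimension bound, and the kernel is identified as $\ker\beta \cong H^1(\Gamma, H^0(F_S/\fcyc, A))$.

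It then remains to check that this kernel is finite. Since $\Gamma$ is topologically cyclic, $H^1(\Gamma, M) \cong M_\Gamma$ for every discrete $\Gamma$-module $M$, so it suffices to prove that $H^0(F_S/\fcyc, A) = A^{G_\fcyc}$ is finite. I would reduce this to the vanishing $V^{G_\fcyc} = 0$ as follows: the module $A^{G_\fcyc}$ is a cofinitely generated $\cO_\lambda$-module whose maximal divisible subgroup has $\cO_\lambda$-corank equal to $\dim_{E_\lambda} V^{G_\fcyc}$, so the vanishing $V^{G_\fcyc} = 0$ forces $A^{G_\fcyc}$ to have trivial divisible part and hence to be finite.

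The finiteness of $A^{G_\fcyc}$, i.e. the assertion $V^{G_\fcyc} = 0$, is the only genuine obstacle, and it is precisely the analogue for the modular module $A$ of Imai's theorem \cite{I} invoked in Lemma~\ref{lem:beta}. I would establish it by passing to a prime $v \mid p$: a nonzero global invariant would localize to a nonzero $G_{\fcyc,w}$-fixed vector at some $w \mid v$, and since $F_{\cyc,w}/F_v$ is a $\Zp$-extension, the $G_{F_v}$-orbit of such a vector spans a one-dimensional $G_{F_v}$-stable subspace of $V$ on which $G_{F_v}$ acts through the procyclic quotient $\Gamma_v$. This contradicts the non-ordinarity hypothesis $a_p \in \lambda$, which forces $V = V_f(j)$ to be irreducible as a representation of $G_{F_v}$ and hence to admit no such line. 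Granting this finiteness, the remaining bookkeeping is identical to the elliptic-curve case, and the lemma follows; I expect the entire difficulty to be concentrated in pinning down the correct generalisation of Imai's finiteness statement, everything else being formal.
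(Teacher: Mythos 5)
Your formal skeleton is exactly the paper's: the surjectivity of $\beta$ and the identification $\ker\beta\cong H^1(\Gamma,A^{G_{\fcyc}})$ follow from Hochschild--Serre together with $\mathrm{cd}_p(\Gamma)=1$, and the whole content of the lemma is the finiteness of $A^{G_{\fcyc}}$, i.e.\ the analogue of Imai's theorem for $A$. The paper does not reprove this fact: it simply quotes \cite[Lemma~2.2]{S-MF} for the finiteness of $\ker\beta$. You instead attempt a self-contained proof of the Imai analogue, and that is where there is a genuine gap.

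The gap is in the final step. From a nonzero $G_{F_{\cyc,w}}$-fixed vector you assert that ``the $G_{F_v}$-orbit of such a vector spans a one-dimensional $G_{F_v}$-stable subspace''. There is no reason for that span to be a line. What is true is that $V^{G_{F_{\cyc,w}}}$ is $G_{F_v}$-stable (since $G_{F_{\cyc,w}}$ is normal in $G_{F_v}$); if it is a \emph{proper} nonzero subspace, it contradicts irreducibility of $V|_{G_{F_v}}$, but your argument says nothing about the remaining case $V^{G_{F_{\cyc,w}}}=V$, i.e.\ the case where the local representation factors through the procyclic quotient $\Gamma_v$. Plain irreducibility over $E_\lambda$ does \emph{not} exclude this case: a two-dimensional representation over a non--algebraically closed field can be irreducible with abelian image (the image lying in $K^\times$ for a quadratic extension $K/E_\lambda$ embedded in $M_2(E_\lambda)$), so ``irreducible as a representation of $G_{F_v}$'' is not enough to conclude. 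The repair is to use \emph{absolute} irreducibility: the weak-admissibility argument (a rank-one crystalline subrepresentation of $V|_{G_{F_v}}$ gives a $\varphi$-stable line in $D_{\mathrm{cris}}$ whose slope equals a Hodge filtration jump in $\{0,k-1\}$, forcing $v_p(a_p)=0$) in fact yields irreducibility after any finite extension of scalars, and it works over any finite $F_v/\Qp$ because slopes and Hodge jumps are insensitive to finite base change; Schur's lemma then rules out the abelian-image case. Note finally that this key input --- non-ordinarity implies local (absolute) irreducibility --- is itself a nontrivial fact which you assert without proof; in a written version you would have to prove or cite it, at which point citing \cite[Lemma~2.2]{S-MF}, as the paper does, is the shorter and safer route.
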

\begin{proof}
As in Lemma~\ref{lem:beta}, the surjectivity is a consequence of Hochschild-Serre. The finiteness of $\ker\beta$ is proven in  \cite[Lemma~2.2]{S-MF}.
\end{proof}

Below is a generalization of Lemma~\ref{kerg}.

\begin{lemma}\label{kergf}
 In the diagram \eqref{fundf} above, the map $\gamma$ is surjective. Furthermore, $J_p(A/F)\subset \ker\gamma$ and the containment is of finite index.
\end{lemma}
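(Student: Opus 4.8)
The plan is to imitate the proof of Lemma~\ref{kerg} essentially line by line, decomposing $\gamma=\bigoplus_{q\in S}\gamma_q$ over the rational primes $q\in S$ and treating the primes $q\neq p$ and the prime $q=p$ separately. For $q\neq p$ the point is that $J_q(A/F)=\bigoplus_{v\mid q}H^1(F_v,A)/H^1_f(F_v,A)$ is \emph{finite}: for such $v$ the subgroup $H^1_f(F_v,A)$ is the unramified part, and the singular quotient is finite, being (by local Tate duality) dual to a finite Bloch--Kato group; this is the modular-form analogue of \cite[Lemma~1.11]{C-S}. Granting finiteness, the surjectivity of $\gamma_q$ and the finiteness of $\ker\gamma_q$ follow from the Hochschild--Serre spectral sequence together with $\mathrm{cd}_p(\Gamma)=1$, exactly as in the case $v\nmid p$ of Lemma~\ref{kerg}.

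The heart of the matter is the prime $q=p$, where I would prove the vanishing
\[
J_p(A/\fcyc)=0,\qquad\text{i.e.}\qquad H^1(F_{\cyc,w},A)=H^1_f(F_{\cyc,w},A)\ \text{for every }w\mid p.
\]
This is the exact analogue of the identity $H^1(F_{\cyc,w},E)(p)=0$ used in Lemma~\ref{kerg}. As there, the cleanest route is to pass to Pontryagin duals: the dual of $H^1(F_{\cyc,w},A)/H^1_f(F_{\cyc,w},A)$ is the module of local universal norms $\varprojlim_n H^1_f(F_{n,w},T^{\dagger})$ attached to the Tate-dual lattice $T^{\dagger}$ along the cyclotomic tower $(F_n)_n$, and the task is to show that this inverse limit vanishes. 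Since $F_{\cyc,w}$ is deeply ramified, this is the crystalline, non-ordinary counterpart of the Coates--Greenberg vanishing \cite[Proposition~4.3]{C-G} and of Schneider's triviality of universal norms \cite[Theorem~2]{Sc}. The non-ordinarity hypothesis $a_p\in\lambda$ is precisely what guarantees that $V$, and hence $A$, has no unramified subquotient at $p$, which is what forces the universal norms to die; concretely one can argue through the Wach-module / Perrin-Riou--Coleman description of the crystalline local condition over the cyclotomic tower, or simply invoke the corresponding local statement in \cite{S-MF}. I expect \textbf{this vanishing to be the main obstacle}: for elliptic curves it was a black box supplied by \cite{C-G,Sc}, whereas for a general crystalline non-ordinary modular form it genuinely requires the deeply-ramified local Iwasawa theory of $V$.

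Finally, granting $J_p(A/\fcyc)=0$, the map $\gamma_p\colon J_p(A/F)\to J_p(A/\fcyc)^{\Gamma}=0$ is trivially surjective with $\ker\gamma_p=J_p(A/F)$. Combining this with the primes $q\neq p$ shows that $\gamma=\bigoplus_q\gamma_q$ is surjective, and that
\[
\ker\gamma=J_p(A/F)\ \oplus\ \bigoplus_{q\in S,\,q\neq p}\ker\gamma_q,
\]
in which each summand with $q\neq p$ is finite. Hence $J_p(A/F)\subseteq\ker\gamma$ with finite index, as asserted.
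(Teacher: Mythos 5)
Your treatment of the place above $p$ is essentially the paper's own argument: the paper likewise passes to Pontryagin duals, identifies the dual of $J_p(A/\fcyc)$ with a module of universal norms along the cyclotomic tower, and closes the point by citing the vanishing theorem of Perrin-Riou \cite{PR} and Berger \cite{berger} for crystalline non-ordinary representations --- this, rather than \cite{S-MF}, is where the ``crystalline counterpart of Coates--Greenberg/Schneider'' that you ask for actually lives. So the step you flag as the main obstacle is indeed the crux, and it is settled by existing literature; your insistence on the Tate-dual lattice $T^{\dagger}=T^*(1)$ is correct (and a bit more careful than the paper's wording), and harmless, since $T^*(1)$ is again a crystalline non-ordinary twist of a modular lattice, so the cited results apply to it.

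The genuine gap is at the primes $q\neq p$. You assert that $J_q(A/F)$ is finite, ``being (by local Tate duality) dual to a finite Bloch--Kato group'': this is circular, since that dual \emph{is} the Bloch--Kato group $H^1_f(F_v,T^*(1))$ and its finiteness is precisely what needs proof; moreover, the assertion is false in general. The $\cO_\lambda$-corank of $J_v(A/F)$ equals the $\cO_\lambda$-rank of $H^1_f(F_v,T^*(1))$, which is $\dim H^0(F_v,V^*(1))$, and the proof of \cite[Lemma~1.11]{C-S} does not transfer: for an elliptic curve one has $H^0(F_v,V)=(\varprojlim E_{p^n}(F_v))\otimes\Qp=0$ because $E_{p^\infty}(F_v)$ is finite, but for weight $k\ge 4$ there are newforms, Steinberg at the rational prime below $v$, for which (with a suitable Atkin--Lehner sign and a suitable twist $j$ in the allowed range, or after an unramified base change, which is permitted since $F$ is arbitrary) the inertia-invariant line of $V^*(1)$ is the trivial character of $G_{F_v}$, so that $H^0(F_v,V^*(1))\neq 0$ and $J_v(A/F)$ is infinite. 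Your claim can be rescued under the standing hypothesis \eqref{WLCf}: by Poitou--Tate and the global irreducibility of $V^*(1)$, the vanishing $H^2(F_S/F,A)=0$ forces $H^2(F_v,A)=0$, equivalently $H^0(F_v,V^*(1))=0$, for every $v\in S$; but you give no such argument. A second, smaller defect: surjectivity of $\gamma_q$ does not follow ``exactly as in Lemma~\ref{kerg}'', because there the local condition over $\fcyc$ at $w\nmid p$ vanishes ($E(F_{\cyc,w})\otimes\Qp/\Zp=0$), whereas here $\varinjlim_n H^1_f(F_{n,w},A)$ need not vanish, so the Hochschild--Serre argument must be supplemented by control of $H^1(\Gamma_v,\varinjlim_n H^1_f(F_{n,w},A))$. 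The paper sidesteps both points by proving only what the lemma requires --- the finiteness of $\ker\gamma_v$ for $v\nmid p$ --- which it imports from \cite[proof of Proposition~3.3]{H-L}.
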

\begin{proof}
For $v\nmid p$, the finiteness of $\ker\gamma_v$ is studied in \cite[proof of Proposition~3.3]{H-L}.
 If $w|p$ is a place of $\fcyc$, Perrin-Riou and Berger  showed that  $\displaystyle\varprojlim_{L\subset F_{\cyc,w}} H^1_f(L,T)=0$ (see \cite{PR,berger}), which generalizes the vanishing of the universal norm for supersingular elliptic curves. Therefore, if we replace $\hat E(L)$ by $H^1_f(L,T)$, the proof of Lemma~\ref{kerg} generalizes to the setting of modular forms.
\end{proof}

The analogue of Proposition~\ref{prop:equiv} holds.
\begin{proposition}\label{prop:equif}
Suppose that $\lambda_\cyc$ in \eqref{selLf}  (with $\mathcal{L}=\fcyc$) is surjective. If $H^2(F_S/F,A)=0$, then the following are equivalent:
\begin{enumerate}[(i)]
\item $\lambda_\cyc^\Gamma$ in \eqref{fundf} is surjective;
\item $H^1(\Gamma,\sel(A/F_\cyc))=0$;
\item The map $\delta$ in \eqref{deltaf} is surjective.
\end{enumerate}
\end{proposition}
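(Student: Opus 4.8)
The plan is to transcribe the proof of Proposition~\ref{prop:equiv} essentially word for word, replacing $E_{p^\infty}$ by $A$ and $\sel_p$ by $\sel$ throughout, since every structural ingredient used there is now available for $A$: the preceding lemma supplies the surjectivity of $\beta$, and Lemma~\ref{kergf} supplies both the surjectivity of $\gamma$ and the vanishing $\coker\gamma=0$. Because the argument is purely formal, the modular-form case should go through without any genuinely new idea.

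First I would observe that the hypothesis $H^2(F_S/F,A)=0$ together with Hochschild--Serre forces
\[
H^1\!\left(\Gamma, H^1(F_S/\fcyc,A)\right)=0,
\]
the point being that $\Gamma=\Gal(\fcyc/F)$ has $p$-cohomological dimension one (compare \cite[Equation (33)]{C-S}). I would then feed this vanishing into the long exact $\Gamma$-cohomology sequence attached to
\[
0\to \sel(A/\fcyc)\to H^1(F_S/\fcyc,A)\to \bigoplus_{v\in S}J_v(A/\fcyc)\to 0,
\]
which is right-exact precisely because $\lambda_\cyc$ is assumed surjective. This identifies $\coker\big(\lambda_\cyc^\Gamma\big)$ with $H^1(\Gamma,\sel(A/\fcyc))$, yielding the equivalence of (i) and (ii) immediately.

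For (i)$\Rightarrow$(iii) I would write down the commutative square relating $\lambda$ to $\lambda_\cyc^\Gamma$ through $\beta$ and $\gamma$, exactly as in \eqref{eq:diagramdelta}. Surjectivity of $\beta$ and $\gamma$, combined with (i), makes $\gamma\circ\lambda=\lambda_\cyc^\Gamma\circ\beta$ surjective; a diagram chase then produces the exact sequence
\[
\ker\gamma\stackrel{\delta}{\longrightarrow}\coker\lambda\longrightarrow\coker(\gamma\circ\lambda)\longrightarrow\coker\gamma=0,
\]
the last vanishing coming from Lemma~\ref{kergf}, so that $\delta$ is surjective. For (iii)$\Rightarrow$(i) I would run the implication backwards: surjectivity of $\delta$ and of $\gamma$ makes $\gamma\circ\lambda$ surjective, and the same square forces $\lambda_\cyc^\Gamma$ to be surjective.

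I do not expect a serious obstacle. The only place demanding any care is the Hochschild--Serre step, where one should confirm that the coefficient module $A$, being two copies of $E_\lambda/\mathcal{O}_\lambda$ with $G_\Q$-action, behaves formally like $E_{p^\infty}$; this is immediate, since the argument uses only $\mathrm{cd}_p(\Gamma)=1$ and nothing finer about the coefficients. Thus the proof is a faithful translation of Proposition~\ref{prop:equiv}, with the elliptic-curve lemmas replaced by their modular-form counterparts.
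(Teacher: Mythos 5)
Your proposal is correct and is exactly what the paper intends: the paper gives no separate proof of Proposition~\ref{prop:equif}, stating only that it is the analogue of Proposition~\ref{prop:equiv}, and your word-for-word translation (Hochschild--Serre plus $\mathrm{cd}_p(\Gamma)=1$ for the equivalence of (i) and (ii), then the commutative square with surjective $\beta$ and $\gamma$ and the exact sequence ending in $\coker\gamma=0$ for the equivalence with (iii)) is precisely that argument, with the surjectivity of $\beta$ and $\gamma$ supplied by the modular-form lemmas as you indicate.
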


We prove a sufficient condition for the equivalent conditions in Proposition~\ref{prop:equif}. 
\begin{lemma}\label{delsurjf}
Let $T^*=\Hom(T,\cO_\lambda)$ denote the linear dual of $T$ and define
\[
\Sha_p(T^*(1)/F):=\ker \left(H^1(F,T^*(1))\rightarrow\prod_{v|p} H^1(F_v,T^*(1))\right).
\]
Suppose that the the following hypotheses hold for $f$ and $F$.
\begin{enumerate}[i)]
\item $\Sha_p(T^*(1)/F)=0$.
\item $H^2(F_S/F,A)=0$.
\end{enumerate}
Then the map $\delta$ in \eqref{deltaf}  is surjective.
\end{lemma}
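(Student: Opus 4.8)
The plan is to mimic the proof of Corollary~\ref{cor:injectdeltadual}, passing to Pontryagin duals and reducing the surjectivity of $\delta$ to an injectivity statement that is read off directly from hypothesis (i). The map $\delta$ in \eqref{deltaf} is the natural composite $\ker\gamma\hookrightarrow\bigoplus_{v\in S}J_v(A/F)\twoheadrightarrow\coker\lambda$, so its Pontryagin dual $\widehat\delta$ is the composite $\widehat{\coker\lambda}\hookrightarrow\bigoplus_{v\in S}\widehat{J_v(A/F)}\twoheadrightarrow\widehat{\ker\gamma}$, and by duality $\delta$ is surjective if and only if $\widehat\delta$ is injective. Thus the whole argument reduces to identifying the duals $\widehat{J_v(A/F)}$, $\widehat{\ker\gamma}$ and $\widehat{\coker\lambda}$ in terms of the dual representation $T^*(1)$, and then checking injectivity.

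First I would treat the local duals. By local Tate duality the pairing $H^1(F_w,A)\times H^1(F_w,T^*(1))\to E_\lambda/\cO_\lambda$ is perfect, and by the self-duality of the Bloch--Kato subspaces \cite{bk} the exact annihilator of $H^1_f(F_w,A)$ is $H^1_f(F_w,T^*(1))$. Hence $\widehat{J_v(A/F)}=\bigoplus_{w|v}H^1_f(F_w,T^*(1))$ for every $v\in S$. Combined with Lemma~\ref{kergf}, whose proof (mirroring Lemma~\ref{kerg}) gives $\ker\gamma_v=J_v(A/F)$ for $v\mid p$ and $\ker\gamma_v$ finite for $v\nmid p$, this shows that on the $v\mid p$ components the projection $\bigoplus_{v\in S}\widehat{J_v(A/F)}\to\widehat{\ker\gamma}$ is the identity onto $\bigoplus_{v|p}\bigoplus_{w|v}H^1_f(F_w,T^*(1))$.

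Next I would identify $\widehat{\coker\lambda}$ globally. Applying the Poitou--Tate exact sequence to the Selmer system defining $\sel(A/F)$ and its dual system for $T^*(1)$, and using that $\Sha^2(A/F)=\ker\bigl(H^2(F_S/F,A)\to\bigoplus_v H^2(F_v,A)\bigr)$ vanishes by hypothesis (ii), produces an isomorphism $\coker\lambda\cong \sel_f(T^*(1)/F)^\vee$, where $\sel_f(T^*(1)/F)$ is the Bloch--Kato Selmer group cut out by the conditions $H^1_f(F_v,T^*(1))$ at $v\in S$. Dualizing, $\widehat{\coker\lambda}\cong \sel_f(T^*(1)/F)$, embedded into $\bigoplus_{v\in S}\widehat{J_v(A/F)}$ by localization. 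Under these identifications $\widehat\delta$ is exactly the localization map $\sel_f(T^*(1)/F)\to\bigoplus_{v|p}\bigoplus_{w|v}H^1_f(F_w,T^*(1))\oplus\bigoplus_{v\nmid p}\widehat{\ker\gamma_v}$.

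Finally, injectivity is then immediate: a class $x\in\sel_f(T^*(1)/F)\subseteq H^1(F,T^*(1))$ killed by $\widehat\delta$ has trivial localization at every $w\mid p$, so it lies in $\Sha_p(T^*(1)/F)$, which is zero by hypothesis (i); hence $\widehat\delta$ is injective and $\delta$ is surjective. The step requiring the most care, and the main obstacle, is the global identification $\coker\lambda\cong\sel_f(T^*(1)/F)^\vee$: one must set up the correct Poitou--Tate sequence for the non-self-dual module $A$, verify that the Bloch--Kato local conditions for $A$ and for $T^*(1)$ are genuine orthogonal complements at the bad primes $v\mid N$ as well as at $v\mid p$, and confirm that the relevant connecting map is the localization map itself. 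The hypothesis $H^2(F_S/F,A)=0$ is precisely what kills the $\Sha^2$ obstruction and upgrades this identification to an isomorphism.
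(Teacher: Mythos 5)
Your proof is correct and takes essentially the same approach as the paper: dualize $\delta$, identify $\widehat{\coker\lambda}$ with the Bloch--Kato Selmer group $H^1_f(F,T^*(1))$ via the Poitou--Tate sequence and hypothesis (ii), identify the duals of the local quotients at $v\mid p$ with $H^1_f(F_v,T^*(1))$ via local Tate duality and \cite[Proposition~3.8]{bk}, and deduce injectivity of $\widehat{\delta}$ from hypothesis (i) together with Lemma~\ref{kergf}. The ``main obstacle'' you flag (the global identification of $\widehat{\coker\lambda}$) is exactly the step the paper disposes of in one line by citing the Poitou--Tate exact sequence together with \eqref{WLCf}.
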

\begin{proof}
On the one hand, the Poitou-Tate exact sequence and \eqref{WLCf} imply that $$\widehat{\coker \lambda}\cong H^1_f(F,T^*(1)).$$ On the other hand, \cite[Proposition~3.8]{bk} together with local Tate duality give
\[
\widehat{\frac{H^1(F_v,A)}{H^1_f(F_v,A)}}\cong H^1_f(F_v,T^*(1))
\]
for all places $v|p$.
Hypothesis (i) implies that the localization map
\[
H^1_f(F,T^*(1))\rightarrow \prod_{v|p}H^1_f(F_v,T^*(1))
\]
is injective. In particular, Lemma~\ref{kergf} implies that
\[
\widehat{\delta}:\widehat{\coker\lambda}\rightarrow\widehat{\ker\gamma}= \widehat{J_p(A/F)}\oplus\bigoplus_{v\in S\setminus\{p\}}\widehat{\ker\gamma_v} 
\]
is injective. Hence, $\delta$ is surjective.
\end{proof}
Finally, we conclude this section with the following analogue of Theorem~\ref{thm:notorsion}.
\begin{theorem}\label{thm:MF}
Let $f$ be a normalized new cuspidal modular eigenform of level $N$ with $p\nmid N$. Suppose that the following hypotheses are satisifed:
\begin{enumerate}[i)]
\item $f$ is non-ordinary at $p$.
\item The map $\lambda_\cyc$ in \eqref{selLf} (with $\mathcal{L}=\fcyc$) is surjective.
\item $H^2(F_S/F, A)=0.$
\item The equivalent conditions in Proposition~\ref{prop:equif} hold.
\end{enumerate}
Then the  $\Lambda(\Gamma)$-module $ \widehat{\sel(A/\fcyc)}$ admits no non-trivial finite sub-module.
\end{theorem}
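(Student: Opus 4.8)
The statement is the exact modular-form analogue of Theorem~\ref{thm:notorsion}, so the strategy is to mirror the proof of that theorem verbatim, substituting the modular-form versions of each ingredient. The plan is to deduce the vanishing of the homology group $H_1(\Gamma,\widehat{\sel(A/\fcyc)})$ from the hypotheses, and then invoke the algebraic criterion \cite[Lemma~A.1.5]{C-S} to conclude that the dual Selmer module has no non-trivial finite $\Lambda(\Gamma)$-submodule.

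The first step is to translate cohomology into homology by Pontryagin duality. Recall that $\sel(A/\fcyc)$ is a discrete, cofinitely generated $\Lambda(\Gamma)$-module, so its Pontryagin dual $\widehat{\sel(A/\fcyc)}$ is a finitely generated $\Lambda(\Gamma)$-module. The functor taking a discrete $\Gamma$-module $M$ to $H^1(\Gamma,M)$ is dual to the functor $H_1(\Gamma,-)$ applied to $\widehat{M}$, since $\Gamma\cong\Zp$ has $p$-cohomological dimension one; concretely, $\widehat{H^1(\Gamma,\sel(A/\fcyc))}\cong H_1(\Gamma,\widehat{\sel(A/\fcyc)})$. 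Thus the vanishing
\[
H_1(\Gamma,\widehat{\sel(A/\fcyc)})=0
\]
is equivalent to $H^1(\Gamma,\sel(A/\fcyc))=0$.

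The second step supplies exactly that cohomological vanishing. Hypotheses (ii) and (iii), namely the surjectivity of $\lambda_\cyc$ and $H^2(F_S/F,A)=0$, are precisely the standing assumptions of Proposition~\ref{prop:equif}, and hypothesis (iv) asserts that the equivalent conditions of that proposition hold. Condition~(ii) of Proposition~\ref{prop:equif} is the statement $H^1(\Gamma,\sel(A/F_\cyc))=0$, which is therefore available. Combining this with the duality of the first step gives $H_1(\Gamma,\widehat{\sel(A/\fcyc)})=0$, and \cite[Lemma~A.1.5]{C-S} then yields the conclusion. I should note that hypothesis (i), non-ordinarity at $p$, is what guarantees the supersingular-type input behind Lemma~\ref{kergf} (the vanishing of the Perrin-Riou/Berger universal norm $\varprojlim_L H^1_f(L,T)=0$) and the finiteness of $\ker\beta$; these feed into Proposition~\ref{prop:equif} and are used implicitly when invoking hypothesis (iv).

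There is essentially no obstacle here beyond bookkeeping: every genuinely substantive input has already been established upstream, and this theorem is a packaging result. The one point deserving care is the clean identification of $H_1(\Gamma,-)$ with the dual of $H^1(\Gamma,-)$, which relies on $\Gamma$ having cohomological dimension one so that no higher terms intrude; this is the same duality argument used in Theorem~\ref{thm:notorsion}, and it transfers without modification to the $A$-setting.
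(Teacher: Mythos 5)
Your proposal is correct and takes essentially the same route as the paper: the paper gives no separate proof of Theorem~\ref{thm:MF}, presenting it as the analogue of Theorem~\ref{thm:notorsion}, whose proof is exactly your argument --- hypothesis (iv) supplies $H^1(\Gamma,\sel(A/\fcyc))=0$ via condition (ii) of Proposition~\ref{prop:equif}, Pontryagin duality converts this into $H_1(\Gamma,\widehat{\sel(A/\fcyc)})=0$, and \cite[Lemma~A.1.5]{C-S} then rules out any non-trivial finite $\Lambda(\Gamma)$-submodule.
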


\section{Signed Selmer groups}\label{S:pm}
Let $F$ be a number field and $F'$ a subfield of $F$. Let  $E/F'$ be an elliptic curve with good reduction at all primes above $p$. We write $\sss$ be the set of primes of $F'$ lying above $p$ where $E$ has supersingular reduction. Assume that $\sss\ne\emptyset$. Furthermore, for all  $v\in\sss$, assume  that 
\begin{enumerate}[(i)]
\item $F_v'=\Qp$;
\item $a_v=1+p-\# \tilde E(F'_v)=0$;
\item $v$ is unramified in $F$.
\end{enumerate}

The condition (i) is necessary to apply Honda's theory of formal group to define the plus and minus conditions on $E$ (c.f. \cite[\S3.1]{K-O}).

Recall that $\fcyc$ denotes the cyclotomic $\Zp$-extension of $F$. For each integer $n\ge0$, let $F_n$ denote the sub-extension of $\fcyc/F$ such that $F_n/F$ is a cyclic extension of degree $p^n$. Let $\ssF$ denote the set of primes of $F$ lying above those inside $\sss$. For each $v\in\ssF$,  define
\begin{align}
E^+(F_{n,v})&=\{P\in \hat E(F_{n,v})\big|\Tr_{n/m+1}P\in \hat E(F_{m,v})\, \forall \text{ even }m,0\le m\le n-1\},\label{def+}\\
E^-(F_{n,v})&=\{P\in \hat E(F_{n,v})\big|\Tr_{n/m+1}P\in \hat E(F_{m,v})\, \forall \text{ odd }m,0\le m\le n-1\},\label{defn-}
\end{align}
where $\Tr_{n/m+1}$ is the trace map from $\hat E(F_{n,v})$ to $\hat E(F_{m+1,v})$. By an abuse of notation, we have denoted the unique prime of $F_n$ lying above $v$ by $v$ as well. The plus and minus Selmer groups over $F_n$ are defined by
\[
\sel_p^\pm(E/F_n)=\ker\left(\sel_p(E/F_n)\rightarrow\bigoplus_{v\in\ssF}\frac{H^1(F_{n,v},E_{p^\infty})}{E^\pm(F_{n,v})\otimes \Qp/\Zp}\right),
\]
where $E^\pm(F_{n,v})\otimes \Qp/\Zp$ are identified with their images inside $H^1(F_{n,v},E_{p^\infty})$ via the Kummer map. By \cite[Proposition~3.32]{K-O}, there is an isomorphism of $\Lambda(\Gamma)$-modules
\begin{equation}\label{eq:quotientpm}
\varinjlim_n\frac{H^1(F_{n,v},E_{p^\infty})}{E^\pm(F_{n,v})\otimes \Qp/\Zp}\cong
\WH{\Lambda(\Gamma)}^{\oplus[F_v:\Qp]} 
\end{equation}
for all $v\in \ssF$.

\begin{remark}\label{rk:SameSel}
When $n=0$, the $\pm$-Selmer groups $\sel_p^\pm(E/F)$ coincide with $\sel_p(E/F)$ as $E^\pm(F_v)=\hat E(F_v)$.
\end{remark}
The plus and minus Selmer groups over $\fcyc$ are defined by
\[
\sel_p^\pm(E/\fcyc)=\varinjlim_n \sel_p^\pm(E/F_n).
\]
The Pontryagin dual of  $\sel_p^\pm(E/\fcyc)$ will be denoted by $X^\pm(E/\fcyc)$.

For any $n\ge 0$ and a prime $v\in\ssF$, define
\[
J_v^\pm(E/F_n) = \bigoplus_{w\mid v}\frac{H^1(F_{n,w},E_{p^\infty})}{E^\pm(F_{n,w})\otimes \Qp/\Zp}
\]
where the direct sum is taken over all primes $w$ of $F_n$ lying above the prime $v$ of $F$ and
\[
J_v^\pm(E/\fcyc) = \ilim\, J_v^\pm(E/F_n).
\]
If $v\notin\ssF$, define $J_v^\pm(E/F_n)=J_v(E/F_n)$ and $J_v^\pm(E/\fcyc)=J_v(E/\fcyc)$ as in \eqref{jv} and \eqref{jvl}.
There is a natural morphism
\begin{equation}\label{eq:lambdapm}
\lambda_\cyc^\pm:H^1(F_S/\fcyc,E_{p^\infty})\rightarrow \bigoplus_{v\in S}J_v^\pm(E/F_\cyc),
\end{equation}
and a fundamental diagram analogous to \eqref{fund}:
\begin{equation}\label{fundpm}
\begin{CD}
0 @>>>  \sel_p^\pm(E/\fcyc)^{\Gamma} @>>> H^1(F_S/\fcyc, E_{p^{\infty}})^{\Gamma} @>{{\lambda}^{\pm,\Gamma}_{\cyc}}>> \underset{v\in S}{\bigoplus}  J_v^\pm(E/\fcyc)^{\Gamma}\\
& & @A{\alpha^{\pm}}AA                                    @A{\beta}AA                                                       @A{\gamma^\pm=\oplus \gamma_v^\pm}AA\\
0 @>>> \sel_p(E/F) @>>> H^1(F_S/F, E_{p^{\infty}}) @>{\lambda}>> \underset{v \in S}{\bigoplus} J_v(E/F) .
\end{CD}
\end{equation}
This diagram gives a  natural map 
\begin{equation}\label{deltapm}
\delta^\pm : \ker \gamma^\pm \to \coker\,\lambda .
\end{equation}
Proposition~\ref{prop:equiv} generalizes verbatim to this setting:
\begin{proposition}\label{prop:equivpm}
Suppose that $\lambda_\cyc^\pm$ in \eqref{eq:lambdapm} is surjective. If $H^2(F_S/F,E_{p^\infty})=0$, then the following are equivalent:
\begin{enumerate}[(i)]
\item The map $\lambda_\cyc^{\pm,\Gamma}$ in \eqref{fundpm} is surjective;
\item $H^1(\Gamma,\sel_p^\pm(E/F_\cyc))=0$;
\item The map $\delta^\pm$ in \eqref{deltapm} is surjective.
\end{enumerate}
\end{proposition}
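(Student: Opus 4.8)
The plan is to mimic the proof of Proposition~\ref{prop:equiv} essentially verbatim, since the signed setting differs only in replacing the local terms $J_v(E/\fcyc)$ by $J_v^\pm(E/\fcyc)$ and the corresponding maps $\gamma$, $\delta$, $\lambda_\cyc$ by their signed analogues $\gamma^\pm$, $\delta^\pm$, $\lambda_\cyc^\pm$. The structural input from Proposition~\ref{prop:equiv} that must be re-established is threefold: that $\gamma^\pm$ is surjective with cokernel zero, that the Hochschild--Serre argument forcing $H^1(\Gamma,H^1(F_S/\fcyc,E_{p^\infty}))=0$ goes through unchanged (it does, since it depends only on the hypothesis $H^2(F_S/F,E_{p^\infty})=0$ and not on the local conditions), and that the defining sequence for $\sel_p^\pm(E/\fcyc)$ is short exact, i.e. that $\lambda_\cyc^\pm$ is surjective, which is exactly the standing hypothesis.

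First I would record the short exact sequence
\[
0\rightarrow \sel_p^\pm(E/\fcyc)\rightarrow H^1(F_S/\fcyc, E_{p^{\infty}})\rightarrow \bigoplus_{v\in S}J_v^\pm(E/\fcyc)\rightarrow 0,
\]
which is exact on the right precisely because $\lambda_\cyc^\pm$ is assumed surjective. Taking the associated $\Gamma$-cohomology long exact sequence and invoking $H^1(\Gamma,H^1(F_S/\fcyc,E_{p^\infty}))=0$ gives the equivalence of $(i)$ and $(ii)$: the connecting map shows that $H^1(\Gamma,\sel_p^\pm(E/\fcyc))$ is precisely the cokernel of $\lambda_\cyc^{\pm,\Gamma}$, so one vanishes if and only if the other is surjective. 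This step is formal and uses nothing specific to the signed local conditions.

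For the equivalence with $(iii)$, I would reproduce the diagram chase of Proposition~\ref{prop:equiv}. The signed analogue of the commutative diagram \eqref{eq:diagramdelta} has top row $\lambda_\cyc^{\pm,\Gamma}$ and bottom row $\lambda$, with vertical maps $\beta$ and $\gamma^\pm$; both are surjective, $\beta$ by Lemma~\ref{lem:beta} and $\gamma^\pm$ by the signed analogue of Lemma~\ref{kerg}. The same diagram chase yields an exact sequence
\[
\ker\gamma^\pm\stackrel{\delta^\pm}{\longrightarrow}\coker\lambda\longrightarrow\coker(\gamma^\pm\circ\lambda)\longrightarrow\coker\gamma^\pm=0,
\]
from which $(i)\Rightarrow(iii)$ follows because $(i)$ forces $\gamma^\pm\circ\lambda=\lambda_\cyc^{\pm,\Gamma}\circ\beta$ to be surjective, and $(iii)\Rightarrow(i)$ follows because surjectivity of $\delta^\pm$ together with surjectivity of $\gamma^\pm$ makes $\gamma^\pm\circ\lambda$, hence $\lambda_\cyc^{\pm,\Gamma}$, surjective.

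The one genuine gap, and the step I expect to be the main obstacle, is the surjectivity and vanishing-cokernel assertion for $\gamma^\pm$, since the paper does not state a signed version of Lemma~\ref{kerg}. I would establish this place by place: for $v\nmid p$ the signed local condition equals the unsigned one, so surjectivity of $\gamma_v^\pm$ is already Lemma~\ref{kerg}; for $v\in\ssF$ one must show the map $J_v^\pm(E/F)\to J_v^\pm(E/\fcyc)^\Gamma$ is surjective with zero cokernel. Here the isomorphism \eqref{eq:quotientpm}, identifying $J_v^\pm(E/\fcyc)$ with a sum of copies of the Pontryagin dual of $\Lambda(\Gamma)$, is the key tool: since $\Gamma$ has $p$-cohomological dimension one, the relevant $H^2(\Gamma,-)$ vanishes, giving surjectivity of $\gamma_v^\pm$ and $\coker\gamma^\pm=0$. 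Once this local input is in hand, the rest of the proof is a transcription of Proposition~\ref{prop:equiv} and requires no new ideas.
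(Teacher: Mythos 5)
Your overall strategy is the one the paper intends (its own ``proof'' is literally the assertion that Proposition~\ref{prop:equiv} generalizes verbatim), and you are right that the only non-formal input is the surjectivity of $\gamma^\pm$. But your argument for that input fails at the primes $v\in\ssF$, which is precisely where something new happens. The mechanism you invoke --- $\Gamma$ has $p$-cohomological dimension one, so ``the relevant $H^2(\Gamma,-)$ vanishes'' --- proves only that $H^1(F_v,\Ep)\rightarrow H^1(F_{\cyc,v},\Ep)^{\Gamma_v}$ is surjective. It says nothing about the passage to the signed quotient: the subgroup $E^\pm(F_{\cyc,v})\otimes\Qp/\Zp$ is cut out by trace conditions on formal-group points, not by Galois cohomology, so Hochschild--Serre provides no $H^2$-bound on $\coker\gamma_v^\pm$. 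Concretely, taking $\Gamma_v$-cohomology of
\[
0\rightarrow E^\pm(F_{\cyc,v})\otimes\Qp/\Zp\rightarrow H^1(F_{\cyc,v},\Ep)\rightarrow J_v^\pm(E/\fcyc)\rightarrow 0
\]
and using $H^1\bigl(\Gamma_v,H^1(F_{\cyc,v},\Ep)\bigr)=0$ (a consequence of $H^2(F_v,\Ep)=0$), one finds
\[
\coker\gamma_v^\pm\cong H^1\bigl(\Gamma_v,E^\pm(F_{\cyc,v})\otimes\Qp/\Zp\bigr),
\]
which is an $H^1$, not an $H^2$; its vanishing is a genuine theorem about the plus/minus norm groups, not a consequence of cohomological dimension. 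Likewise, the cofreeness \eqref{eq:quotientpm} gives $H^i(\Gamma,J_v^\pm(E/\fcyc))=0$ for $i\ge1$, a statement about the cohomology of the target, which does not formally imply that the map from level $F$ into its $\Gamma$-invariants is surjective. To close the gap one needs Kobayashi's structure theory: the exact sequence $0\rightarrow\hat E(F_v)\rightarrow E^+(F_{\cyc,v})\oplus E^-(F_{\cyc,v})\rightarrow\hat E(F_{\cyc,v})\rightarrow 0$ of \cite[Proposition~8.12]{Ko}, the level-zero identification $\bigl(E^\pm(F_{\cyc,v})\otimes\Qp/\Zp\bigr)^{\Gamma_v}=\hat E(F_v)\otimes\Qp/\Zp$ (the cyclotomic analogue of Lemma~\ref{lem:cohopm}(b)), and the Coates--Greenberg fact that $\hat E(F_{\cyc,v})\otimes\Qp/\Zp=H^1(F_{\cyc,v},\Ep)$; a corank count in the resulting long exact sequence then forces $H^1\bigl(\Gamma_v,E^\pm(F_{\cyc,v})\otimes\Qp/\Zp\bigr)=0$. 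Alternatively, this vanishing can be extracted from the local computations underlying the control theorems in \cite{Ko,Ki,K-O}.

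Two smaller points. First, your case division ($v\nmid p$ versus $v\in\ssF$) omits the primes $v\mid p$ with $v\notin\ssF$: in the setting of \S\ref{S:pm} the curve may have good \emph{ordinary} reduction at such primes, $J_v^\pm=J_v$ there by definition, and Lemma~\ref{kerg} does not cover them, since it was proved under the standing hypothesis of Section~\ref{sec:selcyc} that all primes above $p$ are supersingular; for these primes the surjectivity of $\gamma_v^\pm$ requires the ordinary-reduction argument of \cite{C-S} (via $H^1(F_{\cyc,w},E)(p)\cong H^1(F_{\cyc,w},\tilde E_w[p^\infty])$ and $p$-cohomological dimension one). Second, as a matter of bookkeeping, only the implication $(iii)\Rightarrow(i)$ actually uses $\coker\gamma^\pm=0$ (to upgrade $\coker(\gamma^\pm\circ\lambda)\cong\coker\gamma^\pm$ to surjectivity of $\gamma^\pm\circ\lambda$); the equivalence $(i)\Leftrightarrow(ii)$ and the implication $(i)\Rightarrow(iii)$ need only the standing hypotheses and the surjectivity of $\beta$, exactly as in your transcription.
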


\begin{remark}
As in Lemma~\ref{lem:PT}, it is possible to deduce the surjectivity of $\lambda_\cyc^\pm$ using Poitou-Tate exact sequences when the following map is injective:
\[
\varprojlim_n H^1(F_S/F_n,T_p(E))\rightarrow \bigoplus_{v|p,v\notin \ssF}\varprojlim_n \frac{ H^1(F_{n,v},T_p(E))}{H^1_f(F_{n,v},T_p(E))}\oplus \bigoplus_{v\in \ssF}\varprojlim_n \frac{ H^1(F_{n,v},T_p(E))}{H^1_\pm(F_{n,v},T_p(E))},
\]
where $H^1_\pm(F_{n,v},T_p(E))$ is the exact annihilator of $E^\pm(F_{n,v})\otimes\Qp/\Zp$ under local Tate duality.  When $F=\QQ$ (so that $\ssF=\{p\}$), this map is indeed injective thanks to the cotorsionness of $\sel_p^\pm(E/\QQ_\cyc)$ (c.f.  \cite[Lemma~6.2]{L-Z}). In fact, we shall see below that the surjectivity of $\lambda_\cyc^\pm$ is directly related to the cotorsionness of $\sel_p^\pm(E/\QQ_\cyc)$ in a much more general setting.
\end{remark}

\begin{proposition}\label{prop:surj-pm}
 The $\Lambda(\Gamma)$-module  $\widehat{\sel_p^\pm(E/\fcyc)}$ is torsion if and only if the map $\lambda_\cyc^\pm$ in \eqref{eq:lambdapm} is surjective and $H^2(F_S/\fcyc,\Ep)=0$. 
\end{proposition}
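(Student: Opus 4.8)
The plan is to reduce the statement to a $\Lambda(\Gamma)$-corank computation and then to upgrade the vanishing of coranks to honest vanishing using Poitou--Tate duality. Writing $X^\pm=\widehat{\sel_p^\pm(E/\fcyc)}$, this module is $\Lambda(\Gamma)$-torsion if and only if $\corank_{\Lambda(\Gamma)}\sel_p^\pm(E/\fcyc)=0$. The definition of $\lambda_\cyc^\pm$ in \eqref{eq:lambdapm} gives the four-term exact sequence
\[
0\to \sel_p^\pm(E/\fcyc)\to H^1(F_S/\fcyc,\Ep)\stackrel{\lambda_\cyc^\pm}{\longrightarrow}\bigoplus_{v\in S}J_v^\pm(E/\fcyc)\to\coker\lambda_\cyc^\pm\to 0,
\]
and since $\Lambda(\Gamma)$-corank is additive along exact sequences of cofinitely generated modules, everything comes down to the coranks of the two middle terms.

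First I would record these coranks. By Imai's theorem $\Ep(\fcyc)$ is finite, so $\corank_{\Lambda(\Gamma)}H^0(F_S/\fcyc,\Ep)=0$; the global Euler--Poincar\'e characteristic formula over $\Lambda(\Gamma)$ then gives
\[
\corank_{\Lambda(\Gamma)}H^1(F_S/\fcyc,\Ep)=[F:\Q]+\corank_{\Lambda(\Gamma)}H^2(F_S/\fcyc,\Ep).
\]
For the local term, the places $v\nmid p$ contribute $0$ since the relevant local cohomology is $\Lambda(\Gamma)$-cotorsion (cf. Remark~\ref{rk:J}), while for $v\in\ssF$ the isomorphism \eqref{eq:quotientpm} gives corank $[F_v:\Qp]$ and the remaining places above $p$ contribute $[F_v:\Qp]$ by the local Euler characteristic, so that $\corank_{\Lambda(\Gamma)}\bigoplus_{v\in S}J_v^\pm(E/\fcyc)=\sum_{v\mid p}[F_v:\Qp]=[F:\Q]$. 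Substituting into the four-term sequence yields the key identity
\begin{equation}
\corank_{\Lambda(\Gamma)}\sel_p^\pm(E/\fcyc)=\corank_{\Lambda(\Gamma)}H^2(F_S/\fcyc,\Ep)+\corank_{\Lambda(\Gamma)}\coker\lambda_\cyc^\pm.
\tag{$\star$}
\end{equation}
The direction $(\Leftarrow)$ is then immediate, since surjectivity of $\lambda_\cyc^\pm$ together with $H^2(F_S/\fcyc,\Ep)=0$ makes the right-hand side vanish.

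For the converse I would invoke the Cassels--Poitou--Tate exact sequence over $\fcyc$ for the signed Selmer structure, in the form already used in Lemma~\ref{lem:PT} and the remark following Proposition~\ref{prop:equivpm}. Because $\Ep$ is self-dual under the Weil pairing and, by local Tate duality, $H^1_\pm(F_{n,v},T_p(E))$ is the exact annihilator of $E^\pm(F_{n,v})\otimes\Qp/\Zp$, the relevant dual Selmer group is the compact signed Selmer group $\mathfrak{X}^\pm:=\ker\loc^\pm\subseteq\varprojlim_n H^1(F_S/F_n,T_p(E))$. Since each local field $F_{\cyc,w}$ has $p$-cohomological dimension one, the local terms $H^2(F_{\cyc,w},\Ep)$ vanish, and the sequence truncates to
\[
0\to\coker\lambda_\cyc^\pm\to\widehat{\mathfrak{X}^\pm}\to H^2(F_S/\fcyc,\Ep)\to 0.
\]
The decisive point is that $\varprojlim_n H^1(F_S/F_n,T_p(E))$ is $\Lambda(\Gamma)$-torsion-free, its torsion submodule being $T_p(E)^{G_{\fcyc}}=0$ by Imai's finiteness theorem; hence its submodule $\mathfrak{X}^\pm$ is torsion-free as well.

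Now $(\Rightarrow)$ follows cleanly. If $X^\pm$ is $\Lambda(\Gamma)$-torsion, then $(\star)$ forces both coranks on its right-hand side to vanish, and the displayed short exact sequence gives $\rank_{\Lambda(\Gamma)}\mathfrak{X}^\pm=0$; being simultaneously torsion and torsion-free, $\mathfrak{X}^\pm=0$. Feeding this back into the short exact sequence yields $\coker\lambda_\cyc^\pm=0$ and $H^2(F_S/\fcyc,\Ep)=0$ at once. The step I expect to be the main obstacle is the correct set-up of this Poitou--Tate sequence over the infinite extension $\fcyc$: identifying the dual local conditions as the exact annihilators $H^1_\pm(\,\cdot\,,T_p(E))$, verifying the vanishing of the local $H^2$ terms, and checking that the compact signed Selmer group embeds in the torsion-free Iwasawa module $\varprojlim_n H^1(F_S/F_n,T_p(E))$; granting these, the corank bookkeeping behind $(\star)$ is routine.
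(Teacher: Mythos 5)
Your proof is correct, and while it lives in the same Poitou--Tate/Euler-characteristic framework as the paper's, it is organized differently in both directions. The paper runs the rank count on the compact side: it takes the Iwasawa-cohomological Poitou--Tate sequence ending in $\widehat{\sel_p^\pm(E/\fcyc)}\to H^2_{\mathrm{Iw}}(F_S,T)\to 0$, computes $\Lambda(\Gamma)$-ranks of the local quotients via local Tate duality and \cite[Theorem~3.34]{K-O}, and deduces $\rank_{\Lambda(\Gamma)}\ker\loc^\pm=\rank_{\Lambda(\Gamma)}\widehat{\sel_p^\pm(E/\fcyc)}$; you instead run the corank count on the discrete side through the four-term sequence defining $\lambda_\cyc^\pm$, arriving at the equivalent identity $(\star)$. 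The real divergence is in the converse direction. The paper needs two separate inputs: the equivalence ``$H^2_{\mathrm{Iw}}(F_S,T)$ is $\Lambda(\Gamma)$-torsion $\Leftrightarrow H^2(F_S/\fcyc,\Ep)=0$'' (quoted from \cite{lim} or \cite{PRbook}) to handle the $H^2$ statement, and the $\Lambda(\Gamma)$-torsion-freeness of the \emph{local} modules $\widehat{J_v^\pm(E/\fcyc)}$ to kill $\coker\lambda_\cyc^\pm$. You replace both by a single \emph{global} input: the $\Lambda(\Gamma)$-torsion-freeness of $H^1_{\mathrm{Iw}}(F_S,T_p(E))$, which follows from Imai's theorem (so that $T_p(E)^{G_{\fcyc}}=0$), after which the short exact sequence $0\to\coker\lambda_\cyc^\pm\to\widehat{\mathfrak{X}^\pm}\to H^2(F_S/\fcyc,\Ep)\to 0$ yields both vanishings at once; your forward direction is also shorter, since it reads off $(\star)$ without passing through $\ker\loc^\pm$ at all. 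This is an economical variant, and the truncation of the Cassels--Poitou--Tate sequence via $\mathrm{cd}_p(F_{\cyc,w})=1$ is sound. Two small points to tighten: the torsion-freeness of $H^1_{\mathrm{Iw}}(F_S,T)$ (equivalently, the identification of its torsion submodule with $T^{G_{\fcyc}}$, including the absence of $p$-primary torsion) is a known but nontrivial fact and should carry a citation rather than a one-line assertion, exactly as you anticipated; and your reference to Remark~\ref{rk:J} for the cotorsionness of the local terms at $v\nmid p$ is misplaced --- that remark concerns the vanishing of $J_v(E/\fcyc)$ at supersingular $v\mid p$, whereas the cotorsion statement for $v\nmid p$ comes from \cite{C-S}.
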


\begin{proof}
Let $T=T_p(E)$. For $i=1,2$, define
\[
H^i_{\mathrm{Iw}}(F_S,T)=\varprojlim H^i(F_S/F_n,T).
\]
For $v|p$, define
\[
\HIw(F_v,T)=\varprojlim H^1(F_{n,v},T)
\]
and
\begin{align*}
H^1_{\mathrm{Iw},f}(F_{v},T)&=\varprojlim H^1_f(F_{n,v},T)\text{ (for $v\notin \ssF$),}\\
H^1_{\mathrm{Iw},\pm}(F_{v},T)&=\varprojlim H^1_\pm(F_{n,v},T)\text{ (for $v\in \ssF$).}
\end{align*}
Consider the Poitou-Tate exact sequence
\begin{align}
\HIw(F_S,T)\stackrel{\loc^\pm}{\longrightarrow }\bigoplus_{v|p,v\notin \ssF} \frac{ \HIw(F_v,T)}{H^1_{\mathrm{Iw},f}(F_{v},T)}\oplus\bigoplus_{v\in \ssF}\frac{ \HIw(F_v,T)}{H^1_{\mathrm{Iw},\pm}(F_{v},T)}\notag\\
\rightarrow \widehat{\sel_p^\pm(E/\fcyc)}\rightarrow H^2_{\mathrm{Iw}}(F,T)\rightarrow 0\label{eq:PT}
\end{align}
(c.f. \cite[Proposition~A.3.2]{PRbook}).

As explained in \cite[page 23]{C-S},
\[
\rank_{\Lambda(\Gamma)}\frac{ \HIw(F_v,T)}{H^1_{\mathrm{Iw},f}(F_{v},T)}=[F_v:\Qp]
\]
for all $v|p$ and $v\notin \ssF$. For all $v\in \ssF$,
local Tate duality and \cite[Theorem~3.34]{K-O} imply that
\[
\rank_{\Lambda(\Gamma)}\frac{ \HIw(F_v,T)}{H^1_{\mathrm{Iw},\pm}(F_{v},T)}=[F_v:\Qp].
\]
Furthermore,
\[
\rank_{\Lambda(\Gamma)}\HIw(F_S,T)-\rank_{\Lambda(\Gamma)}H^2_{\mathrm{Iw}}(F_S,T)=[F:\QQ]
\]
by the  Euler characteristic formula (see proof of Proposition~1.3.2 in \cite{PRbook}). Therefore, on considering the rank of each term in \eqref{eq:PT}, we deduce that
\begin{equation}\label{eq:compareranks}
\rank_{\Lambda(\Gamma)}\ker\loc^\pm=\rank_{\Lambda(\Gamma)}\widehat{\sel_p^\pm(E/\fcyc)}.
\end{equation}

If $\lambda_\cyc^\pm$ is surjective and $H^2(F_S/\fcyc,\Ep)=0$,  the second Poitou-Tate exact sequence given in \cite[Proposition~A.3.2]{PRbook} implies that
\[
\widehat{ \coker\lambda_\cyc^\pm}\cong \ker\loc^\pm=0.
\]
Therefore, \eqref{eq:compareranks} tells us that  $\widehat{\sel_p^\pm(E/\fcyc)}$ is $\Lambda(\Gamma)$-torsion.

Conversely, suppose that  $\widehat{\sel_p^\pm(E/\fcyc)}$ is $\Lambda(\Gamma)$-torsion. Then \eqref{eq:PT} implies that $H^2_{\mathrm{Iw}}(F_S,T)$ is also $\Lambda(\Gamma)$-torsion. This is equivalent to $H^2(F_S/\fcyc,\Ep)=0$ (see \cite[Lemma~7.1]{lim} or \cite[Proposition~1.3.2]{PRbook}). This then gives the isomorphism
\[
\widehat{ \coker\lambda_\cyc^\pm}\cong \ker\loc^\pm.
\]via the Poitou-Tate exact sequence once again. In particular, \eqref{eq:compareranks} tells us that $\widehat{ \coker\lambda_\cyc^\pm}$ is $\Lambda(\Gamma)$-torsion. However, there is an injection 
\[
\widehat{\coker\lambda_\cyc^\pm }\hookrightarrow \bigoplus_{v|p} \widehat{J^\pm_v(E/\fcyc)}
\]
and that $\widehat{J^\pm_v(E/\fcyc)}$ is $\Lambda(\Gamma)$-torsion free for all $v|p$ (see \cite[page 38]{C-S} for $v\notin\ssF$ and \eqref{eq:quotientpm} for $v\in\ssF$). Therefore, $\widehat{\coker\lambda_\cyc^\pm }$ is  zero as required.
\end{proof}

\begin{remark}Suppose that $\sel_p(E/F)$ is finite. By Remark~\ref{rk:SameSel}, $\sel_p^\pm(E/F)$ are also finite. Then $\widehat{\sel_p^\pm(E/\fcyc)}$ are both  $\Lambda(\Gamma)$-torsion by the control theorem (c.f. \cite[Theorem~9.3]{Ko} and \cite[Lemma~3.9]{Ki}).
\end{remark}

\begin{lemma}\label{lem:surjpm}
If $\lambda_\cyc^\pm$ is surjective, then the map $\lambda_\cyc$ defined in \eqref{sel} is also surjective. Similarly, the surjectivity of $\lambda_\cyc^{\pm,\Gamma}$ in \eqref{fund} implies that of $\lambda_\cyc^{\Gamma}$ in \eqref{fundpm}.
\end{lemma}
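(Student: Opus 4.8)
The plan is to realise both $\lambda_\cyc$ and $\lambda_\cyc^{\Gamma}$ as the composite of the corresponding signed map with a projection of the local target, and then to use that this projection is split surjective. First I would compare the two local targets. For every $v\in S\setminus\ssF$ the local conditions defining the two maps coincide, so $J_v^\pm(E/\fcyc)=J_v(E/\fcyc)$ by construction. For $v\in\ssF$ the prime $v$ lies above $p$ and $E$ is supersingular there, so the purely local computation in the proof of Lemma~\ref{kerg} (the vanishing of the universal norm, \cite{Sc}) gives $J_v(E/\fcyc)=0$. Hence
\[
\bigoplus_{v\in S}J_v(E/\fcyc)=\bigoplus_{v\in S\setminus\ssF}J_v(E/\fcyc),
\]
which is exactly the direct summand of
\[
\bigoplus_{v\in S}J_v^\pm(E/\fcyc)=\bigoplus_{v\in S\setminus\ssF}J_v(E/\fcyc)\ \oplus\ \bigoplus_{v\in\ssF}J_v^\pm(E/\fcyc)
\]
obtained by forgetting the components indexed by $\ssF$.

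Next I would let $\mathrm{pr}$ be the projection of $\bigoplus_{v\in S}J_v^\pm(E/\fcyc)$ onto this summand. Both $\lambda_\cyc$ and $\lambda_\cyc^\pm$ arise by restricting a global class in $H^1(F_S/\fcyc,E_{p^\infty})$ to the local fields and then passing to the quotient defining the respective local condition; since these quotients agree away from $\ssF$, one has $\lambda_\cyc=\mathrm{pr}\circ\lambda_\cyc^\pm$. As $\mathrm{pr}$ is a split surjection, surjectivity of $\lambda_\cyc^\pm$ at once yields surjectivity of $\lambda_\cyc$, which is the first assertion.

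For the second assertion I would apply the same factorisation after taking $\Gamma$-invariants. Taking invariants commutes with the finite direct sum, and $\mathrm{pr}$ is the projection onto a direct summand of $\Gamma$-modules, so the induced map $\mathrm{pr}^{\Gamma}$ on invariants is again a split surjection; moreover $\lambda_\cyc^{\Gamma}=\mathrm{pr}^{\Gamma}\circ\lambda_\cyc^{\pm,\Gamma}$ by $\Gamma$-equivariance of $\mathrm{pr}$. Thus surjectivity of $\lambda_\cyc^{\pm,\Gamma}$ forces that of $\lambda_\cyc^{\Gamma}$. The only substantive input is the identification $J_v(E/\fcyc)=0$ for $v\in\ssF$, and this is the step I expect to require the most care: it is what upgrades $\mathrm{pr}$ from a mere surjection (with possibly nontrivial kernel, on which $\Gamma$-invariants need not be exact) to an honest projection onto a direct summand. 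Once this vanishing — the point at which the supersingular hypothesis enters, via Lemma~\ref{kerg} — is in hand, both implications are formal and survive the passage to $\Gamma$-invariants.
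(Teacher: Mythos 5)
Your proof is correct and is essentially the paper's own argument: the decomposition $\bigoplus_{v\in S}J_v^\pm(E/\fcyc)=\bigoplus_{v\in S}J_v(E/\fcyc)\oplus\bigoplus_{v\in\ssF}J_v^\pm(E/\fcyc)$, which you derive from the vanishing $J_v(E/\fcyc)=0$ for $v\in\ssF$ via the local computation in Lemma~\ref{kerg}, is precisely the content of Remark~\ref{rk:J} that the paper invokes, after which surjectivity passes to the direct summand. Your explicit handling of the split projection and its compatibility with taking $\Gamma$-invariants merely spells out what the paper compresses into ``the proof for $\lambda_\cyc^\Gamma$ is the same.''
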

\begin{proof}
By Remark~\ref{rk:J}, 
\[
\bigoplus_{v\in S}J_v^\pm(E/F_\cyc)=\bigoplus_{v\in S}J_v(E/F_\cyc)\oplus\bigoplus_{v\in \ssF}J_v^\pm(E/F_\cyc).
\]
Therefore, if $\lambda_\cyc^\pm$ is surjective onto $\bigoplus_{v\in S}J_v^\pm(E/F_\cyc)$, then $\lambda_\cyc$ is surjective onto $\bigoplus_{v\in S}J_v(E/F_\cyc)$. The proof for $\lambda_\cyc^\Gamma$ is the same.
\end{proof}

\begin{remark} In particular, under the assumption that $\lambda_\cyc^\pm$ is surjective and $ H^2(F_S/F,\Ep)=0$, there is an implication
\[
H^1(\Gamma,\sel_p^\pm(E/\fcyc))=0\quad \Rightarrow \quad H^1(\Gamma,\sel_p(E/\fcyc))=0.
\]
\end{remark}

We conclude this section with the following analogue of Theorem~\ref{thm:notorsion}.

\begin{theorem}\label{thm:pmSel}
Let $E$ be an elliptic curve over $F$ such that the following hypotheses are satisfied:
\begin{enumerate}[(i)]
\item $E$ has supersingular reduction at all the primes $v$ of $F$ lying above $p$.
\item The map $\lambda_\cyc^\pm$  is surjective.
\item $H^2(F_S/F, E_{p^{\infty}})=0$.
\item The equivalent conditions in Proposition~\ref{prop:equivpm} hold.
\end{enumerate}
Then the $\Lambda(\Gamma)$-module $\widehat{\sel_p^\pm(E/\fcyc)}$ admits no non-trivial finite sub-module.
\end{theorem}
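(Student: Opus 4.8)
The plan is to imitate the proof of Theorem~\ref{thm:notorsion} line for line, with the signed Selmer group $\sel_p^\pm(E/\fcyc)$ in place of the classical one. The only genuine ingredient is the vanishing of a single homology group; once that is secured, the statement about finite submodules is purely formal, so the whole argument reduces to dualizing the conclusion of Proposition~\ref{prop:equivpm}.

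First I would observe that hypotheses (ii) and (iii) are exactly the standing hypotheses of Proposition~\ref{prop:equivpm}, namely the surjectivity of $\lambda_\cyc^\pm$ and the vanishing $H^2(F_S/F,E_{p^\infty})=0$; hence that proposition applies. Hypothesis (iv) asserts that its equivalent conditions hold, and condition (ii) of Proposition~\ref{prop:equivpm} reads $H^1(\Gamma,\sel_p^\pm(E/\fcyc))=0$. I would then pass to Pontryagin duals: since $\Gamma\cong\Zp$ and $\sel_p^\pm(E/\fcyc)$ is a discrete $p$-primary $\Gamma$-module, duality interchanges its continuous cohomology with the homology of the compact dual $X^\pm(E/\fcyc)=\widehat{\sel_p^\pm(E/\fcyc)}$, yielding
\[
H_1(\Gamma,X^\pm(E/\fcyc))\cong\widehat{H^1(\Gamma,\sel_p^\pm(E/\fcyc))}=0.
\]

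Finally I would feed this vanishing into \cite[Lemma~A.1.5]{C-S}, exactly as in the proof of Theorem~\ref{thm:notorsion}. To apply that lemma I need $X^\pm(E/\fcyc)$ to be finitely generated over $\Lambda(\Gamma)$; this is inherited from the inclusions $\sel_p^\pm(E/\fcyc)\subseteq\sel_p(E/\fcyc)\subseteq H^1(F_S/\fcyc,E_{p^\infty})$, whose last term has dual finitely generated over $\Lambda(\Gamma)$. The lemma then upgrades $H_1(\Gamma,X^\pm(E/\fcyc))=0$ into the assertion that $X^\pm(E/\fcyc)$ contains no non-trivial finite submodule, which is the theorem. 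I expect no real obstacle here: all the genuine difficulty has been front-loaded into verifying the equivalent conditions of Proposition~\ref{prop:equivpm}, and what remains is bookkeeping, namely confirming that the proposition truly applies under (i)--(iv) and that the cohomology-to-homology duality is oriented correctly (continuous cohomology of the discrete signed Selmer group against homology of its compact dual).
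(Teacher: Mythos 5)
Your proposal is correct and follows exactly the route of the paper's own proof: hypothesis (iv) supplies $H^1(\Gamma,\sel_p^\pm(E/\fcyc))=0$ via Proposition~\ref{prop:equivpm}, Pontryagin duality converts this into $H_1(\Gamma,\widehat{\sel_p^\pm(E/\fcyc)})=0$, and \cite[Lemma~A.1.5]{C-S} finishes. Your extra remark on finite generation of the dual over $\Lambda(\Gamma)$ is a harmless (and valid) bookkeeping point that the paper leaves implicit.
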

\begin{proof}
 As in the proof of Theorem~\ref{thm:notorsion}, we have
 $$H_1(\Gamma, \widehat{\sel_p^\pm(E/\fcyc)})=0$$
 and the result  follows from \cite[Lemma~A.1.5]{C-S}.
\end{proof}

\section{Euler characteristics of plus and minus Selmer groups over abelian $p$-adic Lie extensions}

\subsection{Plus and minus norm groups over $\Zp^2$-extensions}
\label{S:pm2}
We review the plus and minus norm groups defined in \cite[\S2.1-\S2.2]{Kim}. Let $E/\Qp$ be an elliptic curve with good supersingular reduction and $\#\tilde E(\FF_p)=1+p$ and $k$ a finite unramified extension of $\Qp$. Let $k_\cyc$ denote the $\Zp$-cyclotomic extension of $k$. If $n\ge 0$ is an integer, let $k_n$ denote the unique sub-field of $k_\cyc$ such that $k_n/k$ is of degree $p^n$. The plus and minus norm groups $E^\pm(k_n)$ are defined as in \eqref{def+} and \eqref{defn-}.

Let $k^\ur$ denote the unramified $\Zp$-extension of $k$ and $k_\cyc^\ur$ the compositum of $k_\cyc$ and $k^\ur$. {In particular, $\Gal(k_\cyc^\ur/k)\cong \Zp^{2}$.} For an integer $m\ge0$, let $k^{(m)}$ denote the unique sub-extension 
of $k^\ur$, such that $k^{(m)}/k$ of degree $p^m$. 
{Similarly, for $n\ge1$, define $k_n^\ur$ to be the unramified $\Zp$-extension of $k_n$ and $k_n^{(m)}$ to the unique sub-extension such that $k_n^{(m)}/k_n$ of degree $p^m$. Note that $k_\cyc^\ur=\cup_{m,n} k_n^{(m)} $ and $\Gal(k_n^{(m)}/k)\cong \Z/p^m\times \Z/p^n$.}

 On replacing $k$ by $k^{(m)}$, \eqref{def+} and \eqref{defn-} give the plus and minus norm groups  $E^\pm\left(k_n^{(m)}\right)$ for all $n\ge0$. On taking union over $m$ and $n$, this gives
\[
E^\pm\left(k_\cyc^\ur\right)=\bigcup_{m,n\ge0}E^\pm\left(k_n^{(m)}\right).
\]

The following lemmas will be useful later.
\begin{lemma}\label{lem:notor}
$\Ep(k_\cyc^\ur)=0$.
\end{lemma}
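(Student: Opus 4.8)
The plan is to reduce the statement to the vanishing of the $p$-torsion of $E$ over the base field $k$ itself, exploiting that $\Gal(k_\cyc^\ur/k)\cong\Zp^2$ is pro-$p$ whereas the $p$-torsion is an $\FF_p$-vector space on which a pro-$p$ group cannot act freely. First I would record that, because $E$ has supersingular reduction, $E(K)[p]=\hat E(\m_K)[p]$ for every algebraic extension $K/k$: the reduction exact sequence $0\to\hat E(\m_K)\to E(K)\to\tilde E(\kappa_K)\to 0$, together with $\tilde E[p](\bar\FF_p)=0$, forces all $p$-torsion to lie in the formal group. Since $k/\Qp$ is unramified, its absolute ramification index is $1<p-1$ (as $p$ is odd), so by the classical fact that a formal group over $\cO_k$ has no nontrivial $p$-torsion in $\hat E(\m_k)$ when the ramification index is $<p-1$, we get $\hat E(\m_k)[p]=0$ and hence $E(k)[p]=0$. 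It therefore suffices to prove $\Ep(k_\cyc^\ur)[p]=0$, which already forces $\Ep(k_\cyc^\ur)=0$ because any nonzero torsion point produces a nonzero $p$-torsion point.

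Next I would set $W=E[p]\cap E(k_\cyc^\ur)$ and suppose for contradiction that $W\ne 0$. Since $k_\cyc^\ur/k$ is Galois, $E(k_\cyc^\ur)$ is stable under $G_k$, and $E[p]$ is $G_k$-stable, so $W$ is an $\FF_p$-subspace of $E[p]$ that is $G_k$-stable and on which $G_{k_\cyc^\ur}$ acts trivially. Thus the $G_k$-action on $W$ factors through $\Gal(k_\cyc^\ur/k)\cong\Zp^2$. The image $P$ of this pro-$p$ group in $\GL(W)$ is a finite $p$-group, and a finite $p$-group acting on a nonzero $\FF_p$-vector space always fixes a nonzero vector (by the class-equation congruence $\# W\equiv \# W^{P}\bmod p$). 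Any such $0\ne Q\in W^{P}$ is then fixed by $\Gal(k_\cyc^\ur/k)$ and by $G_{k_\cyc^\ur}$, hence by all of $G_k$, so $0\ne Q\in E(k)[p]$, contradicting the previous paragraph. Therefore $W=0$ and the lemma follows.

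The step I expect to be the main obstacle is handling the case where $E[p]$ is irreducible as a $G_k$-module, since then no Galois-stable line is available a priori and one cannot simply argue via a character $G_k\to\FF_p^\times$. This is precisely what the $p$-group fixed-point argument circumvents, using crucially that $\Gal(k_\cyc^\ur/k)$ is pro-$p$ while $\FF_p^\times$ has order prime to $p$. As an alternative route, one may observe that $k_\cyc^\ur=\bigcup_m (k^{(m)})_\cyc$ is an increasing union of the cyclotomic $\Zp$-extensions of the finite unramified extensions $k^{(m)}/\Qp$; invoking \cite[Proposition~8.7]{Ko} over each $k^{(m)}$ gives $\Ep((k^{(m)})_\cyc)=0$, and since every torsion point lies in some finite layer $k_n^{(m)}\subset (k^{(m)})_\cyc$, taking the union yields the claim. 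In either form the argument uses only that $E$ is supersingular and $k/\Qp$ is unramified, and not the normalization $a_p=0$.
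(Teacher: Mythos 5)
Your proposal is correct, and your primary argument takes a genuinely different route from the paper's proof. The paper's proof is a one-line reduction: it cites \cite[Proposition~3.1]{K-O}, which gives $\Ep\bigl(k_\cyc^{(m)}\bigr)=0$ for every $m\ge 0$, and then (implicitly) takes the union over $m$ — this is precisely the alternative route you sketch in your last paragraph, with \cite[Proposition~8.7]{Ko} in place of the Kitajima--Otsuki reference (either citation works here). Your main argument is instead self-contained: supersingularity forces all $p$-torsion into the formal group, the standard torsion bound for formal groups over a base with ramification index $e<p-1$ gives $E(k)[p]=0$, and the pro-$p$ fixed-point argument (a finite $p$-group acting on a nonzero $\FF_p$-vector space has a nonzero fixed vector) descends any hypothetical $p$-torsion over $k_\cyc^\ur$ down to $k$, yielding a contradiction. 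What your route buys: it avoids the literature entirely, it makes transparent that only supersingularity and unramifiedness of $k$ are used (in particular not the normalization $a_p=0$, as you note), and it actually proves the stronger statement that $\Ep(\cK)=0$ for any Galois pro-$p$ extension $\cK/k$, not just $k_\cyc^\ur$. What the paper's citation buys is brevity and consistency, since \cite{K-O} is the source already invoked throughout \S5 (e.g.\ Proposition~3.32 and Theorem~3.34 of that paper). One minor streamlining remark: your first two steps are needed only for $K=k$ itself; the statement $E(K)[p]=\hat E(\m_K)[p]$ for general algebraic $K/k$ is never used, because descending to $k$ is exactly what the fixed-point argument accomplishes.
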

\begin{proof}
This follows from \cite[Proposition~3.1]{K-O}, which says that $\Ep\left(k_\cyc^{(m)}\right)=0$ for all $m\ge0$. 
\end{proof}

\begin{lemma}\label{lem:cohopm}
Let $H_k=\Gal(k_\cyc^\ur/k_\cyc)${$\cong \Zp$}. Then,
\begin{itemize}
\item[(a)] $H^1(H_k,E^+(k_\cyc^\ur))\oplus H^1(H_k,E^-(k_\cyc^\ur))\cong H^1(H_k,\hat E(k^\ur)) $;
\item[(b)] $\left(E^\pm(k_\cyc^\ur)\otimes\Qp/\Zp\right)^{H_k}=E^\pm(k_\cyc)\otimes\Qp/\Zp$.
\end{itemize}
\end{lemma}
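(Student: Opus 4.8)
The plan is to present $E^+(k_\cyc^\ur)$ and $E^-(k_\cyc^\ur)$ as the two halves of a Mayer--Vietoris decomposition of the formal group and then to extract everything from one long exact $H_k$-cohomology sequence. First I would record two $H_k$-equivariant identities,
\[
E^+(k_\cyc^\ur)\cap E^-(k_\cyc^\ur)=\hat E(k^\ur),\qquad E^+(k_\cyc^\ur)+E^-(k_\cyc^\ur)=\hat E(k_\cyc^\ur).
\]
Each $E^\pm(k_n^{(m)})$ is stable under $H_k=\Gal(k_\cyc^\ur/k_\cyc)$ because an element of $H_k$ fixes every $k_n$ and sends $k_n^{(m)}$ to itself, hence preserves the trace conditions; so both sides are $H_k$-modules and the identities are equivariant. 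The inclusion $\hat E(k^\ur)\subseteq E^+\cap E^-$ is immediate, and the reverse inclusion is exactly where Lemma~\ref{lem:notor} is used: a point meeting all the trace conditions is pushed down to cyclotomic level $0$ because $\Ep(k_\cyc^\ur)=0$ forces multiplication by $p$ to be injective, so an element of the intersection that is not already a level-$0$ point would differ from one by a nonzero $p$-torsion point, which is impossible. The sum identity is the unramified-times-cyclotomic analogue of Kobayashi's decomposition $\hat E=E^++E^-$, which I would quote from \cite{Kim} or rebuild layer by layer from \cite{Ko}. These identities package into the short exact sequence of $H_k$-modules
\[
0\to\hat E(k^\ur)\xrightarrow{x\mapsto(x,x)}E^+(k_\cyc^\ur)\oplus E^-(k_\cyc^\ur)\xrightarrow{(x,y)\mapsto x-y}\hat E(k_\cyc^\ur)\to0.
\]

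For part (a) I would take the long exact $H_k$-cohomology sequence of the displayed short exact sequence. The key local input is that $H^1(H_k,\hat E(k_\cyc^\ur))=0$: since $k_\cyc^\ur/k_\cyc$ is the unramified $\Zp$-extension, $\hat E(k_\cyc^\ur)$ is a discrete $H_k$-module and its cohomology is the direct limit of the finite-layer groups $H^1(\Gal(k_\cyc^{(m)}/k_\cyc),\hat E(k_\cyc^{(m)}))$, each of which vanishes by a filtration argument along the $\mathfrak{m}$-adic filtration of the formal group, whose graded pieces are copies of the residue field on which additive Hilbert~90 (the normal basis theorem) kills $H^1$. This vanishing makes the map $H^1(H_k,\hat E(k^\ur))\to H^1(H_k,E^+(k_\cyc^\ur))\oplus H^1(H_k,E^-(k_\cyc^\ur))$ surjective. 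For injectivity I must show the connecting map out of $\hat E(k_\cyc^\ur)^{H_k}=\hat E(k_\cyc)$ is zero, i.e. that $E^+(k_\cyc^\ur)^{H_k}\oplus E^-(k_\cyc^\ur)^{H_k}\to\hat E(k_\cyc)$ is surjective; here I would use $E^\pm(k_\cyc^\ur)^{H_k}=E^\pm(k_\cyc)$ (an $H_k$-fixed point lies over $k_\cyc$ and still obeys the $\pm$-conditions) together with the cyclotomic-level decomposition $E^+(k_\cyc)+E^-(k_\cyc)=\hat E(k_\cyc)$. This yields the isomorphism in (a).

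For part (b) I would first note that the same filtration argument gives $H^1(H_k,\hat E(k^\ur))=0$, so (a) shows $H^1(H_k,E^\pm(k_\cyc^\ur))=0$. Then I apply $H_k$-invariants to the Kummer sequence
\[
0\to E^\pm(k_\cyc^\ur)\to E^\pm(k_\cyc^\ur)\otimes\Qp\to E^\pm(k_\cyc^\ur)\otimes\Qp/\Zp\to0.
\]
Because $H^0$ commutes with the filtered colimits defining $\otimes\Qp$ and $\otimes\Qp/\Zp$, and using $E^\pm(k_\cyc^\ur)^{H_k}=E^\pm(k_\cyc)$, the invariants of the middle term are $E^\pm(k_\cyc)\otimes\Qp$; since the connecting map lands in $H^1(H_k,E^\pm(k_\cyc^\ur))=0$, the invariants of the quotient are precisely the image of $E^\pm(k_\cyc)\otimes\Qp$, namely $E^\pm(k_\cyc)\otimes\Qp/\Zp$. (If one only knew this $H^1$ to be $\Zp$-torsion-free the same conclusion would follow, since the connecting map has image the torsion subgroup.)

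The step I expect to be the real obstacle is the \emph{integral} structural input, namely the sum and intersection identities for the norm groups in the two-variable tower rather than merely over $k_\cyc$. The intersection identity genuinely relies on the torsion-freeness supplied by Lemma~\ref{lem:notor}, and the sum identity is where I would have to invoke the Kobayashi/Kim decomposition and verify that it survives base change along the unramified direction; by contrast the cohomological triviality of $\hat E$ in the unramified $\Zp$-direction, though it does the heavy lifting in forcing the flanking groups to vanish, is standard and only needs to be stated with care.
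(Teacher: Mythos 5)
Correct, and essentially the paper's own argument: the paper takes your displayed short exact sequence ready-made from \cite[Proposition~2.6]{Kim} and \cite[Proposition~8.12]{Ko} (so the two-variable sum/intersection identities you single out as the main obstacle are already available in the cited literature), runs the same $H_k$-cohomology sequence with the same identification of the invariant part, and quotes \cite[Theorem~3.1]{C-G} for the vanishing of $H^1\left(H_k,\hat E(k^\ur_\cyc)\right)$ in place of your finite-layer filtration argument. For (b), the paper establishes only the injectivity of $H^1\left(H_k,\hat{E}(k^\ur)\right)\rightarrow H^1\left(H_k,\hat{E}(k^\ur)\otimes\Qp\right)$ via the Kummer sequence rather than the full vanishing $H^1\left(H_k,\hat E(k^\ur)\right)=0$ --- i.e.\ precisely the torsion-freeness fallback in your final parenthetical --- and then concludes exactly as you do.
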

\begin{proof}
Recall from \cite[Proposition~2.6]{Kim} and \cite[Proposition~8.12]{Ko} the short exact sequence
\begin{equation}
\label{eq:pmexact}
0\rightarrow \hat{E}(k^\ur)\rightarrow E^+(k^\ur_\cyc)\oplus E^-(k^\ur_\cyc)\rightarrow\hat E(k^\ur_\cyc)\rightarrow 0,
\end{equation}
where the first map is given by the diagonal embedding $x\mapsto (x,x)$ and the second map is given by $(x,y)\mapsto x-y$. On taking $H_k$-cohomology, there is a long exact sequence
\begin{align*}
0\rightarrow \hat{E}(k^\ur)\rightarrow \hat E(k^\ur)\oplus\hat E(k^\ur)\rightarrow\hat E(k^\ur)\rightarrow H^1\left( H_k,\hat{E}(k^\ur)\right)\rightarrow\\
 H^1\left(H_k,E^+(k^\ur_\cyc)\right)\oplus H^1\left(H_k,E^-(k^\ur_\cyc)\right)\rightarrow H^1\left(H_k,\hat E(k^\ur_\cyc)\right).
\end{align*}
The first three terms clearly form a short exact sequence and \cite[Theorem~3.1]{C-G} says that $H^1\left(H_k,\hat E(k^\ur_\cyc)\right)=0$. Part (a) follows.

The $H_k$-cohomology exact sequence attached to the short exact sequence
\[
0\rightarrow \hat{E}(k^\ur)\rightarrow \hat{E}(k^\ur)\otimes\Qp\rightarrow \hat{E}(k^\ur)\otimes\Qp/\Zp\rightarrow 0
\]
gives the injectivity of  $H^1\left(H_k,\hat{E}(k^\ur)\right)\rightarrow H^1\left(H_k,\hat{E}(k^\ur)\otimes\Qp\right)$. On combining this with part (a), the natural maps  $H^1\left(H_k,E^\pm(k^\ur_\cyc)\right)\rightarrow H^1\left(H_k,E^\pm(k_\cyc^\ur)\otimes\Qp\right)$ are also injective. This implies part (b).
\end{proof}

\begin{proposition}\label{prop:free-quotient}
The Pontryagin dual of the quotient $\frac{H^1(k^\ur_\cyc,\Ep)}{E^{\pm}(k^\ur_\cyc)\otimes\Qp/\Zp}$ is a free $\Zp[[\Gal(k^\ur_\cyc/k)]]$-module of rank $[k:\Qp]$.
\end{proposition}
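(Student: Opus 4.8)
Write $\mathcal{G}=\Gal(k_\cyc^\ur/k)$, $H_k=\Gal(k_\cyc^\ur/k_\cyc)\cong\Zp$ with a fixed topological generator $\sigma$, $\Gamma=\Gal(k_\cyc/k)$, and put $R=\Zp[[\mathcal{G}]]$, $\bar R=\Zp[[\Gamma]]=R/(\sigma-1)R$, $d=[k:\Qp]$. Let $Q=\frac{H^1(k_\cyc^\ur,\Ep)}{E^\pm(k_\cyc^\ur)\otimes\Qp/\Zp}$ and $M=\widehat Q$. The plan is to prove freeness of $M$ by descent along $H_k$. First I would record the standard criterion: a compact $R$-module $M$ is free of rank $d$ as soon as $M[\sigma-1]=0$ and $M/(\sigma-1)M$ is free of rank $d$ over $\bar R$. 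The proof is to lift a $\bar R$-basis of $M/(\sigma-1)M$ to elements $m_1,\dots,m_d\in M$, giving $\phi\colon R^d\to M$; topological Nakayama yields surjectivity, and applying $-\otimes_{\Zp[[H_k]]}\bar R$ to $0\to\ker\phi\to R^d\to M\to0$ together with $\mathrm{Tor}_1^{\Zp[[H_k]]}(M,\bar R)=M[\sigma-1]=0$ shows $\ker\phi/(\sigma-1)\ker\phi=0$, so $\ker\phi=0$ by Nakayama. Since Pontryagin duality gives $M^{H_k}=\widehat{Q_{H_k}}$ and $M_{H_k}=\widehat{Q^{H_k}}$, it suffices to prove (A) $\widehat{Q^{H_k}}$ is free of rank $d$ over $\bar R$, and (B) $Q_{H_k}=0$.

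\textbf{Step (B).} Taking $H_k$-cohomology of $0\to E^\pm(k_\cyc^\ur)\otimes\Qp/\Zp\to H^1(k_\cyc^\ur,\Ep)\to Q\to0$ and using $\mathrm{cd}_p(H_k)=1$, the coinvariants $Q_{H_k}=H^1(H_k,Q)$ form a quotient of $H^1\!\left(H_k,H^1(k_\cyc^\ur,\Ep)\right)$. The Hochschild--Serre spectral sequence for $k_\cyc^\ur/k_\cyc$ identifies this last group with $E_\infty^{1,1}$, a subquotient of $H^2(k_\cyc,\Ep)$, which vanishes because the cyclotomic $\Zp$-extension of a $p$-adic field has $p$-cohomological dimension one. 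Hence $Q_{H_k}=0$.

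\textbf{Step (A).} Since $\Ep(k_\cyc^\ur)=0$ by Lemma~\ref{lem:notor}, inflation--restriction gives $H^1(k_\cyc^\ur,\Ep)^{H_k}=H^1(k_\cyc,\Ep)$; combining this with Lemma~\ref{lem:cohopm}(b), the $H_k$-invariants of the defining sequence of $Q$ yield
\[
0\to\frac{H^1(k_\cyc,\Ep)}{E^\pm(k_\cyc)\otimes\Qp/\Zp}\to Q^{H_k}\to H^1\!\left(H_k,E^\pm(k_\cyc^\ur)\otimes\Qp/\Zp\right).
\]
By \eqref{eq:quotientpm} applied with base field $k$, the Pontryagin dual of the left-hand term is free of rank $d=[k:\Qp]$ over $\bar R$, so (A) reduces to showing $H^1(H_k,E^\pm(k_\cyc^\ur)\otimes\Qp/\Zp)=0$. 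I would obtain this from \eqref{eq:pmexact} tensored with $\Qp/\Zp$, which remains exact as $\hat E(k_\cyc^\ur)$ is $\Zp$-torsion-free: in the resulting $H_k$-cohomology sequence the outer terms $H^1(H_k,\hat E(k^\ur)\otimes\Qp/\Zp)$ and $H^1(H_k,\hat E(k_\cyc^\ur)\otimes\Qp/\Zp)$ vanish, forcing the middle term $H^1(H_k,(E^+\oplus E^-)(k_\cyc^\ur)\otimes\Qp/\Zp)$ to vanish as well.

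\textbf{Main obstacle.} The vanishing of $H^1(H_k,\hat E(k_\cyc^\ur)\otimes\Qp/\Zp)$ is immediate from $H^1(H_k,\hat E(k_\cyc^\ur))=0$ (Coates--Greenberg, \cite[Theorem~3.1]{C-G}, already invoked in Lemma~\ref{lem:cohopm}) together with the fact that $H^1$ commutes with $\otimes\Qp$ for the procyclic group $H_k$. The genuinely new input, and the step I expect to require the most care, is the unramified vanishing $H^1(H_k,\hat E(k^\ur))=0$, equivalently surjectivity of $\sigma-1$ on $\hat E(\mathfrak{m}_{k^\ur})$: on each graded piece of the $\mathfrak{m}$-adic filtration $\sigma-1$ acts as $\mathrm{Frob}_q-1$ on the residue field of $k^\ur$, which is surjective because any element of a finite layer has zero trace down to the base field in all sufficiently high layers, and this surjectivity propagates to $\hat E$ by successive approximation. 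Granting these two local facts, Step~(A) gives $Q^{H_k}\cong\frac{H^1(k_\cyc,\Ep)}{E^\pm(k_\cyc)\otimes\Qp/\Zp}$ with dual free of rank $d$, and the freeness criterion then yields that $M=\widehat{Q}$ is free of rank $[k:\Qp]$ over $R=\Zp[[\Gal(k_\cyc^\ur/k)]]$.
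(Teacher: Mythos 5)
Your proof is correct, but it descends along $H_k$ by a genuinely different mechanism than the paper's. The paper never needs your condition (B): instead it computes the invariants of the quotient under \emph{every} open subgroup $H_m=\Gal(k_\cyc^\ur/k_\cyc^{(m)})$ of $H_k$, quoting the proof of \cite[(8.22)]{LP} for the exchange of invariants with quotient and then using Lemma~\ref{lem:notor}, inflation--restriction and Lemma~\ref{lem:cohopm}(b) exactly as you do at level $m=0$; by \cite[Proposition~3.32]{K-O} each invariant group is co-free of corank $[k^{(m)}:\Qp]$ over $\Zp[[\Gal(k_\cyc/k)]]$, and freeness then follows because a surjection $R^{[k:\Qp]}\twoheadrightarrow\WH{Q}$ lifted from level $0$ must have kernel contained in $\bigcap_m(\sigma^{p^m}-1)R^{[k:\Qp]}=0$, where $\sigma$ is a topological generator of $H_k$. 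So the paper trades your torsion-freeness input $M[\sigma-1]=0$ for knowledge of the whole tower of invariants, whereas you trade the tower for the single level $m=0$ plus Step~(B); your Step~(B), via $H^1(H_k,H^1(k_\cyc^\ur,\Ep))=E_2^{1,1}\subseteq H^2(k_\cyc,\Ep)=0$, is a clean observation with no counterpart in the paper. Likewise, your derivation of $H^1(H_k,E^\pm(k_\cyc^\ur)\otimes\Qp/\Zp)=0$ from \eqref{eq:pmexact} is essentially a self-contained re-proof of the step the paper outsources to \cite{LP}.

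Two points in your ``main obstacle'' need repair, though neither is fatal. First, ``$H^1(H_k,\hat E(k^\ur))=0$, equivalently surjectivity of $\sigma-1$ on $\hat E(\mathfrak{m}_{k^\ur})$'' is not an equivalence: $\hat E(k^\ur)$ is not a torsion module, so $H^1(H_k,-)$ is not simply coinvariants. What your filtration/successive-approximation argument actually proves (and what you need) is that $\hat E(k^{(m)})$ is a cohomologically trivial $\Gal(k^{(m)}/k)$-module for every finite unramified layer, from which $H^i\bigl(H_k,\hat E(k^\ur)\otimes\Qp/\Zp\bigr)=\varinjlim_m H^i\bigl(\Gal(k^{(m)}/k),\hat E(k^{(m)})\otimes\Qp/\Zp\bigr)=0$ follows. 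Second, and for the same reason, deducing $H^1(H_k,\hat E(k_\cyc^\ur)\otimes\Qp/\Zp)=0$ requires more than $H^1(H_k,\hat E(k_\cyc^\ur))=0$ (from \cite[Theorem~3.1]{C-G}) and commutation of cohomology with $\otimes\Qp$: you must also kill the boundary into $H^2(H_k,\hat E(k_\cyc^\ur))$, and this group does not vanish for cohomological-dimension reasons alone since the coefficients are non-torsion; it vanishes because it is the direct limit over $n,m$ of $H^2\bigl(\Gal(k_n^{(m)}/k_n),\hat E(k_n^{(m)})\bigr)$, again by cohomological triviality of finite unramified layers. With these two repairs your argument is complete.
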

\begin{proof}
Let $m\ge0$ be an integer and write  $H_m=\Gal(k^\ur_\cyc/k^{(m)}_\cyc)${($=H_k^{p^m}\cong\Zp$)}. The proof for  \cite[(8.22)]{LP} gives
\[
\left(\frac{H^1(k^{\ur}_\cyc,\Ep)}{E^\pm(k^\ur_\cyc)\otimes\Qp/\Zp}\right)^{H_m}\cong\frac{H^1(k^\ur_\cyc,\Ep)^{H_m}}{\left(E^\pm(k^\ur_\cyc)\otimes\Qp/\Zp\right)^{H_m}}=\frac{H^1(k^{(m)}_\cyc,\Ep)}{E^{\pm}(k^{(m)}_\cyc)\otimes\Qp/\Zp},
\]
where the last equality follows from Lemma~\ref{lem:notor}, together with the inflation-restriction exact sequence and Lemma~\ref{lem:cohopm}(b).
By \cite[Proposition~3.32]{K-O}, this gives
\[
\left(\frac{H^1(k^{\ur}_\cyc,\Ep)}{E^\pm(k^\ur_\cyc)\otimes\Qp/\Zp}\right)^{H_m}\cong \WH{\Zp[[\Gal(k_\cyc/k)]]}^{\oplus[k^{(m)}:\Qp]}.
\]
The result follows.
 \end{proof}

\subsection{Plus and minus Selmer groups}\label{sec:Finf}
We work under the same setting as in \S\ref{S:pm} {and further assume that  $p$ splits completely in $F$}. Let $\Finf$ denote the compositum of all $\Zp$-extensions of $F$. Note that $\fcyc\subset \Finf$. The Galois group $\Omega:=\Gal(\Finf/F)$ is an abelian torsion-free pro-$p$ group. In particular, 
\[{\Omega\cong \Zp^d}\]
 for some integer $d\ge1$. {Leopoldt's conjecture predicts that $d$ is given by $r_2+1$, where $r_2$ is the number of pairs of complex embeddings of $F$.} We write $\Lambda(\Omega)$ for its Iwasawa algebra over $\Zp$, that is, $\Zp[[\Omega]]=\varprojlim \Zp[\Omega/U]$, where the inverse limit runs through all open subgroups $U$ of $\Omega$ and the connecting maps are projections.

Let $v$ be a place of $F$ and $w$ a place of $F_\infty$ above $v$.  If $v|p$, then $F_v$ may be identified with $\Qp$ and $F_{\infty,w}/F_v$ is an abelian pro-$p$ extension. By local class field theory, its Galois group is isomorphic to either $\Zp$ or $\Zp^2$, {depending on whether $d=1$ or $d>1$}. In particular,  $F_{\infty,w}$ coincides  with either $F_{v,\cyc}$ or $F_{v,\cyc}^\ur$ under the notation introduced in \S\ref{S:pm2}. If $v\nmid p$, then $F_{\infty,w}$ is the unique unramified $\Zp$-extension of $F_v$. 

Let $v_0\in\sss$ be a place of $F'$. Fix $v\in \ssF$ and a place $w$ of $\Finf$, both of which lying above $v_0$. {As noted above,  $F_{\infty,w}$ may be identified with either $F_{v,\cyc}$ or $F_{v,\cyc}^\ur$.} In both cases, there exist the plus and minus norm groups  $E^\pm(F_{\infty,w})\subset \hat{E}(F_{\infty,w})$. Define
\[
J_v^\pm (E/\Finf)=\bigoplus_{w|v} \frac{H^1(F_{\infty,w},\Ep)}{E^\pm(F_{\infty,w})\otimes\Qp/\Zp},
\]
where $E^\pm(F_{\infty,w})\otimes\Qp/\Zp$ is identified as a subgroup of $H^1(F_{\infty,w},\Ep)$ via the Kummer map as before.
The plus and minus Selmer groups over $\Finf$ are given by
\[
\sel^\pm_p(E/\Finf)=\ker\left(\sel_p(E/\Finf)\rightarrow\bigoplus_{v\in \ssF} J_v^\pm(E/\Finf) \right).
\]

\begin{remark}
In fact, it is  possible to define $2^{\#\ssF}$ signed Selmer groups since we may choose either the plus or minus norm subgroup as local condition for each prime $v\in\ssF$. In \cite{Kim}, Kim studied four such Selmer groups for an elliptic curve $E/\QQ$ when $F$ is an imaginary quadratic field where $p$ splits (so that $\#\ssF=2$). See also \cite{AL}.
\end{remark}

\subsection{Euler characteristics}

Recall that if $A$ is a discrete primary $\Gamma$-module, the $\Gamma$-Euler characteristic of $A$ is said to be finite if $H^0(\Gamma,A)$ and $H^1(\Gamma,A)$ are both finite and it is defined to be
\[
\chi(\Gamma,A)=\frac{\# H^0(\Gamma,A)}{\#H^1(\Gamma,A)}.
\]
\begin{theorem}[{\cite[Theorem~1.2]{Ki}}]
Suppose that $\sel_p(E/F)$ is finite and that $E$ has supersingular reduction at every prime above $p$, then the $\Gamma$-Euler characteristic of $\sel_p^\pm(E/\fcyc)$  is finite and equals, up to a $p$-adic unit
\[
\#\sel_p(E/F)\times\prod_v c_v,
\]
where $v$ runs through all primes of $F$ and $c_v$ denotes the Tamagawa number of $E$ at $v$.
\end{theorem}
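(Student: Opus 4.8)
The plan is to compute $\chi(\Gamma,\sel_p^\pm(E/\fcyc))$ by feeding the finiteness of $\sel_p(E/F)$ into the fundamental diagram \eqref{fundpm}, and to reduce the whole Euler characteristic to a product of purely local factors $q(\gamma_v^\pm):=\#\ker\gamma_v^\pm/\#\coker\gamma_v^\pm$ (and more generally $q(\phi)=\#\ker\phi/\#\coker\phi$ for a map $\phi$ with finite kernel and cokernel).

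First I would record what finiteness of $\sel_p(E/F)$ buys. It makes $E(F)$ finite; since $E$ is supersingular with $a_v=0$ and $F_v$ unramified at every $v\in\ssF$, the formal group is $p$-torsion free and $\#\tilde E(\FF_p)=1+p$ is prime to $p$, so $E(F_v)(p)=0$, and as torsion injects into the completion this gives $\Ep(F)=0$ and hence $\sel_p(E/F)\cong\Sha(E/F)(p)$. Finiteness also gives $H^2(F_S/F,\Ep)=0$, and by Remark~\ref{rk:SameSel} together with the control theorem $\widehat{\sel_p^\pm(E/\fcyc)}$ is $\Lambda(\Gamma)$-torsion, so $\lambda_\cyc^\pm$ is surjective by Proposition~\ref{prop:surj-pm}. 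Thus the defining sequence over $\fcyc$ is short exact, and completing both rows of \eqref{fundpm} by $\coker\lambda$ and $\coker\lambda_\cyc^{\pm,\Gamma}$ produces a morphism of exact four-term sequences. Moreover $\Ep(\fcyc)=0$ (as $p$ is unramified, by the remark following Lemma~\ref{lem:beta}), so $\beta$ is an isomorphism, and $\coker\lambda\cong\widehat{E(F)^*}=0$ by Corollary~\ref{cor:injectdeltadual}. Finally, $H^1(\Gamma,H^1(F_S/\fcyc,\Ep))=0$ identifies $H^1(\Gamma,\sel_p^\pm(E/\fcyc))$ with $\coker\lambda_\cyc^{\pm,\Gamma}$.

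Next I would check that the Euler characteristic is defined: the control theorem makes $H^0(\Gamma,\sel_p^\pm(E/\fcyc))$ finite, and since $\widehat{\sel_p^\pm(E/\fcyc)}$ is $\Lambda(\Gamma)$-torsion this forces $H^1(\Gamma,\sel_p^\pm(E/\fcyc))$ finite as well. I would then run the snake lemma on the morphism of four-term exact sequences in \eqref{fundpm}, obtaining the multiplicativity relation $q(\alpha^\pm)\,q(\gamma^\pm)=q(\beta)\,q(\bar\gamma)$, where $\bar\gamma\colon\coker\lambda\to\coker\lambda_\cyc^{\pm,\Gamma}$ is the induced map. Combining $\#H^0(\Gamma,\sel_p^\pm(E/\fcyc))=\#\sel_p(E/F)/q(\alpha^\pm)$ and $\#H^1(\Gamma,\sel_p^\pm(E/\fcyc))=\#\coker\lambda_\cyc^{\pm,\Gamma}=1/q(\bar\gamma)$ with $q(\beta)=1$ and $\coker\lambda=0$, everything collapses to
\[
\chi(\Gamma,\sel_p^\pm(E/\fcyc))=\#\sel_p(E/F)\cdot\prod_{v\in S}q(\gamma_v^\pm).
\]
It then remains to evaluate the local factors. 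For $v\nmid p$ one has $\gamma_v^\pm=\gamma_v$, which is surjective with $\ker\gamma_v=\Ep(F_{\cyc,v})_{\Gamma_v}$, and the local analysis of \cite{C-S} identifies $\#\ker\gamma_v$ with the $p$-part $c_v^{(p)}$ of the Tamagawa number; good primes $v\nmid p$ contribute a unit, so $\prod_{v\nmid p}q(\gamma_v^\pm)=\prod_v c_v$ up to a $p$-adic unit.

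The hard part will be the factor at $v\mid p$, which is where the supersingular structure is essential. Here I would dualise: local Tate duality gives $\widehat{J_v(E/F)}=E(F_v)^*\cong\Zp^{[F_v:\Qp]}$, while \eqref{eq:quotientpm} shows $\widehat{J_v^\pm(E/\fcyc)}$ is free over $\Lambda(\Gamma)$ of rank $[F_v:\Qp]$, whence $\widehat{J_v^\pm(E/\fcyc)^\Gamma}\cong\Zp^{[F_v:\Qp]}$. Thus $\widehat{\gamma_v^\pm}$ is a map between free $\Zp$-modules of equal rank, and $q(\gamma_v^\pm)$ is a $p$-adic unit precisely when $\widehat{\gamma_v^\pm}$ is an isomorphism. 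Establishing this local control statement — that the plus/minus condition is perfectly matched at level zero — is the main obstacle; I expect to deduce it from the Honda-theoretic description of $E^\pm(F_{n,v})$ and the norm-compatibility underlying \eqref{eq:quotientpm}, in the spirit of the local computations of \cite{Kim,K-O}. Granting it, $q(\gamma_v^\pm)$ is a unit for all $v\mid p$, and combining with the previous paragraph yields $\chi(\Gamma,\sel_p^\pm(E/\fcyc))=\#\sel_p(E/F)\cdot\prod_v c_v$ up to a $p$-adic unit, as claimed.
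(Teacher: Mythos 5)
The first thing to note is that the paper does not prove this statement at all: it is imported verbatim from Kim \cite[Theorem~1.2]{Ki} and used as an input to Theorem~\ref{thm:ES}, so there is no internal proof to compare yours against. Judged on its own terms, your skeleton is correct and is essentially the standard argument (Kim's original one, and also the Coates--Sujatha method that the paper itself runs for Theorem~\ref{thm:ES}): finiteness of $\sel_p(E/F)$ gives $H^2(F_S/F,\Ep)=0$ and, via the control theorem and Remark~\ref{rk:SameSel}, $\Lambda(\Gamma)$-torsionness of $\widehat{\sel_p^\pm(E/\fcyc)}$, whence $\lambda_\cyc^\pm$ is surjective by Proposition~\ref{prop:surj-pm}; then $H^1(\Gamma,\sel_p^\pm(E/\fcyc))\cong\coker\lambda_\cyc^{\pm,\Gamma}$, and the index identity $q(\alpha^\pm)q(\gamma^\pm)=q(\beta)q(\bar\gamma)$ for the two four-term exact rows of \eqref{fundpm}, together with $q(\beta)=1$ and $\coker\lambda=0$, collapses everything to $\chi(\Gamma,\sel_p^\pm(E/\fcyc))=\#\sel_p(E/F)\cdot\prod_{v\in S}q(\gamma_v^\pm)$. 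I checked this bookkeeping and it is right, as is the treatment of bad primes $v\nmid p$, where $\#\ker\gamma_v$ is the $p$-part of $c_v$ by the standard computation in \cite{C-S}.

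The one genuine gap is the step you yourself flag: that $q(\gamma_v^\pm)$ is a unit for $v\in\ssF$. Be aware that this is not a technical loose end but the entire arithmetic content of the theorem; equality of the $\Zp$-ranks of source and target of $\widehat{\gamma_v^\pm}$, which is all that \eqref{eq:quotientpm} gives you, proves nothing about the index. What is actually needed is the pair of local facts: $\left(E^\pm(F_{\cyc,v})\otimes\Qp/\Zp\right)^{\Gamma_v}=\hat E(F_v)\otimes\Qp/\Zp$, which together with $\Ep(F_{\cyc,v})=0$ and Remark~\ref{rk:SameSel} forces $\ker\gamma_v^\pm=0$, and $H^1\left(\Gamma_v,E^\pm(F_{\cyc,v})\otimes\Qp/\Zp\right)=0$, which forces $\coker\gamma_v^\pm=0$. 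These are precisely the local results of Kobayashi \cite[\S8]{Ko}, Kim \cite{Kim} and Kitajima--Otsuki \cite{K-O}, so your deferral points at the right sources; but as written, your proposal is a reduction of the theorem to those statements rather than a proof of them. If you want to close the gap with the tools in this paper, note that Lemma~\ref{lem:cohopm}, Proposition~\ref{prop:free-quotient} and Proposition~\ref{prop:gamma-ss} prove the exact analogues one level up (for the map $\gamma_{v,\infty}^\pm$ from $\fcyc$ to $\Finf$), using the exact sequence \eqref{eq:pmexact} and the vanishing result of \cite{C-G}; specializing that argument to the layer $F\subset\fcyc$ is how the missing local isomorphism is established.
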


We now study the $\Omega$-Euler characteristic of $\sel_p^\pm(E/\Finf)$, where $\Omega=\Gal(\Finf/F)$ and $\Finf$ is the extension considered in \S\ref{sec:Finf}. Let $H=\Gal(\Finf/\fcyc)
{\cong \Zp^{d-1}}$ (so that $\Gamma\cong \Omega/H$). {Let $w$ be a finite place of $F_\infty$. Recall that the decomposition group of $w|p$ in $H$ is either $\Zp$ or $0$, depending on whether $d>1$ or $d=1$. It is always $0$ when $w\nmid p$.}

Consider the following commutative diagram:

\begin{equation}\label{fundinf}
\begin{CD}
0 @>>>  \sel^\pm_p(E/\Finf)^H @>>> H^1(F_S/\Finf, E_{p^{\infty}})^H @>{{\lambda}^{H,\pm}_{\infty}}>> \underset{v\in S}{\bigoplus}  J_v^\pm(E/\Finf)^H\\
& & @A{\alpha_\infty^{\pm}}AA                                    @A{\beta_\infty}AA                                                       @A{\gamma_\infty^\pm=\oplus \gamma_{v,\infty}^\pm}AA\\
0 @>>> \sel^\pm_p(E/\fcyc) @>>> H^1(F_S/\fcyc, E_{p^{\infty}}) @>{\lambda_\cyc^{\pm}}>> \underset{v \in S}{\bigoplus} J_v^\pm(E/\fcyc) .
\end{CD}
\end{equation}

\begin{lemma}\label{lem:betainfty}
The map $\beta_\infty$ in \eqref{fundinf} is an isomorphism.
\end{lemma}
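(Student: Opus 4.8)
The plan is to show that $\beta_\infty$ is an isomorphism by invoking the Hochschild--Serre spectral sequence for the extension $\Finf/\fcyc$ with Galois group $H=\Gal(\Finf/\fcyc)\cong\Zp^{d-1}$, applied to the $G_{\fcyc}$-module $\Ep$. The relevant five-term exact sequence begins
\[
0\to H^1(H,\Ep(\Finf))\to H^1(F_S/\fcyc,\Ep)\stackrel{\beta_\infty}{\to} H^1(F_S/\Finf,\Ep)^H\to H^2(H,\Ep(\Finf)),
\]
so it suffices to show that both $H^1(H,\Ep(\Finf))$ and the relevant $H^2$ term vanish. The cleanest route is to prove that $\Ep(\Finf)=0$ outright, which kills the kernel immediately and, combined with a cohomological dimension argument, controls the cokernel as well.

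First I would establish that $\Ep(\Finf)=0$. Since $p$ splits completely in $F$, each place $w$ of $\Finf$ above $p$ sits over a completion $F_v\cong\Qp$, and the local extension $F_{\infty,w}$ is identified in \S\ref{sec:Finf} with either $F_{v,\cyc}$ or $F_{v,\cyc}^\ur$ in the notation of \S\ref{S:pm2}. By Lemma~\ref{lem:notor} we have $\Ep(k_\cyc^\ur)=0$ for the relevant local field $k$, and the cyclotomic case is covered by the Imai-type vanishing (or \cite[Proposition~8.7]{Ko}, noting $p$ is unramified). Since $\Ep(\Finf)$ injects into its localization at any such $w$, it must vanish. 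This gives $H^i(H,\Ep(\Finf))=0$ for all $i\ge 0$, so $\ker\beta_\infty=0$ and the obstruction to surjectivity coming from $H^2(H,\Ep(\Finf))$ disappears.

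To finish the surjectivity of $\beta_\infty$, the five-term sequence alone is not quite enough because the cokernel of $\beta_\infty$ injects into $H^2(H,\Ep(\Finf))$ only when the higher differentials are controlled; with $\Ep(\Finf)=0$ this $H^2$ term is already zero, so $\beta_\infty$ is surjective as well. Alternatively, and more robustly, I would argue directly from the full spectral sequence $H^p(H,H^q(F_S/\Finf,\Ep))\Rightarrow H^{p+q}(F_S/\fcyc,\Ep)$: the edge map onto $H^1(F_S/\Finf,\Ep)^H$ has cokernel landing in $H^2(H,\Ep(\Finf))=0$, so $\beta_\infty$ is an isomorphism. This mirrors exactly the structure of Lemma~\ref{lem:beta}, where the kernel and cokernel of $\beta$ were identified with $H^1(\Gamma,\Ep(\fcyc))$ and $H^2(\Gamma,\Ep(\fcyc))$ respectively.

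The main obstacle I anticipate is the local vanishing input $\Ep(\Finf)=0$: one must verify carefully that \emph{every} place $w$ of $\Finf$ above $p$ gives a local field to which Lemma~\ref{lem:notor} (or its cyclotomic counterpart) applies, using the complete splitting of $p$ in $F$ to reduce to the base field $\Qp$ and the identification of $F_{\infty,w}$ with $k_\cyc$ or $k_\cyc^\ur$. Once that global torsion vanishing is in hand, the spectral sequence formalism is routine and identical in spirit to the cyclotomic case already treated in Lemma~\ref{lem:beta}.
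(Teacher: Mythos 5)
Your proof is correct and follows essentially the same route as the paper: establish $\Ep(\Finf)=0$ from the local vanishing $\Ep(k_\cyc^\ur)=0$ of Lemma~\ref{lem:notor}, then conclude via inflation--restriction/Hochschild--Serre exactly as in Lemma~\ref{lem:beta}. One small simplification: your worry about checking \emph{every} place above $p$ is unnecessary, since $\Ep(\Finf)$ injects into $\Ep(F_{\infty,w})$ for a \emph{single} place $w$ above a prime of $\ssF$ (which is nonempty by assumption), and at such a place $F_{\infty,w}\subseteq F_{v,\cyc}^{\ur}$ so Lemma~\ref{lem:notor} alone suffices, with no separate treatment of the cyclotomic case or of ordinary primes above $p$.
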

\begin{proof}
Lemma~\ref{lem:notor} says that $\Ep(F_{\infty,w})=0$ for any place $w$ of $F_\infty$ lying above a prime in $\ssF$. In particular, it implies that $\Ep(F_\infty)=0$. Therefore, $\beta_\infty$ is an isomorphism by the inflation-restriction exact sequence.
\end{proof}

\begin{corollary}
If $\lambda_{\cyc}^{\pm}$ is  surjective, then the map $\alpha_\infty^\pm$ in \eqref{fundinf} is injective and its cokernel is isomorphic to $\ker\gamma_\infty^\pm$.
\end{corollary}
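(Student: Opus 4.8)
The plan is to apply the snake lemma to the commutative diagram \eqref{fundinf}, reading its bottom row as the source and its top row as the target of the morphism of complexes furnished by the vertical maps $(\alpha_\infty^\pm,\beta_\infty,\gamma_\infty^\pm)$.

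First I would record the exactness properties of the two rows. The bottom row is a \emph{short} exact sequence: the inclusion $\sel^\pm_p(E/\fcyc)\hookrightarrow H^1(F_S/\fcyc,E_{p^\infty})$ is injective by the definition of the Selmer group, while the standing hypothesis that $\lambda_\cyc^\pm$ is surjective supplies exactness on the right. The top row, by contrast, is only left-exact: it is the defining sequence for $\sel^\pm_p(E/\Finf)$ after applying the (left-exact) $H$-invariants functor, so that $\sel^\pm_p(E/\Finf)^H=(\ker\lambda_\infty^\pm)^H=\ker\lambda_\infty^{H,\pm}$ gives exactness at the first two terms, where $\lambda_\infty^\pm$ denotes the global-to-local map defining $\sel^\pm_p(E/\Finf)$. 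This is exactly the configuration --- source right-exact, target left-exact --- in which the snake lemma produces the six-term exact sequence
\[
0\to\ker\alpha_\infty^\pm\to\ker\beta_\infty\to\ker\gamma_\infty^\pm\stackrel{\partial}{\to}\coker\alpha_\infty^\pm\to\coker\beta_\infty\to\coker\gamma_\infty^\pm,
\]
the leading $0$ coming from the injectivity of $\sel^\pm_p(E/\fcyc)\hookrightarrow H^1(F_S/\fcyc,E_{p^\infty})$.

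Next I would feed in Lemma~\ref{lem:betainfty}, which asserts that $\beta_\infty$ is an isomorphism, so that $\ker\beta_\infty=\coker\beta_\infty=0$. Substituting these vanishings collapses the sequence: the segment $0\to\ker\alpha_\infty^\pm\to\ker\beta_\infty=0$ forces $\ker\alpha_\infty^\pm=0$, giving the injectivity of $\alpha_\infty^\pm$; and the segment $0=\ker\beta_\infty\to\ker\gamma_\infty^\pm\stackrel{\partial}{\to}\coker\alpha_\infty^\pm\to\coker\beta_\infty=0$ shows that the connecting homomorphism $\partial$ is an isomorphism $\ker\gamma_\infty^\pm\stackrel{\sim}{\to}\coker\alpha_\infty^\pm$. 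Both assertions of the corollary then follow at once.

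The argument is essentially formal, and I do not expect a genuine obstacle; the only points demanding care are bookkeeping ones. One must verify that the top row is exact at $H^1(F_S/\Finf,E_{p^\infty})^H$, not merely at the Selmer term, which is a consequence of the left-exactness of the fixed-point functor applied to the defining sequence over $\Finf$. One must also resist assuming surjectivity of $\lambda_\infty^{H,\pm}$: it is precisely because the snake lemma tolerates a merely left-exact target row that no such hypothesis is required. The surjectivity of $\lambda_\cyc^\pm$ enters only to render the bottom row short exact, which is what guarantees the \emph{full} six-term sequence rather than a truncated version that would not pin down $\coker\alpha_\infty^\pm$.
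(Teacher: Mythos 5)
Your proof is correct and takes essentially the same approach as the paper, whose entire proof is the one-line remark that the claim follows from the snake lemma and Lemma~\ref{lem:betainfty}; your argument simply spells out that line in full. The bookkeeping you highlight --- the bottom row being short exact because $\lambda_\cyc^\pm$ is surjective, the top row being left-exact as the $H$-invariants of the defining sequence over $F_\infty$, and $\beta_\infty$ being an isomorphism collapsing the six-term sequence --- is exactly what the paper's citation of the snake lemma presupposes.
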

\begin{proof}
This follows from the snake lemma and Lemma~\ref{lem:betainfty}.
\end{proof}

\begin{proposition}\label{prop:gamma-ss}
The map $\gamma_{v,\infty}^\pm$  in \eqref{fundinf} is an isomorphism for all $v\in\ssF$.
\end{proposition}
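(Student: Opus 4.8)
The plan is to reduce this global statement to the purely local computation carried out in \S\ref{S:pm2}. First I would dispose of the trivial case $d=1$: then $\Finf=\fcyc$, so $H=\Gal(\Finf/\fcyc)=0$ and $\gamma_{v,\infty}^\pm$ is the identity map, hence an isomorphism. So I assume $d>1$ from now on.

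Fix $v\in\ssF$. Since $p$ splits completely in $F$ we have $F_v=\Qp$, so we are in the situation of \S\ref{S:pm2} with $k=F_v=\Qp$. Because the cyclotomic $\Zp$-extension is totally ramified at $p$, there is a unique prime $w_\cyc$ of $\fcyc$ above $v$, and every prime $w$ of $\Finf$ above $v$ lies above $w_\cyc$; the group $H$ permutes these primes transitively, with stabilizer the decomposition group $H_w$. As recorded just before the diagram \eqref{fundinf}, for $d>1$ one has $F_{\infty,w}=F_{v,\cyc}^\ur=k_\cyc^\ur$ and $H_w=\Gal(k_\cyc^\ur/k_\cyc)=H_k\cong\Zp$. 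Consequently $J_v^\pm(E/\Finf)$ is, as an $H$-module, (co)induced from $H_w$ acting on the single local quotient $\frac{H^1(F_{\infty,w},\Ep)}{E^\pm(F_{\infty,w})\otimes\Qp/\Zp}$, and Shapiro's lemma identifies
\[
J_v^\pm(E/\Finf)^H\cong\left(\frac{H^1(k_\cyc^\ur,\Ep)}{E^\pm(k_\cyc^\ur)\otimes\Qp/\Zp}\right)^{H_k},
\]
the isomorphism being induced by restriction and hence compatible with the map $\gamma_{v,\infty}^\pm$ of \eqref{fundinf}.

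It then remains to evaluate the $H_k$-invariants of this local quotient, which is exactly the $m=0$ instance of the computation inside the proof of Proposition~\ref{prop:free-quotient}. Using $\Ep(k_\cyc^\ur)=0$ (Lemma~\ref{lem:notor}), the inflation-restriction sequence gives $H^1(k_\cyc^\ur,\Ep)^{H_k}=H^1(k_\cyc,\Ep)$, while Lemma~\ref{lem:cohopm}(b) gives $(E^\pm(k_\cyc^\ur)\otimes\Qp/\Zp)^{H_k}=E^\pm(k_\cyc)\otimes\Qp/\Zp$. Combining these, exactly as in loc.\ cit., yields
\[
\left(\frac{H^1(k_\cyc^\ur,\Ep)}{E^\pm(k_\cyc^\ur)\otimes\Qp/\Zp}\right)^{H_k}\cong\frac{H^1(k_\cyc,\Ep)}{E^\pm(k_\cyc)\otimes\Qp/\Zp}=J_v^\pm(E/\fcyc).
\]
Tracing through the identifications, the composite isomorphism is precisely $\gamma_{v,\infty}^\pm$, which is therefore an isomorphism.

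The main obstacle, and the step I would state most carefully, is the (co)induction reduction: when $d>2$ there are infinitely many primes $w$ above $v$, so $J_v^\pm(E/\Finf)$ must be treated as the (co)induced $H$-module arising as a direct limit of the finite-level induced modules, rather than as a naive finite direct sum, and one must verify that the natural restriction map $\gamma_{v,\infty}^\pm$ genuinely corresponds under Shapiro's lemma to the projection onto the $H_w$-invariants at a single place $w$. Once this bookkeeping is settled, the remaining local input is supplied entirely by Lemmas~\ref{lem:notor} and \ref{lem:cohopm} together with the argument of Proposition~\ref{prop:free-quotient}.
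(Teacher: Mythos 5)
Your proof is correct and follows essentially the same route as the paper: the paper's own argument simply fixes a place $w$ of $F_\infty$ above $v$, invokes the $m=0$ case of the computation in the proof of Proposition~\ref{prop:free-quotient} (i.e.\ Lemma~\ref{lem:notor}, inflation--restriction and Lemma~\ref{lem:cohopm}(b)) to identify $\left(H^1(F_{\infty,w},\Ep)/(E^\pm(F_{\infty,w})\otimes\Qp/\Zp)\right)^{H_w}$ with $H^1(F_{v,\cyc},\Ep)/(E^\pm(F_{v,\cyc})\otimes\Qp/\Zp)$, and concludes that $\gamma_{v,\infty}^\pm$ is an isomorphism. Your additional steps---disposing of $d=1$ separately and making explicit the Shapiro-lemma/coinduction bookkeeping needed when $d>2$ (where there are infinitely many places of $F_\infty$ above $v$, so $J_v^\pm(E/\Finf)$ must be read as the direct limit of finite-level semi-local modules rather than a naive direct sum)---are sound refinements of precisely the point that the paper's two-line proof leaves implicit.
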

\begin{proof}
Let $w$ be a place of $\Finf$ lying above $v$ and write $H_w$ for the Galois group of the extension $F_{\infty,w}/F_{v,\cyc}$.  Then, the proof of Proposition~\ref{prop:free-quotient} gives
\[
\left(\frac{H^1(F_{\infty,w},\Ep)}{E^\pm(F_{\infty,w})\otimes\Qp/\Zp}\right)^{H_w}\cong\frac{H^1({F_{v,\cyc}},\Ep)}{E^\pm(F_{v,\cyc})\otimes\Qp/\Zp},
\]
which implies that $\gamma_{v,\infty}^\pm$ is the identity map.
\end{proof}

\begin{lemma}\label{lem:gamma-p-ord}
Let $v\in S\setminus\ssF$ such that $v|p$. Then $\gamma_{v,\infty}^\pm$ is surjective, 
and $\ker\gamma_{v,\infty}^\pm$ is  $\Lambda(\Gamma)$-cotorsion.
\end{lemma}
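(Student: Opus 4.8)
The plan is to mirror the proof of Proposition~\ref{prop:gamma-ss}, reducing everything to a purely local computation at a single prime and then substituting the good ordinary reduction structure for the plus/minus theory. Note first that since $v\mid p$ but $v\notin\ssF$, the curve $E$ has good \emph{ordinary} reduction at $v$, and the local conditions defining $J_v$ and $J_v^\pm$ coincide, so $\gamma_{v,\infty}^\pm=\gamma_{v,\infty}$. I would fix a place $w$ of $\Finf$ above $v$ and set $H_w=\Gal(F_{\infty,w}/F_{v,\cyc})$. Since $F_v=\Qp$, local class field theory gives $F_{\infty,w}=F_{v,\cyc}$ or $F_{v,\cyc}^\ur$, so $H_w\cong\Zp$ when $d>1$ and $H_w=0$ when $d=1$ (in the latter case $\Finf=\fcyc$ and there is nothing to prove). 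Exactly as in Proposition~\ref{prop:gamma-ss}, $J_v(E/\Finf)^H$ is computed through the single factor $H^1(F_{\infty,w},\Ep)^{H_w}$ modulo the local condition, so it suffices to analyse the restriction map $H^1(F_{v,\cyc},E)(p)\to H^1(F_{\infty,w},E)(p)^{H_w}$.

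The crucial input is Coates--Greenberg theory: for a prime of good ordinary reduction and a deeply ramified field $K\supseteq F_{v,\cyc}$, one identifies $H^1(K,E)(p)$ with $H^1(K,D)$, where $0\to C\to \Ep\to D\to 0$ is the ordinary filtration of $G_{F_v}$-modules and $D$ is the unramified, hence \emph{$p$-primary torsion}, quotient (see \cite{C-G}). Applying this to $K=F_{v,\cyc}$ and $K=F_{\infty,w}$ converts the map above into the restriction map $H^1(F_{v,\cyc},D)\to H^1(F_{\infty,w},D)^{H_w}$, now with torsion coefficients. This is the essential difference from Proposition~\ref{prop:gamma-ss}: passing to the torsion quotient $D$ absorbs the Kummer image (equivalently, the non-torsion local points $E(F_{\infty,w})$) that would otherwise obstruct a clean cohomological-dimension argument.

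With torsion coefficients secured, I would run the inflation--restriction sequence for $F_{\infty,w}/F_{v,\cyc}$ with group $H_w\cong\Zp$. Since $D$ is $p$-primary and $\mathrm{cd}_p(H_w)=1$, the term $H^2(H_w,D(F_{\infty,w}))$ vanishes, producing the short exact sequence
\[
0\to H^1(H_w,D(F_{\infty,w}))\to H^1(F_{v,\cyc},D)\to H^1(F_{\infty,w},D)^{H_w}\to 0.
\]
Surjectivity of $\gamma_{v,\infty}^\pm$ is precisely the surjectivity of the right-hand map, which settles the first assertion. For the kernel, because $H_w\cong\Zp$ is procyclic, $H^1(H_w,D(F_{\infty,w}))=D(F_{\infty,w})_{H_w}$ is the module of $H_w$-coinvariants. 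As $D(F_{\infty,w})\subseteq D\cong\Qp/\Zp$ (up to an unramified twist) is cofinitely generated over $\Zp$ of corank at most $1$, so is its quotient of coinvariants; hence $\ker\gamma_{v,\infty}^\pm$ is cofinitely generated over $\Zp$. Its Pontryagin dual is then finitely generated over $\Zp\subseteq\Lambda(\Gamma)$, and any such $\Lambda(\Gamma)$-module is torsion, so $\ker\gamma_{v,\infty}^\pm$ is $\Lambda(\Gamma)$-cotorsion, as required.

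I expect the main obstacle to be the careful justification of the Coates--Greenberg identification over the infinite, deeply ramified fields $F_{v,\cyc}$ and $F_{\infty,w}$ (including the vanishing of the relevant $H^2(K,C)$, which is where one uses that these fields have $p$-cohomological dimension one), together with the bookkeeping needed to carry the $\Lambda(\Gamma)$-module structure through the single-prime reduction in the style of Proposition~\ref{prop:gamma-ss}. Once the coefficients have been replaced by the torsion module $D$, the cohomological-dimension vanishing and the coinvariants computation are routine.
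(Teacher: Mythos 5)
Your proposal is correct and follows essentially the same route as the paper: the paper also reduces to a single place $w$, invokes Coates--Greenberg (their Proposition~4.8, with your $D$ realized concretely as $\tilde E_w[p^\infty]$, the $p$-primary torsion of the reduced curve) to replace the local Kummer quotients by $H^1(\cdot,\tilde E_w[p^\infty])$, and then applies inflation--restriction for $\Gal(F_{\infty,w}/F_{v,\cyc})$, which is trivial or $\Zp$, so that the $H^2$ obstruction vanishes and the kernel $H^1(H_w,\tilde E_w(\tilde F_{\infty,w})[p^\infty])$ is cofinitely generated over $\Zp$, hence $\Lambda(\Gamma)$-cotorsion after summing over the finitely many places above $p$.
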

\begin{proof}Let $w$ be a place of $F_\infty$ lying above $v$.
Recall from \cite[Proposition~4.8]{C-G} that 
\[
\frac{H^1(F_{\infty,w},\Ep)}{E(F_{\infty,w})\otimes\Qp/\Zp}=H^1(F_{\infty,w},\tilde E_w[p^\infty]),\quad \frac{H^1(F_{v,\cyc},\Ep)}{E(F_{v,\cyc})\otimes\Qp/\Zp}=H^1(F_{v,\cyc},\tilde E_w[p^\infty]),
\]
where $\tilde E_w$ denotes the reduced curve of $E$ at $w$. By the inflation-restriction exact sequence, the cokernel of 
\[
H^1(F_{v,\cyc},\tilde E_w[p^\infty])\rightarrow H^1(F_{\infty,w},\tilde E_w[p^\infty])^{\Gal(F_{\infty,w}/F_{v,\cyc})}
\]
is given by $$H^2\left(F_{\infty,w}/F_{v,\cyc},\tilde E_w(\tilde F_{\infty,w})[p^\infty]\right),$$ where $\tilde F_{\infty,w}$ is the residue field of $ F_{\infty,w}$. This is zero since $F_{\infty,w}/F_{v,\cyc}$ is either trivial or a $\Zp$-extension.
The kernel of this map is given by $H^1(F_{\infty,w}/F_{v,\cyc},\tilde E_w(\tilde F_{\infty,w})[p^\infty])$, which is cofinitely generated over $\Zp$.  There are only finitely many  places of $\fcyc$ lying above $p$, so $\ker\gamma_{v,\infty}^\pm$ is also cofinitely generated over $\Zp$.
\end{proof}

\begin{lemma}\label{lem:gamma-not-p}
If $v\in S$ and $v\nmid p$, then the map $\gamma_{v,\infty}^\pm$ is an isomorphism.
\end{lemma}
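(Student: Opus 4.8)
The plan is to reduce everything to the fact, recalled just before the diagram~\eqref{fundinf}, that the decomposition group $H_w=\Gal(F_{\infty,w}/F_{\cyc,w'})$ of a place $w\nmid p$ of $\Finf$ in $H$ is trivial (here $w'$ denotes the place of $\fcyc$ below $w$); equivalently $F_{\infty,w}=F_{\cyc,w'}$. Since $v\nmid p$ we have $v\notin\ssF$, so by definition $J_v^\pm=J_v$ and $\gamma_{v,\infty}^\pm=\gamma_{v,\infty}$ is simply the restriction map $J_v(E/\fcyc)\to J_v(E/\Finf)^H$ appearing in \eqref{fundinf}. By Remark~\ref{rk:J}, both sides are assembled from the groups $H^1(\cdot_w,E_{p^\infty})$, so the statement to prove is purely local at the places above $v$.

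First I would decompose $\gamma_{v,\infty}$ according to the finitely many places $w'$ of $\fcyc$ lying above $v$; there are finitely many because $\Gamma$ is procyclic and the decomposition group at $v$ is open in $\Gamma$. Fixing such a $w'$ together with a place $w$ of $\Finf$ above it, the places of $\Finf$ over $w'$ form a single $H$-orbit, and the corresponding summand $\bigoplus_{w\mid w'}H^1(F_{\infty,w},E_{p^\infty})$ of $J_v(E/\Finf)$ is the module coinduced from $H_w$. Shapiro's lemma then identifies its $H$-invariants with $H^1(F_{\infty,w},E_{p^\infty})^{H_w}$, and under this identification the $w'$-component of $\gamma_{v,\infty}$ becomes the restriction map $H^1(F_{\cyc,w'},E_{p^\infty})\to H^1(F_{\infty,w},E_{p^\infty})^{H_w}$, exactly as in the proof of Lemma~\ref{lem:gamma-p-ord}.

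Next I would feed in $H_w=0$. This forces $F_{\infty,w}=F_{\cyc,w'}$, so the restriction map above is literally the identity; equivalently, its kernel and cokernel sit inside $H^1(H_w,-)$ and $H^2(H_w,-)$ via inflation-restriction, and these vanish because $H_w$ is trivial. Summing over the finitely many $w'\mid v$ then yields that $\gamma_{v,\infty}$ is an isomorphism, completing the proof; the case $d=1$, where $\Finf=\fcyc$ and $\gamma_{v,\infty}$ is the identity outright, is trivial.

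The only genuinely delicate point is the bookkeeping concealed in the second step: when $d>1$ the group $H\cong\Zp^{d-1}$ is infinite, so each place $w'$ of $\fcyc$ splits into infinitely many places of $\Finf$, and one must recognise that $J_v(E/\Finf)$ is the coinduced module (locally constant sections over the profinite $H$-orbit) rather than the naive direct sum, so that passing to $H$-invariants collapses the orbit back to a single copy through Shapiro. Once this coinduced structure is in place, the vanishing of $H_w$ makes the remaining verification immediate.
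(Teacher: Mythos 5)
Your proof is correct and takes essentially the same route as the paper's: the entire content is that the decomposition group of a place $w\nmid p$ in $H$ is trivial, so $F_{\infty,w}=F_{\cyc,\nu}$ and $\gamma_{v,\infty}^\pm$ is the identity. The paper's proof is exactly this one line; the Shapiro's-lemma/coinduction bookkeeping you make explicit in the completely-split case is left implicit there.
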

\begin{proof}
Let $\nu$ be a place of $\fcyc$ lying above $v$ and let $w$ be a place of  $\Finf$ lying above $\nu$. Then, $F_{\infty,w}=F_{\cyc,\nu}$ is the unique $\Zp$-extension of $F_v$. In particular, $\gamma_{v,\infty}^\pm$ is nothing but the identity map.
\end{proof}

\begin{proposition}\label{prop:surjlambdaH}
Assume that  $\WH{\sel_p^\pm(E/\fcyc)}$ is $\Lambda(\Gamma)$-torsion. Then the map $\lambda_\infty^{H,\pm}$ in \eqref{fundinf} is surjective.
\end{proposition}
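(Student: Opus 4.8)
The plan is to deduce the surjectivity of $\lambda_\infty^{H,\pm}$ directly from the commutativity of the right-hand square of the fundamental diagram \eqref{fundinf}, by assembling the surjectivity of the two maps $\lambda_\cyc^\pm$ and $\gamma_\infty^\pm$ that enter it. This mirrors the style of reasoning already used in the proofs of Propositions~\ref{prop:equivpm} and \ref{partcon}, where surjectivity of a global-to-local map is transported across a commutative square.

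First I would invoke Proposition~\ref{prop:surj-pm}: under the standing hypothesis that $\widehat{\sel_p^\pm(E/\fcyc)}$ is $\Lambda(\Gamma)$-torsion, the bottom map $\lambda_\cyc^\pm$ is surjective. Next I would check that the right-hand vertical map $\gamma_\infty^\pm=\oplus_{v\in S}\gamma_{v,\infty}^\pm$ is surjective, treating the primes of $S$ case by case: for $v\in\ssF$ it is an isomorphism by Proposition~\ref{prop:gamma-ss}; for $v\in S\setminus\ssF$ with $v\mid p$ it is surjective by Lemma~\ref{lem:gamma-p-ord}; and for $v\nmid p$ it is again an isomorphism by Lemma~\ref{lem:gamma-not-p}. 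Since a finite direct sum of surjections is a surjection, $\gamma_\infty^\pm$ is onto $\bigoplus_{v\in S}J_v^\pm(E/\Finf)^H$.

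Finally, commutativity of the right square in \eqref{fundinf} yields the identity $\lambda_\infty^{H,\pm}\circ\beta_\infty=\gamma_\infty^\pm\circ\lambda_\cyc^\pm$. As both $\lambda_\cyc^\pm$ and $\gamma_\infty^\pm$ are surjective, the composite on the right is surjective, hence so is $\lambda_\infty^{H,\pm}\circ\beta_\infty$; a fortiori $\lambda_\infty^{H,\pm}$ is surjective. Note that for this conclusion I only need $\beta_\infty$ to be a map into $H^1(F_S/\Finf,E_{p^\infty})^H$, so the stronger statement of Lemma~\ref{lem:betainfty} is not required here. There is no essential obstacle: the argument is a short diagram chase, and the only point demanding care is confirming that Proposition~\ref{prop:gamma-ss} together with Lemmas~\ref{lem:gamma-p-ord} and \ref{lem:gamma-not-p} genuinely exhaust every prime $v\in S$, so that each summand $J_v^\pm(E/\Finf)^H$ is hit and the direct-sum map $\gamma_\infty^\pm$ is truly onto.
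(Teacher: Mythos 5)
Your proposal is correct and follows essentially the same route as the paper's own proof: both reduce to showing that the composite $\gamma_\infty^\pm\circ\lambda_\cyc^\pm$ is surjective, invoking Proposition~\ref{prop:surj-pm} for $\lambda_\cyc^\pm$ and combining Proposition~\ref{prop:gamma-ss} with Lemmas~\ref{lem:gamma-p-ord} and \ref{lem:gamma-not-p} for $\gamma_\infty^\pm$. Your observation that only commutativity of the square (not the full strength of Lemma~\ref{lem:betainfty}) is needed matches the paper, which likewise does not use $\beta_\infty$ beyond this.
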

\begin{proof}
It suffices to show that $\gamma_\infty^\pm\circ\lambda_\cyc^\pm$ is surjective.
Proposition~\ref{prop:surj-pm} says that $\lambda_\cyc^\pm$ is surjective.
On combining Proposition~\ref{prop:gamma-ss} with Lemmas~\ref{lem:gamma-not-p} and \ref{lem:gamma-p-ord}, we deduce that $\gamma_\infty^\pm$ is surjective. Hence the result.
\end{proof}

\begin{corollary}\label{cor:H1Omega}
Assume that  $\sel_p(E/F)$ is finite, then 
\begin{enumerate}[(a)]
\item $H^1(H,\sel_p^\pm(E/\Finf))=0$;
\item $H^1(\Omega,\sel_p^\pm(E/\Finf))=H^1(\Gamma,\sel_p^\pm(E/\Finf)^H)$.
\end{enumerate}
\end{corollary}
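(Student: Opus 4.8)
The plan is to establish part (a) and then deduce part (b) formally. Throughout I use that the finiteness of $\sel_p(E/F)$ forces $\widehat{\sel_p^\pm(E/\fcyc)}$ to be $\Lambda(\Gamma)$-torsion (by the control theorem recorded in the remark after Proposition~\ref{prop:surj-pm}). Proposition~\ref{prop:surj-pm} then yields both the surjectivity of $\lambda_\cyc^\pm$ and the vanishing $H^2(F_S/\fcyc,\Ep)=0$, while Proposition~\ref{prop:surjlambdaH} gives the surjectivity of $\lambda_\infty^{H,\pm}$ in \eqref{fundinf}. These are the only inputs I will need.

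The technical core of part (a) is the vanishing
\[
H^1\!\left(H, H^1(F_S/\Finf,\Ep)\right)=0.
\]
I would deduce this from the Hochschild--Serre spectral sequence $E_2^{p,q}=H^p\!\left(H,H^q(F_S/\Finf,\Ep)\right)\Rightarrow H^{p+q}(F_S/\fcyc,\Ep)$ attached to $1\to\Gal(F_S/\Finf)\to\Gal(F_S/\fcyc)\to H\to1$. Since $\Ep(\Finf)=0$ (Lemma~\ref{lem:notor}, as used in the proof of Lemma~\ref{lem:betainfty}), the corner term $E_2^{3,0}=H^3(H,\Ep(\Finf))$ vanishes, so no nonzero differential enters or leaves $E_2^{1,1}$; hence $E_\infty^{1,1}=E_2^{1,1}$ is a subquotient of $H^2(F_S/\fcyc,\Ep)=0$. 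Note that this requires no bound on the cohomological dimension of $H$, so it applies even when $\dim_{\Zp}H=d-1$ is large.

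To finish (a) I would take $H$-cohomology of the defining short exact sequence
\[
0\to\sel_p^\pm(E/\Finf)\to H^1(F_S/\Finf,\Ep)\xrightarrow{\lambda_\infty^\pm}\mathcal I_\infty\to0,
\]
where $\mathcal I_\infty:=\Img(\lambda_\infty^\pm)\subseteq\bigoplus_{v\in S}J_v^\pm(E/\Finf)$. Combined with the vanishing above, the long exact sequence collapses to an isomorphism $H^1(H,\sel_p^\pm(E/\Finf))\cong\coker\!\left(H^1(F_S/\Finf,\Ep)^H\to\mathcal I_\infty^H\right)$. Here the displayed map is $\lambda_\infty^{H,\pm}$, which by Proposition~\ref{prop:surjlambdaH} is already surjective onto the a priori larger group $\bigoplus_{v\in S}J_v^\pm(E/\Finf)^H\supseteq\mathcal I_\infty^H$; since its image lies in $\mathcal I_\infty^H$, both inclusions are forced to be equalities and the cokernel vanishes. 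This proves $H^1(H,\sel_p^\pm(E/\Finf))=0$. Part (b) is then immediate: the five-term inflation--restriction sequence for $1\to H\to\Omega\to\Gamma\to1$ applied to the $\Omega$-module $\sel_p^\pm(E/\Finf)$ begins
\[
0\to H^1\!\left(\Gamma,\sel_p^\pm(E/\Finf)^H\right)\to H^1\!\left(\Omega,\sel_p^\pm(E/\Finf)\right)\to H^1\!\left(H,\sel_p^\pm(E/\Finf)\right)^\Gamma,
\]
and the last term vanishes by part (a), so the inflation map is an isomorphism.

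The step I expect to be the main obstacle is the bookkeeping in the previous paragraph: $\lambda_\infty^\pm$ need not be surjective onto the whole local term $\bigoplus_{v\in S}J_v^\pm(E/\Finf)$ over $\Finf$ itself, so the long exact sequence must be set up with its actual image $\mathcal I_\infty$, and it is precisely the $H$-invariant surjectivity furnished by Proposition~\ref{prop:surjlambdaH} that pins down $\mathcal I_\infty^H$ and kills the relevant cokernel. Everything else is a formal consequence of the spectral sequence vanishing and inflation--restriction.
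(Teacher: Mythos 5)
Your proof is correct, but it takes a genuinely different route through the key bookkeeping step. The paper first upgrades everything to the $\Finf$-level: from the finiteness of $\sel_p(E/F)$ and Kim's control theorem over $\Omega$ (Theorem~1.1 of \cite{Kim}, with the Kitajima--Otsuki refinement \cite{K-O}) it deduces via Nakayama's lemma that $\WH{\sel_p^\pm(E/\Finf)}$ is $\Lambda(\Omega)$-torsion, then reruns the proof of Proposition~\ref{prop:surj-pm} over $\Finf$ to conclude that $\lambda_\infty^\pm$ itself is surjective and $H^2(F_S/\Finf,\Ep)=0$; with the resulting short exact sequence $0\to\sel_p^\pm(E/\Finf)\to H^1(F_S/\Finf,\Ep)\to\bigoplus_{v\in S}J_v^\pm(E/\Finf)\to 0$ in hand, Kobayashi's control theorem, Proposition~\ref{prop:surjlambdaH} and the argument of Proposition~\ref{prop:equiv} give (a), and Hochschild--Serre gives (b). You bypass that entire first stage: by working with the image $\mathcal{I}_\infty$ of $\lambda_\infty^\pm$ rather than the full local term, you need only the surjectivity of $\lambda_\infty^{H,\pm}$ (Proposition~\ref{prop:surjlambdaH}), together with the observation that a map surjecting onto $\bigl(\bigoplus_{v\in S}J_v^\pm(E/\Finf)\bigr)^H$ while factoring through $\mathcal{I}_\infty^H$ forces the two subgroups to coincide and the cokernel to vanish. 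This is a genuine economy: you avoid Kim's $\Zp^d$-control theorem and Nakayama's lemma over $\Lambda(\Omega)$ altogether, using only cyclotomic-level inputs. You are also more careful than the paper on one point: since $H\cong\Zp^{d-1}$ may have $p$-cohomological dimension greater than $1$, the vanishing of $H^1\bigl(H,H^1(F_S/\Finf,\Ep)\bigr)$ cannot be quoted verbatim from the proof of Proposition~\ref{prop:equiv} (which uses $\mathrm{cd}_p(\Gamma)=1$); one needs $\Ep(\Finf)=0$ (Lemmas~\ref{lem:notor} and \ref{lem:betainfty}) to kill the corner term $H^3(H,\Ep(\Finf))$, exactly as you do. What the paper's longer route buys is the stronger intermediate facts --- surjectivity of $\lambda_\infty^\pm$ over $\Finf$ and $H^2(F_S/\Finf,\Ep)=0$ --- which are reused afterwards (notably in Proposition~\ref{prop:highervanishes}); with your proof of the corollary, those facts would still need to be established separately later.
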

\begin{proof}
Recall from Remark~\ref{rk:SameSel} that $\sel_p^\pm(E/F)=\sel_p(E/F)$. Thus, by Kim's control theorem (see Theorem~1.1 of \cite{Kim}; we remark that the hypothesis on $4\nmid[F_v:\Qp]$ in loc. cit. can be removed thanks to the work of Kitajima-Otsuki \cite{K-O}), $\sel_p^\pm(E/\Finf)^\Omega$ is finite.  By Nakayama's lemma (see \cite{BH}),   $\WH{\sel_p^\pm(E/F_\infty)}$ is a torsion $\Lambda(\Omega)$-module.
Thus, the same proof of Proposition~\ref{prop:surj-pm} shows that the map
\[
\lambda_\infty^\pm:H^1(F_S/\Finf,\Ep)\rightarrow \bigoplus_{v\in S}J_v^\pm(E/\Finf)
\]
is surjective and $H^2(F_S/\Finf,\Ep)=0$. 

Similarly, Kobayashi's control theorem \cite[Theorem~9.3]{Ko}  tells us that $\WH{\sel_p^\pm(E/\fcyc)}$ is $\Lambda(\Gamma)$-torsion. Thus, it follows from Proposition~\ref{prop:surjlambdaH} that $\lambda_\infty^{H,\pm}$ in \eqref{fundinf} is surjective. Therefore, the proof of Proposition~\ref{prop:equiv} gives
\[
H^1(H,\sel_p^\pm(E/\Finf))=0.
\]
The result now follows from the Hochschild-Serre spectral sequence
\[
H^m(\Gamma, H^n(H,\sel_p^\pm(E/\Finf))\Rightarrow H^{m+n}(\Omega,\sel_p^\pm(E/\Finf)).
\]
\end{proof}

\begin{corollary}
Assume that $E$ has supersingular reduction at every prime above $p$ and that $\sel_p(E/F)$ is finite. Then 
$$H^1(\Omega,\sel_p^\pm(E/\Finf))=0.$$
\end{corollary}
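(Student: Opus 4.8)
The plan is to reduce the $\Omega$-cohomology to the $\Gamma$-cohomology of the cyclotomic Selmer group and then exploit the vanishing of $\coker\lambda$. First I would record the consequences of the standing hypothesis that $\sel_p(E/F)$ is finite: it forces both $E(F)$ and $\Sha(E/F)(p)$ to be finite, so that $H^2(F_S/F,\Ep)=0$ by the Remark following \eqref{WLC}; moreover, by the control theorem (the Remark after Proposition~\ref{prop:surj-pm}) the module $\WH{\sel_p^\pm(E/\fcyc)}$ is $\Lambda(\Gamma)$-torsion, whence $\lambda_\cyc^\pm$ is surjective by Proposition~\ref{prop:surj-pm}. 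These inputs place us in the situation of Corollary~\ref{cor:H1Omega}, whose part (b) yields $H^1(\Omega,\sel_p^\pm(E/\Finf))=H^1(\Gamma,\sel_p^\pm(E/\Finf)^H)$, so it suffices to treat the right-hand side.

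Next I would identify $\sel_p^\pm(E/\Finf)^H$ with $\sel_p^\pm(E/\fcyc)$ using the fundamental diagram \eqref{fundinf}. Since $E$ is supersingular at every prime above $p$, every $v\in S$ with $v\mid p$ lies in $\ssF$, so Proposition~\ref{prop:gamma-ss} shows that $\gamma_{v,\infty}^\pm$ is an isomorphism for such $v$, while Lemma~\ref{lem:gamma-not-p} gives the same conclusion for $v\nmid p$; in particular $\ker\gamma_\infty^\pm=0$. Combined with the surjectivity of $\lambda_\cyc^\pm$ and with Lemma~\ref{lem:betainfty}, the Corollary immediately following Lemma~\ref{lem:betainfty} shows that $\alpha_\infty^\pm$ is injective with cokernel isomorphic to $\ker\gamma_\infty^\pm=0$, hence an isomorphism $\sel_p^\pm(E/\fcyc)\xrightarrow{\sim}\sel_p^\pm(E/\Finf)^H$. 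Consequently $H^1(\Omega,\sel_p^\pm(E/\Finf))\cong H^1(\Gamma,\sel_p^\pm(E/\fcyc))$, reducing the claim to the vanishing of this last group.

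For the final step I would verify condition (ii) of Proposition~\ref{prop:equivpm} by establishing its equivalent condition (iii), the surjectivity of $\delta^\pm\colon\ker\gamma^\pm\to\coker\lambda$, and in fact I would show $\coker\lambda=0$. By (the proof of) Corollary~\ref{cor:injectdeltadual} one has $\coker\lambda\cong\WH{E(F)^*}$ with $E(F)^*=\varprojlim_n E(F)/p^nE(F)=E(F)\otimes\Zp$. Since $p$ splits completely, hence is unramified, in $F$, the Remark after Lemma~\ref{lem:beta} gives $\Ep(\fcyc)=0$, so in particular $E(F)$ has trivial $p$-primary part; together with the vanishing of the Mordell--Weil rank coming from the finiteness of $\sel_p(E/F)$, this forces $E(F)\otimes\Zp=0$, i.e. $\coker\lambda=0$. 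Then $\delta^\pm$ is vacuously surjective, so Proposition~\ref{prop:equivpm} yields $H^1(\Gamma,\sel_p^\pm(E/\fcyc))=0$ and the corollary follows.

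I expect the main obstacle to be this last paragraph: one must check carefully that the bottom row of \eqref{fundpm} is the \emph{unsigned} map $\lambda$, so that the identification $\coker\lambda\cong\WH{E(F)^*}$ genuinely applies, and that the supersingular hypothesis at $p$ (via $\Ep(\fcyc)=0$) together with the finiteness of $\sel_p(E/F)$ kills both the rank and the $p$-torsion of $E(F)$. By contrast, the reduction via Corollary~\ref{cor:H1Omega} and the identification through $\alpha_\infty^\pm$ are essentially formal diagram chases.
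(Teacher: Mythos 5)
Your proof is correct, and its skeleton is the same as the paper's: both reduce the statement, via Corollary~\ref{cor:H1Omega}(b) and the isomorphism $\alpha_\infty^\pm$ in \eqref{fundinf} (which you justify through the snake-lemma corollary following Lemma~\ref{lem:betainfty}, while the paper invokes Lemma~\ref{lem:betainfty}, Proposition~\ref{prop:gamma-ss} and Lemma~\ref{lem:gamma-not-p} directly), to the vanishing of $H^1(\Gamma,\sel_p^\pm(E/\fcyc))$, which is then deduced from Proposition~\ref{prop:equivpm}. The real difference lies in the last step. The paper simply writes that this group ``is zero by Proposition~\ref{prop:equivpm}''; but that proposition is only an equivalence of three conditions, so one must still exhibit one of them. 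You do exactly this: after checking its hypotheses (surjectivity of $\lambda_\cyc^\pm$ via the control theorem and Proposition~\ref{prop:surj-pm}, and $H^2(F_S/F,\Ep)=0$ from the finiteness of $\sel_p(E/F)$), you verify condition (iii) by showing $\coker\lambda=0$ outright: by the proof of Corollary~\ref{cor:injectdeltadual}, $\coker\lambda\cong\WH{E(F)^*}$, and $E(F)^*$ vanishes because finiteness of $\sel_p(E/F)$ kills the Mordell--Weil rank, while $\Ep(F)\subset\Ep(\fcyc)=0$ (supersingular reduction together with $p$ unramified, indeed split, in $F$) kills the $p$-torsion. Your cautionary remark that the bottom row of \eqref{fundpm} carries the \emph{unsigned} map $\lambda$, so that the unsigned identification of $\coker\lambda$ genuinely applies, is precisely the right point to flag. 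In short, yours is not a different route but a completion of the paper's: it supplies the verification of one of the equivalent conditions that the paper's citation of Proposition~\ref{prop:equivpm} leaves implicit, and this extra content is what makes the final step airtight.
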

\begin{proof}
It follows from Lemma~\ref{lem:betainfty}, Proposition~\ref{prop:gamma-ss} and Lemma~\ref{lem:gamma-not-p} that the map $\alpha_\infty^\pm$ in \eqref{fundinf} is an isomorphism. Therefore, Corollary~\ref{cor:H1Omega}(b) says that
\[
H^1(\Omega,\sel_p^\pm(E/\Finf))=H^1(\Gamma,\sel_p^\pm(E/\fcyc)).
\]
This is zero by Proposition~\ref{prop:equivpm}.
\end{proof}

\begin{proposition}\label{prop:highervanishes}
Assume that $E$ has supersingular reduction at every prime above $p$ and that  ${\sel_p(E/F)}$ is finite. Then for $i\ge 2$, 
\[
H^i(\Omega,\sel_p^\pm(E/\Finf))=0.
\]
\end{proposition}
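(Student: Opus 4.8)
The plan is to reduce the statement to the vanishing of $H^n(H,M)$ for all $n\ge1$, where $M:=\sel_p^\pm(E/\Finf)$ and $H=\Gal(\Finf/\fcyc)$. Since $\Gamma\cong\Omega/H$ has $p$-cohomological dimension one, in the Hochschild--Serre spectral sequence
\[
E_2^{m,n}=H^m\bigl(\Gamma,H^n(H,M)\bigr)\Longrightarrow H^{m+n}(\Omega,M)
\]
one has $E_2^{m,n}=0$ whenever $m\ge2$. Consequently, if $H^n(H,M)=0$ for every $n\ge1$, the spectral sequence degenerates and yields $H^i(\Omega,M)\cong H^i(\Gamma,M^H)$, which vanishes for $i\ge2$ because $\mathrm{cd}_p(\Gamma)=1$. (When $d=1$ this is vacuous, as $H$ is then trivial.) So the whole problem is to prove $H^n(H,M)=0$ for $n\ge1$; the case $n=1$ is Corollary~\ref{cor:H1Omega}(a), and it remains to treat $n\ge2$.

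For $n\ge2$ I would exploit the defining short exact sequence of the signed Selmer group. The proof of Corollary~\ref{cor:H1Omega} shows that $\lambda_\infty^\pm$ is surjective, so there is an exact sequence of discrete $\Omega$-modules
\[
0\longrightarrow M\longrightarrow H^1(F_S/\Finf,\Ep)\overset{\lambda_\infty^\pm}{\longrightarrow}\bigoplus_{v\in S}J_v^\pm(E/\Finf)\longrightarrow0.
\]
Its long exact sequence of $H$-cohomology sandwiches $H^n(H,M)$ between $H^{n-1}\bigl(H,\bigoplus_{v}J_v^\pm(E/\Finf)\bigr)$ and $H^n\bigl(H,H^1(F_S/\Finf,\Ep)\bigr)$. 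Hence $H^n(H,M)=0$ for $n\ge2$ will follow from two claims: (A) $H^m\bigl(H,H^1(F_S/\Finf,\Ep)\bigr)=0$ for $m\ge1$, and (B) $H^m\bigl(H,\bigoplus_{v\in S}J_v^\pm(E/\Finf)\bigr)=0$ for $m\ge1$.

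Claim (A) I would obtain from the Hochschild--Serre spectral sequence for $1\to\Gal(F_S/\Finf)\to\Gal(F_S/\fcyc)\to H\to1$. As $\Ep(\Finf)=0$ by Lemma~\ref{lem:betainfty}, $H^2(F_S/\Finf,\Ep)=0$ by the proof of Corollary~\ref{cor:H1Omega}, and all cohomology in degree $\ge3$ vanishes by $\mathrm{cd}_p\le2$, only the row $n=1$ is nonzero; the spectral sequence therefore gives $H^m\bigl(H,H^1(F_S/\Finf,\Ep)\bigr)\cong H^{m+1}(F_S/\fcyc,\Ep)$. For $m\ge1$ the target vanishes: $H^2(F_S/\fcyc,\Ep)=0$ because $\WH{\sel_p^\pm(E/\fcyc)}$ is $\Lambda(\Gamma)$-torsion (Kobayashi's control theorem together with Proposition~\ref{prop:surj-pm}), and higher degrees vanish by cohomological dimension.

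Claim (B), which I expect to be the crux, is purely local. Writing the places $w$ of $\Finf$ above a given $v\in S$ in $H$-orbits indexed by the places $\nu$ of $\fcyc$ above $v$, Shapiro's lemma reduces the computation to $\bigoplus_{\nu\mid v}H^m(H_w,D_w)$, where $H_w=\Gal(F_{\infty,w}/F_{\cyc,\nu})$ is the decomposition group and $D_w$ is the local quotient at $w$. For $v\nmid p$ the group $H_w$ is trivial, so these terms vanish for $m\ge1$. For $v\mid p$---which under the hypothesis that $E$ is supersingular at every prime above $p$ means $v\in\ssF$---one has $H_w\cong\Zp$, and Proposition~\ref{prop:free-quotient} tells us that $\WH{D_w}$ is free over $\Lambda(\Gal(F_{v,\cyc}^\ur/F_v))$. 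The subtle point is that this does not make $\WH{D_w}$ free over the smaller algebra $\Lambda(H_w)$; what saves the argument is that $\Lambda(\Gal(F_{v,\cyc}^\ur/F_v))$ is a power series ring over $\Lambda(H_w)$, hence flat, so $\WH{D_w}$ is $\Lambda(H_w)$-flat. Via the duality $\WH{H^m(H_w,D_w)}\cong\mathrm{Tor}_m^{\Lambda(H_w)}(\Zp,\WH{D_w})$ this forces $H^m(H_w,D_w)=0$ for $m\ge1$ (and for $m\ge2$ it is automatic from $\mathrm{cd}_p(H_w)=1$). This establishes Claim (B) and, with it, the proposition.
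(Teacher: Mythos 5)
Your argument is correct, but it is organized genuinely differently from the paper's, and the difference is worth recording. The paper never passes through $H$-cohomology at all: it applies the long exact sequence of $\Omega$-cohomology directly to $0\to\sel_p^\pm(E/\Finf)\to H^1(F_S/\Finf,\Ep)\to\bigoplus_{v\in S}J_v^\pm(E/\Finf)\to0$, reducing the proposition to (a) $H^i(\Omega,H^1(F_S/\Finf,\Ep))=0$ for $i\ge2$, proved via the spectral sequence $H^i(\Omega,H^j(F_S/\Finf,\Ep))\Rightarrow H^{i+j}(F_S/F,\Ep)$ and $\mathrm{cd}_p\,\Gal(F_S/F)=2$, and (b) $H^i(\Omega,J_v^\pm(E/\Finf))=0$ for $i\ge1$. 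For $v\in\ssF$, (b) is immediate from Proposition~\ref{prop:free-quotient}: the decomposition group of $w$ in $\Omega$ is the \emph{full} group $\Gal(F_{v,\cyc}^\ur/F_v)$ over which cofreeness is asserted, so the local module is coinduced, hence $\Omega$-acyclic by Shapiro, and no flatness argument is needed. Your route---Hochschild--Serre reduction to $H^n(H,M)=0$ for $n\ge1$, quoting Corollary~\ref{cor:H1Omega}(a) for $n=1$, running the global spectral sequence over $\fcyc$ rather than over $F$, and doing the local computation over $H_w$ rather than $\Omega_w$---creates exactly the subtlety you flag, since Proposition~\ref{prop:free-quotient} only gives freeness over the larger algebra. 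Your resolution is sound: $\Lambda(\Gal(F_{v,\cyc}^\ur/F_v))$ is a power series ring over $\Lambda(H_w)$, hence flat, so $\WH{D_w}$ is $\Lambda(H_w)$-flat and $\WH{H^m(H_w,D_w)}\cong\mathrm{Tor}_m^{\Lambda(H_w)}(\Zp,\WH{D_w})=0$ for $m\ge1$; equivalently, one can note that the restriction of a coinduced module to a closed subgroup is again coinduced. The trade-off: your version establishes the stronger statements $H^n(H,\sel_p^\pm(E/\Finf))=0$ for all $n\ge1$ and $H^i(\Omega,\sel_p^\pm(E/\Finf))\cong H^i(\Gamma,\sel_p^\pm(E/\Finf)^H)$ in every degree, whereas the paper's direct computation over $\Omega$ is shorter and sidesteps the restriction/flatness issue entirely.
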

\begin{proof}
It is enough to show that
\begin{itemize}
\item[(a)] $H^i(\Omega,H^1_S(\Finf,\Ep))=0$ for $i\ge2$;
\item[(b)] $H^i(\Omega,J_v^\pm(E/\Finf))=0$ for $i\ge1$ and $v\in S$.
\end{itemize}
Since $p$ is odd, the cohomological dimension of $\Gal(F_S/F)$ is $2$. Therefore, (a) follows  from $H^2(F_S/\fcyc,\Ep)=0$ (which is a consequence of Proposition~\ref{prop:surj-pm}) and the spectral sequence
\[
H^i\left(\Omega,H^j(F_S/\Finf,\Ep)\right)\Rightarrow H^{i+j}(F_S/F,\Ep).
\]

For (b), consider the following two cases:
\begin{itemize}
\item[(b1)] $v\nmid p$;
\item[(b2)] $v\in \ssF$.
\end{itemize}
For (b1), recall from the proof of Lemma~\ref{lem:gamma-not-p} that the decomposition group of any place not dividing $p$ inside $H$ is trivial. Hence,
\[
H^1(\Omega, J_v^\pm(E/\Finf))=H^1(\Gamma,J_v^\pm(E/\fcyc))=\bigoplus_{w|v}H^1(\Gamma,H^1(F_{\cyc,w},\Ep))=0
\]
by the Hochschild-Serre spectral sequence. For (b2), the result follows from the co-freeness of $J_v^\pm(E/\Finf)$ as given by Proposition~\ref{prop:free-quotient}.
\end{proof}

\begin{theorem}\label{thm:ES}
Suppose that $E$ has supersingular reduction at every prime above $p$ and that $\sel_p(E/F)$ is finite. Then $\chi(\Omega, \sel_p^\pm(E/\Finf))$ is well-defined and equals $$\chi(\Gamma,\sel_p^\pm(E/\fcyc))=\#\sel_p(E/F)\times \prod_v c_v.$$
\end{theorem}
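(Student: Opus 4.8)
The plan is to reduce the computation of both Euler characteristics to counting the orders of invariant submodules, exploiting that under the running hypotheses all higher Galois cohomology of the relevant Selmer groups vanishes. Write $M=\sel_p^\pm(E/\Finf)$ and $N=\sel_p^\pm(E/\fcyc)$. The key observation is that $\chi(\Omega,M)$ collapses to $\#M^\Omega$, that $\chi(\Gamma,N)$ collapses to $\#N^\Gamma$, and that these two invariant modules are identified by the vertical map $\alpha_\infty^\pm$ of the fundamental diagram \eqref{fundinf}. The final numerical value then comes directly from \cite[Theorem~1.2]{Ki}.

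First I would verify that $\chi(\Omega,M)$ is well-defined and compute it. Since $\Omega\cong\Zp^d$ has finite $p$-cohomological dimension $d$, it suffices to control $H^i(\Omega,M)$ for $0\le i\le d$. Kim's control theorem (\cite[Theorem~1.1]{Kim}, as used in the proof of Corollary~\ref{cor:H1Omega}) shows that $M^\Omega=H^0(\Omega,M)$ is finite; the corollary preceding Proposition~\ref{prop:highervanishes} gives $H^1(\Omega,M)=0$; and Proposition~\ref{prop:highervanishes} gives $H^i(\Omega,M)=0$ for all $i\ge2$. Hence $\chi(\Omega,M)$ is well-defined and
\[
\chi(\Omega,M)=\#H^0(\Omega,M)=\#M^\Omega.
\]

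Next I would treat the cyclotomic side and match the two answers. Since $\Gamma\cong\Zp$ has $p$-cohomological dimension one, $H^i(\Gamma,N)=0$ for all $i\ge2$; the vanishing $H^1(\Gamma,N)=0$ is already available, being precisely the input that yields $H^1(\Omega,M)=0$ in the corollary preceding Proposition~\ref{prop:highervanishes} (and ultimately coming from Proposition~\ref{prop:equivpm}), while finiteness of $N^\Gamma$ is part of \cite[Theorem~1.2]{Ki}. Thus $\chi(\Gamma,N)$ is well-defined and equals $\#N^\Gamma$. To identify the two sides I would take invariants in stages, $M^\Omega=(M^H)^\Gamma$, and invoke the isomorphism $\alpha_\infty^\pm\colon N\xrightarrow{\ \sim\ }M^H$: when $E$ is supersingular at every prime above $p$ no prime of $S$ dividing $p$ lies outside $\ssF$, so Lemma~\ref{lem:betainfty}, Proposition~\ref{prop:gamma-ss} and Lemma~\ref{lem:gamma-not-p} make $\beta_\infty$ and $\gamma_\infty^\pm$ isomorphisms; since $\lambda_\cyc^\pm$ is surjective (Proposition~\ref{prop:surj-pm}), the corollary following Lemma~\ref{lem:betainfty} then forces $\alpha_\infty^\pm$ to be an isomorphism. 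Applying $(-)^\Gamma$ gives $N^\Gamma\cong(M^H)^\Gamma=M^\Omega$, whence
\[
\chi(\Omega,M)=\#M^\Omega=\#N^\Gamma=\chi(\Gamma,N)=\#\sel_p(E/F)\times\prod_v c_v .
\]

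Finally, a word on where the difficulty lies. Essentially all of the substantive content has been front-loaded into the preceding results: the vanishing of $H^i(\Omega,M)$ for $i\ge1$ (resting on the co-freeness of $J_v^\pm(E/\Finf)$ from Proposition~\ref{prop:free-quotient} and on $H^2(F_S/\fcyc,\Ep)=0$), and the fact that $\alpha_\infty^\pm$ is an isomorphism. Consequently the proof of the theorem itself is pure assembly, and the only genuine care required is bookkeeping: checking that the single hypothesis ``$\sel_p(E/F)$ finite'' simultaneously supplies the $\Lambda(\Gamma)$-torsionness feeding Propositions~\ref{prop:surj-pm} and \ref{prop:equivpm} on the $\fcyc$-level and the $\Omega$-level control of Corollary~\ref{cor:H1Omega}, so that every Euler characteristic in sight is genuinely finite and the staged-invariants identification $M^\Omega=(M^H)^\Gamma$ is legitimate.
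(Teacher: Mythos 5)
Your proposal is correct and follows essentially the same route as the paper: vanishing of $H^i(\Omega,\sel_p^\pm(E/\Finf))$ for $i\ge1$ via Corollary~\ref{cor:H1Omega} and Proposition~\ref{prop:highervanishes}, finiteness of $H^0$ via Kim's control theorem, and the identification $H^0(\Omega,\sel_p^\pm(E/\Finf))=H^0(\Gamma,\sel_p^\pm(E/\fcyc))$ through the isomorphism $\alpha_\infty^\pm$, concluding with \cite[Theorem~1.2]{Ki}. The only (harmless) cosmetic differences are that you obtain the bijectivity of $\alpha_\infty^\pm$ via the snake-lemma corollary (hence invoking surjectivity of $\lambda_\cyc^\pm$ from Proposition~\ref{prop:surj-pm}), whereas the paper deduces it directly from $\beta_\infty$ and $\gamma_\infty^\pm$ being isomorphisms, and that you make explicit the step $\chi(\Gamma,\sel_p^\pm(E/\fcyc))=\#H^0(\Gamma,\sel_p^\pm(E/\fcyc))$ which the paper leaves implicit.
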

\begin{proof}
By Corollary~\ref{cor:H1Omega} and Proposition~\ref{prop:highervanishes}, $H^i(\Omega,\sel_p^\pm(E/\fcyc))=0$ for all $i\ge1$. The finiteness of $\sel_p(E/F)$, together with Kim's control theorem imply that $H^0(\Omega,\sel_p^\pm(E/\fcyc))$ is finite. In particular,
\[
\chi(\Omega,\sel_p^\pm(E/\Finf))=\#H^0(\Omega,\sel_p^\pm(E/\Finf)).
\]

Since all primes above $p$ are supersingular primes for $E$, Proposition~\ref{prop:gamma-ss}, Lemmas~\ref{lem:gamma-not-p} and \ref{lem:betainfty} imply that $\alpha_\infty^\pm$ in \eqref{fundinf} is an isomorphism. Therefore, 
\[
H^0(\Omega,\sel_p^\pm(E/\Finf))=H^0(\Gamma,\sel_p^\pm(E/\fcyc))
\]
as required.
\end{proof}

\section*{Acknowledgment}
We would like to thank Meng Fai Lim for helpful discussions and his comments  on an earlier version of the paper. We are also grateful to the anonymous referee for a  careful reading of our manuscript and suggestions, which led to many improvements to the paper.
This work was begun during the  PIMS Focus Period on Representations
in Arithmetic. The authors would like to thank PIMS for support and hospitality.
Both authors also gratefully acknowledge support of their respective
NSERC  Discovery Grants.

\end{document}